\tikzset{commutative diagrams/.cd,every label/.append style = {font = \normalsize}}
\numberwithin{equation}{section}
\newtheorem{thm}{Theorem}
\numberwithin{thm}{section}
\newtheorem{lem}[thm]{Lemma}
\newtheorem{prop}[thm]{Proposition}
\theoremstyle{definition}
\newtheorem{defn}[thm]{Definition}
\newtheorem{eg_no_qed}[thm]{Example}
\newenvironment{eg}[1][]{\begin{eg_no_qed}[#1]\pushQED{\qed}}{\popQED\end{eg_no_qed}}
\newtheorem{rmk}[thm]{Remark}
\theoremstyle{remark}
\newtheorem*{claimpf_no_qed}{Proof of Claim}
\renewcommand{\eqref}[1]{\hyperref[#1]{\textup{(\ref*{#1})}}}
\DeclareMathOperator{\Fl}{Fl}
\DeclareMathOperator{\GL}{GL}
\DeclareMathOperator{\Gr}{Gr}
\DeclareMathOperator{\spn}{span}
\newcommand{\transpose}[1]{{#1}{\hspace*{-0.2pt}\raisebox{4pt}{$\scriptstyle\mathsf{T}$}\hspace*{0.5pt}}}
\newcommand{\B}{\operatorname{B}}
\newcommand{\bip}[3][]{\mathsf{Q}^{#1}_{#2,#3}}
\newcommand{\Bminus}{\operatorname{B}^{-}}
\newcommand{\cell}[3][]{C^{#1}_{#2,#3}}
\newcommand{\dashedrightarrow}{\mathrel{\ThisStyle{\ooalign{$\SavedStyle\rightarrow$\cr\hfil\textcolor{white}{\rule{2\LMpt}{1\LMex}}\kern2\LMpt\hfil}}}}
\newcommand{\Diag}[1]{\hspace*{1pt}\mathsf{Diag}(#1)\hspace*{1pt}}
\renewcommand{\H}{\operatorname{T}}
\newcommand{\I}{I}
\newcommand{\N}{\operatorname{N}}
\renewcommand{\P}[2]{\operatorname{P}_{#1;\hspace*{0.5pt}#2}}
\newcommand{\PFl}[2]{\Fl_{#1;\hspace*{0.5pt}#2}}
\newcommand{\sumof}[1]{\sum\hspace*{-1pt}{#1}}
\title{On two notions of total positivity for partial flag varieties}
\author{Anthony M. Bloch}
\address{Department of Mathematics, University of Michigan}
\email{\href{mailto:abloch@umich.edu}{abloch@umich.edu}}
\author{Steven N. Karp}
\address{Department of Mathematics, University of Notre Dame}
\email{\href{mailto:skarp2@nd.edu}{skarp2@nd.edu}}
\subjclass[2020]{15B48, 14M15, 52B40, 81T60}
\thanks{A.M.B.\ was partially supported by NSF grants DMS-1613819 and DMS-2103026, and AFOSR grant FA 9550-22-1-0215. S.N.K.\ was partially supported by an NSERC postdoctoral fellowship.}
\begin{document}

\begin{abstract}
Given integers $1 \le k_1 < \cdots < k_l \le n-1$, let $\PFl{k_1, \dots, k_l}{n}$ denote the type $A$ partial flag variety consisting of all chains of subspaces $(V_{k_1} \subset \cdots \subset V_{k_l})$ inside $\mathbb{R}^n$, where each $V_k$ has dimension $k$. Lusztig (1994, 1998) introduced the totally positive part $\PFl{k_1, \dots, k_l}{n}^{>0}$ as the subset of partial flags which can be represented by a totally positive $n\times n$ matrix, and defined the totally nonnegative part $\PFl{k_1, \dots, k_l}{n}^{\ge 0}$ as the closure of $\PFl{k_1, \dots, k_l}{n}^{>0}$. On the other hand, following Postnikov (2007), we define $\PFl{k_1, \dots, k_l}{n}^{\Delta > 0}$ and $\PFl{k_1, \dots, k_l}{n}^{\Delta \ge 0}$ as the subsets of $\PFl{k_1, \dots, k_l}{n}$ where all Pl\"{u}cker coordinates are positive and nonnegative, respectively. It follows from the definitions that Lusztig's total positivity implies Pl\"{u}cker positivity, and it is natural to ask when these two notions of positivity agree. Rietsch (2009) proved that they agree in the case of the Grassmannian $\PFl{k}{n}$, and Chevalier (2011) showed that the two notions are distinct for $\PFl{1,3}{4}$. We show that in general, the two notions agree if and only if $k_1, \dots, k_l$ are consecutive integers. We give an elementary proof of this result (including for the case of Grassmannians) based on classical results in linear algebra and the theory of total positivity. We also show that the cell decomposition of $\PFl{k_1, \dots, k_l}{n}^{\ge 0}$ coincides with its matroid decomposition if and only if $k_1, \dots, k_l$ are consecutive integers, which was previously only known for complete flag varieties, Grassmannians, and $\PFl{1,3}{4}$. Finally, we determine which notions of positivity are compatible with a natural action of the cyclic group of order $n$ that rotates the index set.
\end{abstract}

\maketitle

\section{Introduction}\label{sec_introduction}

\noindent A real matrix is called {\itshape totally positive} if all of its minors (i.e.\ determinants of square submatrices) are positive. Totally positive matrices first appeared in the 1930's in work of Schoenberg \cite{schoenberg30} and Gantmakher and Krein \cite{gantmakher_krein37}. In the 1990's, Lusztig \cite{lusztig94} generalized the theory of totally positive matrices to arbitrary semisimple algebraic groups $G$ and their partial flag varieties $G/P$. These spaces have since been widely studied, with connections to representation theory \cite{lusztig94}, combinatorics \cite{postnikov07}, cluster algebras \cite{fomin_williams_zelevinsky}, high-energy physics \cite{arkani-hamed_bourjaily_cachazo_goncharov_postnikov_trnka16, arkani-hamed_bai_lam17}, mirror symmetry \cite{rietsch_williams19}, topology \cite{galashin_karp_lam22}, and many other topics. The purpose of this paper is to examine the definition of the totally positive and totally nonnegative parts of a partial flag variety $G/P$ in type $A$.

\subsection{Lusztig's total positivity vs.\ Pl\"{u}cker positivity}
Given a subset $K = \{k_1 < \cdots < k_l\} \subseteq \{1, \dots, n-1\}$, let $\PFl{K}{n}(\mathbb{R})$ denote the partial flag variety of all tuples of subspaces $(V_{k_i})_{i=1}^l$ of $\mathbb{R}^n$, where $V_{k_1} \subset \cdots \subset V_{k_l}$ and each $V_k$ has dimension $k$. Equivalently, we may view $\PFl{K}{n}(\mathbb{R})$ as a parabolic quotient of $\GL_n(\mathbb{R})$, where a matrix $g\in\GL_n(\mathbb{R})$ represents the flag $(V_{k_i})_{i=1}^l$, where $V_{k_i}$ is the span of the first $k_i$ columns of $g$. Particular cases of interest are when $K = \{1, \dots, n-1\}$, in which case $\PFl{K}{n}(\mathbb{R})$ is the {\itshape complete flag variety} $\Fl_n(\mathbb{R})$, and when $K = \{k\}$, in which case $\PFl{K}{n}(\mathbb{R})$ is the {\itshape Grassmannian} $\Gr_{k,n}(\mathbb{R})$ of all $k$-dimensional subspaces of $\mathbb{R}^n$.

Lusztig \cite{lusztig94,lusztig98} defined the {\itshape totally positive part} $\PFl{K}{n}^{>0}$ of $\PFl{K}{n}(\mathbb{R})$ to be the subset of flags which can be represented by a totally positive matrix in $\GL_n(\mathbb{R})$ (or equivalently, a totally positive lower-triangular unipotent matrix; see \cref{lusztig_definition} for further discussion). He also defined the {\itshape totally nonnegative part} $\PFl{K}{n}^{\ge 0}$ to be the closure of $\PFl{K}{n}^{>0}$. There is an alternative notion of positivity which arises from the {\itshape Pl\"{u}cker embedding} of $\PFl{K}{n}(\mathbb{R})$. Namely, we say that $(V_{k_i})_{i=1}^l$ is {\itshape Pl\"{u}cker positive} if all its Pl\"{u}cker coordinates are positive, or equivalently, if it can be represented by an element of $\GL_n(\mathbb{R})$ whose left-justified (i.e.\ initial) minors of orders $k_1, \dots, k_l$ are all positive. We similarly say that $(V_{k_i})_{i=1}^l$ is {\itshape Pl\"{u}cker nonnegative} if all its Pl\"{u}cker coordinates are nonnegative. We denote the Pl\"{u}cker-positive and Pl\"{u}cker-nonnegative parts of $\PFl{K}{n}(\mathbb{R})$ by $\PFl{K}{n}^{\Delta >0}$ and $\PFl{K}{n}^{\Delta\ge 0}$, respectively. The Pl\"{u}cker-nonnegative part $\Gr_{k,n}^{\Delta\ge 0}$ of the Grassmannian was studied by Postnikov \cite{postnikov07}, and the space $\PFl{K}{n}^{\Delta\ge 0}$ was introduced by Arkani-Hamed, Bai, and Lam \cite[Section 6.3]{arkani-hamed_bai_lam17}, who called it the {\itshape naive nonnegative part}.

We wish to emphasize that both total positivity and Pl\"{u}cker positivity are natural notions. Lusztig's total positivity is compatible with his theory of canonical bases \cite{lusztig90} and with the combinatorics of Coxeter groups \cite{lusztig94}. The space $\PFl{K}{n}^{\ge 0}$ can be decomposed into cells \cite{lusztig94,rietsch99}, each of which admits an explicit parametrization \cite{marsh_rietsch04}, and the cell decomposition forms a regular CW complex \cite{galashin_karp_lam22}. On the other hand, Pl\"{u}cker positivity is more concrete, and leads to connections with matroid theory \cite{ardila_rincon_williams17} and tropical geometry \cite{speyer_williams05}. It also arises in the definition of loop amplituhedra \cite{arkani-hamed_trnka14,bai_he_lam16}, which are spaces appearing in the physics of scattering amplitudes; for example, a {\itshape $1$-loop amplituhedron} is a certain projection of the space $\PFl{k,k+2}{n}^{\Delta \ge 0}$. The construction of loop amplituhedra is incompatible with Lusztig's total positivity, due to the presence of a cyclic symmetry among the particles in the scattering amplitude, as we explain in \cref{sec_cyclic_intro}.

It follows from the definitions that $\PFl{K}{n}^{>0}\subseteq\PFl{K}{n}^{\Delta >0}$ and $\PFl{K}{n}^{\ge 0}\subseteq\PFl{K}{n}^{\Delta\ge 0}$, and it is natural to ask when equality holds. This question has been explored previously in special cases, as we discuss in \cref{sec_history}. We answer this question in general:
\begin{thm}\label{converse}
Let $n\in\mathbb{N}$ and $K\subseteq \{1, \dots, n-1\}$. Then the following are equivalent:
\begin{enumerate}[label=(\roman*), leftmargin=*, itemsep=2pt]
\item\label{converse_tp} $\PFl{K}{n}^{>0} = \PFl{K}{n}^{\Delta > 0}$;
\item\label{converse_tnn} $\PFl{K}{n}^{\ge 0} = \PFl{K}{n}^{\Delta \ge 0}$; and
\item\label{converse_consecutive} the set $K$ consists of consecutive integers.
\end{enumerate}

\end{thm}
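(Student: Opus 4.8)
The plan is to prove the chain of implications \ref{converse_consecutive}$\Rightarrow$\ref{converse_tp}$\Rightarrow$\ref{converse_tnn}$\Rightarrow$\ref{converse_consecutive}, where the first implication is the substantive positive result and the last is the source of counterexamples. I would organize the argument around a single reduction: if $K = \{k_1 < \cdots < k_l\}$ consists of consecutive integers $k_1, k_1+1, \dots, k_1 + l - 1 = k_l$, then a flag $(V_{k_i})$ is determined by the pair $(V_{k_1}, V_{k_l})$ together with a complete flag in the $l$-dimensional quotient $V_{k_l}/V_{k_1}$; equivalently, after fixing suitable coordinates, the consecutive-block case reduces to understanding complete flags $\Fl_l$ sitting between a fixed pair of nested subspaces. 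The key classical input is that for the \emph{complete} flag variety, Lusztig positivity and Pl\"{u}cker positivity coincide — this is the theorem of Rietsch-type already available in the literature for $\Fl_n$ — and for the \emph{Grassmannian}, Rietsch's theorem (cited in the excerpt) gives the same. So the first implication amounts to gluing these two known cases along the consecutive structure.

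\medskip
\noindent\textbf{Step 1 (\ref{converse_consecutive}$\Rightarrow$\ref{converse_tp}).} Assume $K$ is a block of consecutive integers. Given a Pl\"{u}cker-positive flag $(V_{k_i})_{i=1}^l \in \PFl{K}{n}^{\Delta > 0}$, I would first observe that its "endpoints" $V_{k_1} \in \Gr_{k_1,n}$ and $V_{k_l} \in \Gr_{k_l,n}$ are each Pl\"{u}cker-positive in their respective Grassmannians (the Pl\"{u}cker coordinates of the flag restricted to a single $k_i$ are exactly the Grassmannian Pl\"{u}cker coordinates of $V_{k_i}$). By Rietsch's theorem for Grassmannians, there is a totally positive lower-triangular unipotent matrix $u$ with $u \cdot \Gr_{k_1} = V_{k_1}$ and — here is where consecutiveness enters — I want a single $u$ doing this simultaneously for all $k_i$ in the block. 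The cleanest route: use the explicit parametrization of the top cell of $\Gr_{k,n}^{\ge 0}$ and the fact that for consecutive $k$'s, the "positive part" of the relative flag variety $\Fl(V_{k_l}/V_{k_1})$ is controlled by an $l \times l$ totally positive minor block. Concretely, after acting by a totally positive unipotent to move $V_{k_1}$ and $V_{k_l}$ to standard coordinate subspaces, the remaining data is a complete flag in $\mathbb{R}^l$; Pl\"{u}cker positivity of the original flag translates (using the consecutiveness to identify the relevant Pl\"{u}cker coordinates with the full set of minors of an $l\times l$ matrix) into Pl\"{u}cker positivity of that complete flag, whence by the complete-flag case it is Lusztig-positive, and one lifts the totally positive unipotent back up. I would want a lemma stating that total positivity is preserved under this embedding $\GL_l \hookrightarrow \GL_n$ as a suitable block together with conjugation by the unipotent that was used to normalize the endpoints — this is a standard fact about block triangular totally positive matrices (Cauchy–Binet), but it needs to be stated carefully.

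\medskip
\noindent\textbf{Step 2 (\ref{converse_tp}$\Rightarrow$\ref{converse_tnn}).} This should be essentially formal. Both $\PFl{K}{n}^{\ge 0}$ and $\PFl{K}{n}^{\Delta \ge 0}$ are closures of their respective open parts: $\PFl{K}{n}^{\ge 0}$ is the closure of $\PFl{K}{n}^{>0}$ by Lusztig's definition, and $\PFl{K}{n}^{\Delta \ge 0}$ is the closure of $\PFl{K}{n}^{\Delta > 0}$ because nonnegativity of all Pl\"{u}cker coordinates together with the projective nature of the embedding makes $\PFl{K}{n}^{\Delta > 0}$ dense in $\PFl{K}{n}^{\Delta\ge 0}$ (a point with some Pl\"{u}cker coordinate zero is a limit of points with all of them positive — one perturbs within the positroid stratification; for Grassmannians this is Postnikov's, and it passes to the consecutive-flag case). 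Since $\PFl{K}{n}^{>0} = \PFl{K}{n}^{\Delta > 0}$ by Step 1, taking closures gives equality of the nonnegative parts. (One should note that $\PFl{K}{n}^{\ge 0}\subseteq \PFl{K}{n}^{\Delta\ge 0}$ always, so only the density of the positive parts in each is needed.)

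\medskip
\noindent\textbf{Step 3 (not \ref{converse_consecutive} $\Rightarrow$ not \ref{converse_tp}).} Here I would produce an explicit Pl\"{u}cker-positive flag that is not Lusztig-positive whenever $K$ has a gap. Following Chevalier's $\PFl{1,3}{4}$ example, the mechanism is: Lusztig-positivity of a flag forces inequalities among Pl\"{u}cker coordinates beyond mere positivity (e.g.\ Plücker-type quadratic relations become strict inequalities in a fixed direction), coming from the fact that a totally positive unipotent's minors satisfy more than positivity — they satisfy the determinantal inequalities of total positivity (e.g.\ the "fundamental inequalities" / log-concavity relations of Fekete–Gantmakher–Krein). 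When $K$ has a gap $k_{i+1} > k_i + 1$, one has room to choose the $(k_i)$- and $(k_{i+1})$-Plücker coordinates independently enough to violate such an inequality while keeping everything positive. The plan: pick the smallest gap, reduce (by the endpoint normalization of Step 1, run in reverse) to a $\PFl{a, a+2}{m}$-type situation — which is exactly the $1$-loop geometry mentioned in the introduction — and there exhibit a matrix with all initial minors of orders $a$ and $a+2$ positive but such that no totally positive $g$ represents the flag, by showing the required total-positivity inequalities among those minors are incompatible with the chosen values. I'd present one clean parametrized family and verify the inequality fails, generalizing Chevalier's $4\times 4$ computation; the padding to arbitrary $n$ and arbitrary $K$ with a gap is then routine (embed the bad $\PFl{a,a+2}{m}$ flag into $\PFl{K}{n}$ by taking direct sums with a Lusztig-positive complete flag on the complementary consecutive blocks, and check Pl\"{u}cker coordinates multiply).

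\medskip
\noindent\textbf{Main obstacle.} The hard part is Step 1, specifically the simultaneity: reducing from "each endpoint is individually Lusztig-positive" to "one totally positive unipotent normalizes the whole block at once," and correctly identifying which subset of $\binom{[n]}{k_i}$-indexed Plücker coordinates becomes the full minor set of the $l\times l$ relative flag matrix. Getting the combinatorics of this identification right — and checking that total positivity really is inherited under the block embedding plus unipotent conjugation (not just for the top minor, but all minors, via Cauchy–Binet applied to the triangular block structure) — is where the real work lies. Everything else is either formal closure-taking (Step 2) or an explicit, if fiddly, counterexample computation (Step 3).
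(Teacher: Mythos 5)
Your overall logical structure does not close. You announce the cycle \ref{converse_consecutive}$\Rightarrow$\ref{converse_tp}$\Rightarrow$\ref{converse_tnn}$\Rightarrow$\ref{converse_consecutive}, but Step 3 proves not-\ref{converse_consecutive}$\Rightarrow$not-\ref{converse_tp}, i.e.\ \ref{converse_tp}$\Rightarrow$\ref{converse_consecutive}; together with Steps 1--2 this yields \ref{converse_tp}$\Leftrightarrow$\ref{converse_consecutive} and \ref{converse_tp}$\Rightarrow$\ref{converse_tnn}, but nothing brings \ref{converse_tnn} back to the other two, so the three statements are not shown equivalent. The paper supplies the missing link in \cref{topology}: $\PFl{K}{n}^{>0}$ is the \emph{interior} of $\PFl{K}{n}^{\ge 0}$ and $\PFl{K}{n}^{\Delta>0}$ the interior of $\PFl{K}{n}^{\Delta\ge 0}$ (via the perturbation of \cref{identity_perturbation} and \cref{matrix_action}), giving \ref{converse_tp}$\Leftrightarrow$\ref{converse_tnn} outright. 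Moreover, even for the direction you do attempt, your general-$(n,K)$ counterexample would fail as described: padding a bad $\PFl{a,a+2}{m}$ flag by direct sums creates many vanishing Pl\"ucker coordinates, so the result is not Pl\"ucker positive; and to contradict \ref{converse_tnn} one must exhibit a flag outside the \emph{closed} set $\PFl{K}{n}^{\ge 0}$, which is strictly stronger than ``not representable by a totally positive matrix'' and requires an argument like the paper's (\cref{tnn_restriction} plus \cref{tnn_counterexample}, the $e_1+ce_n$ lemma), not just a violated determinantal inequality.

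Step 1 also has a genuine gap at its core. The normalization you propose is impossible as stated: by Cauchy--Binet, a totally positive matrix sends any Pl\"ucker-nonnegative subspace to a Pl\"ucker-\emph{positive} one, so no totally positive unipotent can carry the Pl\"ucker-positive $V_{k_1},V_{k_l}$ onto coordinate subspaces; you would have to act by the inverse of such a matrix, and then Pl\"ucker positivity of the induced relative flag no longer follows from that of the original flag. The ``simultaneity'' you defer is essentially the theorem itself: separate Lusztig positivity of the two endpoints only places the pair in $\PFl{k_1,k_l}{n}^{\Delta>0}$, which is strictly larger than $\PFl{k_1,k_l}{n}^{>0}$ whenever $k_l-k_1\ge 2$ --- exactly the gap phenomenon --- so the intermediate subspaces must enter in an essential way that your sketch never makes precise. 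The paper sidesteps all of this by going up rather than down: it proves $\Fl_n^{>0}=\Fl_n^{\Delta>0}$ directly and elementarily (\cref{converse_complete}), by taking a lower-triangular representative $A$ with positive left-justified minors and checking via Cauchy--Binet that $g=A\Diag{t^{n-1},\dots,t,1}\transpose{A}$ is totally positive for large $t$; then, for consecutive $K$, it extends a Pl\"ucker-positive partial flag one step at a time to a Pl\"ucker-positive complete flag using Fekete's criterion and Grassmannian duality (\cref{tp_extension}, via \cref{fekete} and \cref{perpendicular_pluckers}), and projects back down. This also makes the proof self-contained, whereas your plan black-boxes Rietsch's Grassmannian theorem and the complete-flag case.
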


We give an elementary proof of \cref{converse}, using classical results in linear algebra and the theory of total positivity. In particular, our proof does not rely on any previously established special cases or on the cell decomposition of $\PFl{K}{n}^{\ge 0}$.

\subsection{Cell decomposition vs.\ matroid decomposition}
Lusztig \cite[Remark 8.15]{lusztig94} introduced a decomposition of $\PFl{K}{n}^{\ge 0}$, which Rietsch \cite{rietsch99} showed is a cell decomposition (i.e.\ each stratum is homeomorphic to an open ball). It is the intersection of the {\itshape projected Richardson stratification} \cite{kazhdan_lusztig79} with $\PFl{K}{n}^{\ge 0}$. There is another natural stratification of $\PFl{K}{n}(\mathbb{R})$, introduced by Gelfand and Serganova \cite{gel'fand_serganova87}, which is the common refinement of the vanishing and nonvanishing sets of all Pl\"{u}cker coordinates. We call this the {\itshape matroid decomposition}, since each stratum is labeled by a tuple of matroids on the ground set $\{1, \dots, n\}$ (or alternatively, a Coxeter matroid).

Postnikov \cite{postnikov07} studied the matroid decomposition of the Pl\"{u}cker-nonnegative part of $\Gr_{k,n}(\mathbb{R})$, which he called the {\itshape positroid decomposition}. He showed that it forms a cell decomposition \cite[Theorem 3.5]{postnikov07}, and that it coincides with the decomposition of Lusztig and Rietsch \cite[Theorem 3.8]{postnikov07} (see \cref{sec_history} for further discussion). The topology of this decomposition presents a stark contrast to the matroid decomposition of all of $\Gr_{k,n}(\mathbb{R})$, which exhibits a phenomenon known as {\itshape Mn\"{e}v universality} \cite{mnev88}. In general, Tsukerman and Williams \cite[Section 7]{tsukerman_williams15} showed that the cell decomposition of $\PFl{K}{n}^{\ge 0}$ is a refinement of the matroid decomposition. They also showed that the two decompositions coincide for complete flag varieties, but differ for $\PFl{1,3}{4}^{\ge 0}$. We determine in general when the two decompositions are equal:
\begin{thm}\label{decompositions}
Let $n\in\mathbb{N}$ and $K\subseteq \{1, \dots, n-1\}$. Then the cell decomposition of $\PFl{K}{n}^{\ge 0}$ coincides with the matroid decomposition if and only if $K$ consists of consecutive integers.
\end{thm}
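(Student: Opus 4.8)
The plan is to deduce \cref{decompositions} from \cref{converse} together with the theorem of Tsukerman and Williams that the cell decomposition of $\PFl{K}{n}^{\ge 0}$ always refines its matroid decomposition, and Postnikov's theorem that the two decompositions coincide for Grassmannians. Suppose first that $K$ is not consecutive, so that by \cref{converse} the inclusion $\PFl{K}{n}^{>0} \subseteq \PFl{K}{n}^{\Delta > 0}$ is strict. The set $\PFl{K}{n}^{>0}$ is open in $\PFl{K}{n}(\mathbb{R})$ --- being the image of the open totally positive part $\Fl_n^{>0}$ of the complete flag variety (open because, inside the big Bruhat cell, it is cut out by finitely many strict polynomial inequalities) under the open projection to $\PFl{K}{n}(\mathbb{R})$ --- hence open in $\PFl{K}{n}^{\Delta > 0}$; whereas $\PFl{K}{n}^{\Delta > 0} \cap \PFl{K}{n}^{\ge 0}$ is closed in $\PFl{K}{n}^{\Delta > 0}$ and contains $\PFl{K}{n}^{>0}$. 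Were these two sets equal, $\PFl{K}{n}^{>0}$ would be a nonempty clopen subset of the connected space $\PFl{K}{n}^{\Delta > 0}$, forcing $\PFl{K}{n}^{>0} = \PFl{K}{n}^{\Delta > 0}$, a contradiction; so $\PFl{K}{n}^{\Delta > 0} \cap \PFl{K}{n}^{\ge 0}$ strictly contains $\PFl{K}{n}^{>0}$. As every flag in $\PFl{K}{n}^{\ge 0}$ has nonnegative Pl\"ucker coordinates, this set is exactly the intersection of $\PFl{K}{n}^{\ge 0}$ with the matroid stratum on which no Pl\"ucker coordinate vanishes; by the Tsukerman--Williams refinement it is a union of cells, and since it properly contains the top cell $\PFl{K}{n}^{>0}$ it is a union of at least two cells, hence not a single cell. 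Thus the matroid decomposition is strictly coarser than the cell decomposition.

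Conversely, suppose $K = \{a, a+1, \dots, b\}$ is consecutive. By \cref{converse}, $\PFl{K}{n}^{\ge 0} = \PFl{K}{n}^{\Delta \ge 0}$, and since the cell decomposition refines the matroid decomposition it suffices to prove that each nonempty matroid stratum of $\PFl{K}{n}^{\ge 0}$ is a single cell --- equivalently, that distinct cells carry distinct tuples of positroids $(\operatorname{positroid}(V_k))_{k \in K}$. I would argue by induction on $b - a$. The base case $b = a$ is the Grassmannian, which is Postnikov's theorem. For the inductive step, use the projection $\PFl{a, \dots, b}{n} \to \Gr_{b,n}$, $(V_a \subset \dots \subset V_b) \mapsto V_b$, whose fibre over $V_b$ is the partial flag variety of $V_b$ with the consecutive index set $\{a, \dots, b-1\}$: when $V_b$ is confined to a positroid cell of $\Gr_{b,n}^{\ge 0}$, the flag induced on $V_b$ by the standard flag of $\mathbb{R}^n$ has a fixed shape, and --- this is the point where the consecutive case of \cref{converse} is used, to identify the induced positive structure on the fibre with both the Lusztig and the Pl\"ucker ones --- the inductive hypothesis matches the cells and matroid strata within the fibre; gluing over the positroid cells of $\Gr_{b,n}^{\ge 0}$ (the base case) gives the claim. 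In keeping with the elementary spirit of the paper, one may instead argue in the $b \times n$ matrix model: a flag in $\PFl{K}{n}$ is the filtration by row spans of the leading rows of a full-rank $b \times n$ matrix $M$, its Pl\"ucker coordinates are the leading-row minors of $M$, membership in $\PFl{K}{n}^{\ge 0}$ is by \cref{converse} the nonnegativity of all of these minors, and one shows with classical total positivity that the vanishing pattern of the minors determines the projected Richardson cell.

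The main obstacle is the consecutive direction, and within it the inductive step: one must understand the ``relative'' total positivity of a chain of subspaces inside a subspace $V_b$ that is itself pinned to a prescribed positroid position, which amounts to showing that along the boundary of each projected Richardson cell of $\PFl{K}{n}^{\ge 0}$ at least one of the positroids $\operatorname{positroid}(V_k)$ with $k \in K$ strictly drops. This is automatic for Grassmannians and for complete flag varieties, but is precisely the step where consecutivity of $K$ is essential in general; it is the technical heart of the argument, and the matrix-model approach above is the most likely route to a self-contained proof. A minor loose end is the connectedness of $\PFl{K}{n}^{\Delta > 0}$ used in the other direction, which admits a short direct proof or can be bypassed by producing, from the proof of \cref{converse}, an explicit Pl\"ucker-positive flag in $\PFl{K}{n}^{\ge 0} \setminus \PFl{K}{n}^{>0}$.
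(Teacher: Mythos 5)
Your forward direction is essentially the paper's first proof of that implication: strictness of $\PFl{K}{n}^{>0}\subsetneq\PFl{K}{n}^{\Delta>0}$ from \cref{converse}, connectedness of $\PFl{K}{n}^{\Delta>0}$ (which the paper quotes from Rietsch, \cref{positive_connected}), and the observation that the resulting boundary point lies in the all-nonvanishing matroid stratum together with the top cell. That part is fine.

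The reverse direction, however, has a genuine gap, and it sits exactly where you flag it. Your plan is to induct on $|K|$ via the projection $\PFl{a,\dots,b}{n}\to\Gr_{b,n}$ and to ``match cells and matroid strata within the fibre,'' but the key assertions are never established: that a point $V_b$ confined to a positroid cell induces a canonical nonnegative flag structure on the fibre compatible with both the Lusztig cells and the Pl\"ucker (matroid) data, and equivalently that along the boundary of each projected Richardson cell some positroid in the tuple strictly drops --- you yourself call this ``the technical heart of the argument'' and leave it as an obstacle. This is not a routine verification: restricting Pl\"ucker coordinates to $V_b$ requires a choice of basis, positivity is not basis-independent, and the fibres of the projection restricted to $\PFl{K}{n}^{\ge 0}$ are not obviously totally nonnegative flag varieties of smaller rank; moreover the claim that the vanishing pattern of flag minors determines the projected Richardson cell is precisely the statement to be proved, so asserting it in the matrix model is circular. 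Note also that \cref{converse} alone cannot carry this direction: the paper states explicitly that it does not know how to deduce the reverse implication directly from \cref{converse}. The paper's actual route is combinatorial: by Tsukerman--Williams (\cref{decompositions_equality}), the matroid stratum containing a cell $\cell[K]{v}{w}$ is determined by the Bruhat interval $[v,w]$ modulo $W_J$, so it suffices to show that for consecutive $K$ the pair $(v,w)$ with $w\in W^J$ is recoverable from $[v,w]$ mod $W_J$; this is done by taking minima and maxima of the interval in $W^J$ (\cref{parabolic_order}) and then projecting to each Grassmannian factor via Demazure products (\cref{demazure_reduction}, \cref{reduction_to_grassmannian}), where Postnikov's theorem (\cref{decompositions_grassmannian}) finishes the argument. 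To complete your proposal you would either need to supply the missing fibrewise/boundary-drop argument in full, or replace it by some such reduction through the Bruhat-interval description of the matroid strata.
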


The forward direction of \cref{decompositions} follows from \cref{converse}, using a topological argument. However, we do not know how to deduce the reverse direction directly from \cref{converse}. Instead, we use a result of Tsukerman and Williams \cite[Theorem 7.1]{tsukerman_williams15}, which in turn builds on unpublished work of Marsh and Rietsch. It states that each cell of $\PFl{K}{n}^{\ge 0}$ is contained in a single matroid stratum, which is uniquely determined by the $0$-dimensional cells in its closure. These $0$-dimensional cells have an explicit description in terms of the Bruhat order on the symmetric group $\mathfrak{S}_n$, due to Rietsch \cite{rietsch06b}. To prove the reverse direction of \cref{decompositions}, we use the combinatorics of $\mathfrak{S}_n$ to reduce the statement to the Grassmannian case, which was proved by Postnikov as described above. Our techniques also have implications in the study of the {\itshape Bruhat interval polytopes} of Kodama and Williams \cite{kodama_williams15} and Tsukerman and Williams \cite{tsukerman_williams15}; see \cref{decompositions_remark} and \cref{minkowski_sum}.

\subsection{Cyclic symmetry}\label{sec_cyclic_intro}

For $\epsilon\in\mathbb{Z}/2\mathbb{Z}$, define the {\itshape (signed) left cyclic shift map} $\sigma_\epsilon\in\GL_n(\mathbb{R})$ by
$$
\sigma_\epsilon(v_1, \dots, v_n) := (v_2, \dots, v_n, (-1)^{\epsilon-1}v_1) \quad \text{ for all } v\in\mathbb{R}^n.
$$
Then $\sigma_\epsilon$ also acts on $\PFl{K}{n}(\mathbb{R})$. If all elements of $K$ have the same parity $\epsilon$, then by the alternating property of the determinant, $\sigma_\epsilon$ acts on Pl\"{u}cker coordinates by rotating the index set $\{1, \dots, n\}$. Therefore $\sigma_\epsilon$ preserves both $\PFl{K}{n}^{\Delta >0}$ and $\PFl{K}{n}^{\Delta\ge 0}$. It is natural to wonder whether $\sigma_\epsilon$ also preserves $\PFl{K}{n}^{>0}$ and $\PFl{K}{n}^{\ge 0}$. One motivation is that the cyclic symmetry for $\PFl{k,k+2}{n}^{\Delta \ge 0}$ is important in the definition of loop amplituhedra mentioned above, coming from a cyclic ordering on the $n$ external particles.

If $K = \{k\}$ and $k$ has parity $\epsilon$, then $\sigma_\epsilon$ preserves both $\Gr_{k,n}^{>0}$ and $\Gr_{k,n}^{\ge 0}$, using \cref{converse} for $\Gr_{k,n}(\mathbb{R})$. We show, however, that in all other cases, $\sigma_\epsilon$ does not preserve either $\PFl{K}{n}^{>0}$ or $\PFl{K}{n}^{\ge 0}$. In particular, one cannot substitute the notion of total positivity for Pl\"{u}cker positivity in the definition of loop amplituhedra.
\begin{thm}\label{cyclic}
Let $K\subseteq \{1, \dots, n-1\}$ such that $|K| \ge 2$, and let $\epsilon\in\mathbb{Z}/2\mathbb{Z}$. Then there exists $V\in\PFl{K}{n}^{>0}$ such that $\sigma_\epsilon(V)\notin\PFl{K}{n}^{\ge 0}$. In particular, $\sigma_\epsilon$ does not preserve $\PFl{K}{n}^{>0}$ or $\PFl{K}{n}^{\ge 0}$.
\end{thm}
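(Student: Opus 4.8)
The plan is to exhibit, for each $K$ with $|K|\ge 2$ and each parity $\epsilon$, an explicit flag $V\in\PFl{K}{n}^{>0}$ whose cyclic shift $\sigma_\epsilon(V)$ has a negative Plücker coordinate, hence fails to lie in $\PFl{K}{n}^{\Delta\ge 0}\supseteq\PFl{K}{n}^{\ge 0}$. First I would reduce to the two smallest indices: writing $k=k_1$ and $m=k_2$ for the two smallest elements of $K$, it suffices (by forgetting the higher subspaces, since projection $\PFl{K}{n}\to\PFl{k,m}{n}$ is cyclic-equivariant and sends totally positive flags to totally positive flags) to produce the counterexample in $\PFl{k,m}{n}$. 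So assume $K=\{k,m\}$ with $1\le k<m\le n-1$.

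Next I would take $V$ to be represented by a lower-triangular unipotent totally positive matrix $g$ chosen to be, or to approximate, a rank-one perturbation of a simple model — for instance a matrix whose columns come from a Vandermonde-type construction with a suitable sign twist, so that total positivity is automatic from the classical theory (Cref{lusztig_definition} and standard facts about Vandermonde/exponential matrices being totally positive). The key computation is then to evaluate the Plücker coordinates of $\sigma_\epsilon(V)$ in terms of those of $V$: applying $\sigma_\epsilon$ rotates the index set cyclically but introduces a factor $(-1)^{\epsilon-1}$ whenever the index $n$ (which becomes index $n$ cycling to position $n$ with the sign) "wraps around." Concretely, for an $r$-subset $S\subseteq\{1,\dots,n\}$, the Plücker coordinate $\Delta_S(\sigma_\epsilon V)$ equals $\pm\Delta_{S'}(V)$ where $S'$ is the shift of $S$, and the sign is $-(-1)^{\epsilon r}$ or similar precisely when $n\in S$ but the parity condition on $r$ fails. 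Since $K$ contains two elements $k<m$ of possibly different parities, at most one of them can match $\epsilon$, so for the other one the wrap-around sign is genuinely $-1$; choosing $g$ so that the relevant un-shifted minor is strictly positive then forces $\Delta_S(\sigma_\epsilon V)<0$.

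The main obstacle I anticipate is the bookkeeping of signs and the need to handle uniformly the case where $k$ has parity $\epsilon$ (so $\sigma_\epsilon$ does behave nicely on the order-$k$ Plücker coordinates) but $m$ does not, versus the case where neither does, versus where $m$ does but $k$ does not — in each subcase I must point to an order-$k$ or order-$m$ minor of $\sigma_\epsilon(g)$ that is negative. A clean way to organize this: show that whatever $\epsilon$ is, there is $j\in\{k,m\}$ with $j\not\equiv\epsilon$, pick the specific $j$-subset $S=\{2,3,\dots,j,n\}$ (or its shift), and verify that $\Delta_S(\sigma_\epsilon g)=-(-1)^{?}\,\Delta_{\{1,\dots,j-1,n\}}(g)$ with the overall sign negative, while $\Delta_{\{1,\dots,j-1,n\}}(g)>0$ because $g$ is totally positive and this is an initial-rows minor on consecutive-plus-last columns — or, if that particular minor is not automatically positive, perturb $g$ within the totally positive flags so that it is. A subsidiary point needing care is that we want $\sigma_\epsilon(V)\notin\PFl{K}{n}^{\ge 0}$, not merely $\neq$ totally positive; this follows immediately since $\PFl{K}{n}^{\ge 0}\subseteq\PFl{K}{n}^{\Delta\ge 0}$ and the latter is cut out by the (closed) condition that all Plücker coordinates are nonnegative, which we have explicitly violated. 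Finally I would note that the last sentence of the theorem is an immediate corollary, since $\PFl{K}{n}^{>0}$ being preserved would force its closure $\PFl{K}{n}^{\ge 0}$ to be preserved as well, contradicting what we have shown.
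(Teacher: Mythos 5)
There is a genuine gap, and it is in the central claim that drives your construction: ``Since $K$ contains two elements $k<m$ of possibly different parities, at most one of them can match $\epsilon$.'' This is false when $k\equiv m\pmod 2$. For example, take $K=\{1,3\}$, $n=4$, and $\epsilon$ odd: then every element of $K$ has parity $\epsilon$, and, as noted in the paper's introduction, $\sigma_\epsilon$ then acts on all relevant Pl\"{u}cker coordinates by a pure rotation of the index set with no sign change, so $\sigma_\epsilon$ \emph{does} preserve $\PFl{K}{n}^{\Delta>0}$ and $\PFl{K}{n}^{\Delta\ge 0}$. In that case no choice of $V\in\PFl{K}{n}^{>0}$ can produce a negative Pl\"{u}cker coordinate of $\sigma_\epsilon(V)$, so your strategy of violating Pl\"{u}cker nonnegativity cannot possibly succeed; yet this is exactly the interesting case of the theorem, whose content is that Lusztig's total positivity fails to be cyclically invariant \emph{even when} Pl\"{u}cker positivity is preserved. (Your argument is essentially fine in the complementary situation: if some $j\in K$ has $j\not\equiv\epsilon$, then for any $V\in\PFl{K}{n}^{>0}$ a wrap-around order-$j$ minor of $\sigma_\epsilon(V)$ equals $-1$ times a positive Pl\"{u}cker coordinate of $V$, and one is done. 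But the uniform reduction you propose does not cover the equal-parity case, and reducing to the two smallest elements of $K$ does not help, since they may share the parity $\epsilon$.)

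To handle the equal-parity case one needs an obstruction to membership in $\PFl{K}{n}^{\ge 0}$ that is finer than the signs of Pl\"{u}cker coordinates. The paper does this as follows: it first reduces (via \cref{identity_perturbation} and \cref{matrix_action}, letting $t\to 0$) to producing $W\in\PFl{K}{n}^{\ge 0}$ with $\sigma_\epsilon(W)\notin\PFl{K}{n}^{\ge 0}$; it then takes $W$ with $W_i=\spn(e_1+e_2,e_3,\dots,e_{i+1})$ for $i\ge k$ (and $W_i=\spn(e_3,\dots,e_{i+2})$ for $i<k$), which lies in $\PFl{K}{n}^{\ge 0}$ by \cref{converse} for the complete flag variety. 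The shifted flag $X=\sigma_\epsilon(W)$ has $X_k=\spn((-1)^{\epsilon-1}e_n+e_1,e_2,\dots,e_k)$, and if $X$ were in $\PFl{K}{n}^{\ge 0}$ it would extend to a complete totally nonnegative flag; \cref{tnn_counterexample} (a statement about nested Pl\"{u}cker-nonnegative subspaces containing $e_1+ce_n$) then forces $e_1\in X_{k+1}\subseteq X_l$, a contradiction. You would need some analogue of this second ingredient; as written, your proposal proves only the mixed-parity subcase.
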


While \cref{cyclic} does not follow directly from \cref{converse}, we use similar techniques to prove both results.

One of our initial motivations for this work was to understand which gradient flows on $\PFl{K}{n}(\mathbb{R})$ are compatible with total positivity, which we discuss in a separate paper \cite{bloch_karp1}. We discovered that in certain cases, the classification of such flows differs depending on whether one works with Lusztig's total positivity or with Pl\"{u}cker positivity; see Theorem 5.14, Theorem 5.18, and Remark 5.20 of \cite{bloch_karp1}. \cref{cyclic} above may be regarded as an infinitesimal analogue of this phenomenon.

\subsection{History}\label{sec_history}
We discuss previous work related to \cref{converse} and \cref{decompositions}. Much of this work has focused on the case when $\PFl{K}{n}(\mathbb{R})$ is a Grassmannian $\Gr_{k,n}(\mathbb{R})$, due to Postnikov's study of the matroid decomposition of $\Gr_{k,n}^{\Delta\ge 0}$ \cite{postnikov07}.

Rietsch \cite[Section 4.5]{rietsch98} (also announced in \cite[Section 3.12]{lusztig98}) stated that Lusztig's notion of total positivity coincides with Pl\"{u}cker positivity for all partial flag varieties $G/P$, but her proof contained an error. She later proved \cref{converse} for Grassmannians $\Gr_{k,n}(\mathbb{R})$ in unpublished notes \cite{rietsch09}. Subsequent proofs were given by Talaska and Williams \cite[Corollary 1.2]{talaska_williams13}, Lam \cite[Remark 3.8]{lam16}, and Lusztig \cite{lusztig}. \cref{decompositions} for the complete flag variety $\Fl_n(\mathbb{R})$ was stated in \cite[(9.15)]{galashin_karp_lam22}, and proofs were given by Lusztig \cite[p.\ 4]{lusztig19} and Boretsky \cite[Theorem 5.25]{boretsky}. Conversely, Chevalier \cite[Example 10.1]{chevalier11} gave an example showing that $\PFl{1,3}{4}^{>0} \neq \PFl{1,3}{4}^{\Delta >0}$; see \cite[Remark 5.17]{knutson_lam_speyer13} for a related discussion. In a different direction, Geiss, Leclerc, and Schr\"{o}er \cite[Conjecture 19.2]{geiss_leclerc_schroer08} have conjectured an algebraic description of the totally positive part of any partial flag variety $G/P$.

\cref{decompositions} was proved in the case of Grassmannians $\Gr_{k,n}^{\ge 0}$ by Postnikov \cite[Corollary 3.8]{postnikov07}. We point out that his proof implicitly uses the fact that $\Gr_{k,n}^{\ge 0} = \Gr_{k,n}^{\Delta\ge 0}$, which was only later proved by Rietsch \cite{rietsch09}, as described above. A subsequent proof was given by Talaska and Williams \cite[Corollary 1.2]{talaska_williams13}, and the result can also be proved using work of Tsukerman and Williams \cite[Section 7]{tsukerman_williams15}. \cref{decompositions} in the cases of $\Fl_n^{\ge 0}$ and $\PFl{1,3}{4}^{\ge 0}$ was proved by Tsukerman and Williams \cite[Theorem 7.1 and Remark 7.3]{tsukerman_williams15}.

\subsection{Further directions}
Lusztig \cite{lusztig94,lusztig98} introduced the totally positive and totally nonnegative part of an arbitrary partial flag variety $G/P$, where $G$ is a semisimple algebraic group $G$ over $\mathbb{R}$ and $P$ is a parabolic subgroup. It would be interesting to study the problems considered in this paper for such $G/P$. We mention that Fomin and Zelevinsky \cite{fomin_zelevinsky00b} have considered analogous problems for the group $G$. In particular, they showed that an element of $G$ is totally positive (respectively, totally nonnegative) in the sense of Lusztig \cite{lusztig94} if and only if all its generalized minors are positive (respectively, nonnegative). We consider only the type $A$ case here both for the sake of concreteness, and because our proofs of \cref{converse} and \cref{cyclic} are elementary. We point out that many of our arguments in \cref{sec_decompositions} used to prove \cref{decompositions} can be applied to general Coxeter groups; however, a key tool in our proof is Postnikov's \cref{decompositions_grassmannian}, which is only known in type $A$.

Lusztig \cite[Theorem 3.4]{lusztig98} showed that for any partial flag variety $G/P$, there exists a positive weight $\lambda$ such that a partial flag is totally positive (respectively, totally nonnegative) if and only if its coordinates with respect to the canonical basis of the irreducible $G$-module with highest weight $\lambda$ are all positive (respectively, nonnegative). He also posed the problem of finding the minimal such $\lambda$. Our \cref{converse} gives a partial answer for type $A$ partial flag varieties $G/P = \PFl{K}{n}(\mathbb{R})$: it implies that the sum of fundamental weights $\sum_{k\in K}\omega_k$ is a valid choice of $\lambda$ (and hence minimal) if and only if $K$ consists of consecutive integers.

As we described above, the combinatorics and topology of the cell decomposition of $\PFl{K}{n}^{\ge 0}$ have been extensively studied. For example, each cell has an explicit parametrization \cite[Section 11]{marsh_rietsch04}, the closure poset is shellable \cite[Theorem 1.2]{williams07}, and the cell decomposition forms a regular CW complex \cite[Theorem 1.1]{galashin_karp_lam22}. In light of \cref{converse} and \cref{decompositions}, it would be interesting to further study the matroid decomposition of $\PFl{K}{n}^{\Delta\ge 0}$. Bai, He, and Lam \cite{bai_he_lam16} studied the case $\PFl{1,3}{n}^{\Delta \ge 0}$. In \cite[Corollary 6.16]{bloch_karp1}, we show that $\PFl{K}{n}^{\Delta >0}$ is homeomorphic to an open ball and $\PFl{K}{n}^{\Delta\ge 0}$ is homeomorphic to a closed ball.

\subsection{Outline}
In \cref{sec_background}, we give some background on partial flag varieties and total positivity. In \cref{sec_converse}, we prove \cref{converse}. In \cref{sec_cyclic}, we prove \cref{cyclic}. In \cref{sec_decompositions}, we prove \cref{decompositions}.

\subsection*{Acknowledgments}
We thank George Lusztig, Lauren Williams, and an anonymous reviewer for helpful comments.

\section{Background}\label{sec_background}

\noindent In this section, we define partial flag varieties and their totally positive and totally nonnegative parts, and recall some classical results in the theory of total positivity. For further details on total positivity, we refer to \cite{gantmaher_krein50, karlin68, lusztig94, fomin_zelevinsky00a, pinkus10, fallat_johnson11}. 

\subsection{Notation}\label{sec_notation}
Let $\mathbb{N} := \{0, 1, 2, \dots\}$. For $n\in\mathbb{N}$, we let $[n]$ denote $\{1, 2, \dots, n\}$, and for $i,j\in\mathbb{Z}$, we let $[i,j]$ denote the interval of integers $\{i, i+1, \dots, j\}$. Given a set $S$ and $k\in\mathbb{N}$, we let $\binom{S}{k}$ denote the set of $k$-element subsets of $S$.

We let $e_1, \dots, e_n$ denote the unit vectors of $\mathbb{R}^n$. We let $\mathbb{P}^n(\mathbb{R})$ denote $n$-dimensional real projective space, defined to be $\mathbb{R}^{n+1}\setminus\{0\}$ modulo multiplication by $\mathbb{R}^\times$. For $\lambda_1, \dots, \lambda_n\in\mathbb{R}$, we let $\Diag{\lambda_1, \dots, \lambda_n}$ denote the $n\times n$ diagonal matrix with diagonal entries $\lambda_1, \dots, \lambda_n$. We let $\GL_n(\mathbb{R})$ denote the group of invertible real $n\times n$ matrices.

Given an $m\times n$ matrix $A$, we let $\transpose{A}$ denote the transpose of $A$. For $1 \le k \le m,n$ and subsets $I\in\binom{[m]}{k}$ and $J\in\binom{[n]}{k}$, we let $\Delta_{I,J}(A)$ denote the determinant of the submatrix of $A$ in rows $I$ and columns $J$, called a {\itshape minor} of $A$ of {\itshape order} $k$. If $J = [k]$, we call $\Delta_{I,J}(A)$ a {\itshape left-justified minor} of $A$. We also let $\sumof{I}$ denote the sum of the elements in $I$.

We will make repeated use of the {\itshape Cauchy--Binet identity} (see e.g.\ \cite[I.(14)]{gantmacher59}): if $A$ is an $m\times n$ matrix, $B$ is an $n\times p$ matrix, and $1 \le k \le m,p$, then
\begin{align}\label{cauchy-binet}
\Delta_{I,J}(AB) = \sum_{K\in\binom{[n]}{k}}\Delta_{I,K}(A)\Delta_{K,J}(B) \quad \text{ for all } I\in\textstyle\binom{[m]}{k} \text{ and } J\in\binom{[p]}{k}.
\end{align}

We now introduce partial flag varieties.
\begin{defn}\label{defn_Fl}
Let $n\in\mathbb{N}$, and let $K = \{k_1 < \cdots < k_l\} \subseteq [n-1]$. Let $\P{K}{n}(\mathbb{R})$ denote the parabolic subgroup of $\GL_n(\mathbb{R})$ of block upper-triangular matrices with diagonal blocks of sizes $k_1, k_2 - k_1, \dots, k_l - k_{l-1}, n - k_l$. We define the {\itshape partial flag variety}
$$
\PFl{K}{n}(\mathbb{R}) := \GL_n(\mathbb{R})/\P{K}{n}(\mathbb{R}).
$$
We identify $\PFl{K}{n}(\mathbb{R})$ with the variety of partial flags of subspaces in $\mathbb{R}^n$
$$
\{V = (V_{k_1}, \dots, V_{k_l}) : 0 \subset V_{k_1} \subset \cdots \subset V_{k_l} \subset \mathbb{R}^n \text{ and } \dim(V_{k_i}) = k_i \text{ for } 1 \le i \le l\}.
$$
The identification sends $g\in\GL_n(\mathbb{R})/\P{K}{n}(\mathbb{R})$ to the tuple $(V_k)_{k\in K}$, where $V_k$ is the span of the first $k$ columns of $g$ for all $k\in K$. More generally, if $A$ is any real matrix with $n$ rows and at least $k_l$ columns such that $V_k$ is the span of the first $k$ columns of $A$ for all $k\in K$, we say that $A$ {\itshape represents $V$}. Note that $\GL_n(\mathbb{R})$ acts on $\PFl{K}{n}(\mathbb{R})$ on the left.

We have the {\itshape Pl\"{u}cker embedding}
\begin{align}\label{plucker_embedding}
\begin{gathered}
\PFl{K}{n}(\mathbb{R}) \hookrightarrow \mathbb{P}^{\left(\hspace*{-1pt}\binom{n}{k_1}-1\right)} \times \cdots \times \mathbb{P}^{\left(\hspace*{-1pt}\binom{n}{k_l}-1\right)}, \\
V \mapsto \Big((\Delta_I(A))_{I\in\binom{[n]}{k_1}}, \dots, (\Delta_I(A))_{I\in\binom{[n]}{k_l}}\Big),
\end{gathered}
\end{align}
where $A$ denotes any matrix representative of $V$. (We can check that the definition does not depend on the choice of $A$.) We call the left-justified minors $\Delta_I(A)$ appearing above the {\itshape Pl\"{u}cker coordinates} of $V\in\PFl{K}{n}(\mathbb{R})$ (also known as {\itshape flag minors}), which we denote by $\Delta_I(V)$. We point out that our Pl\"{u}cker coordinates differ from the {\itshape generalized Pl\"{u}cker coordinates} of Gelfand and Serganova \cite{gel'fand_serganova87}, though each encodes essentially the same data; see \cref{generalized_plucker_remark}.

For any $K'\subseteq K$, we have a projection map
\begin{align}\label{defn_Fl_surjection}
\PFl{K}{n}(\mathbb{R}) \twoheadrightarrow \PFl{K'}{n}(\mathbb{R}), \quad (V_k)_{k\in K} \mapsto (V_k)_{k\in K'}.
\end{align}
The map \eqref{defn_Fl_surjection} retains only the subspaces of a partial flag whose dimensions lie in $K'$.

We mention two instances of $\PFl{K}{n}(\mathbb{R})$ which are of particular interest. If $K = [n-1]$, then $\PFl{K}{n}(\mathbb{R})$ is the {\itshape complete flag variety} of $\mathbb{R}^n$, which we denote by $\Fl_n(\mathbb{R})$. If $K$ is the singleton $\{k\}$, then $\PFl{K}{n}(\mathbb{R})$ is the {\itshape Grassmannian} of all $k$-dimensional subspaces of $\mathbb{R}^n$, which we denote by $\Gr_{k,n}(\mathbb{R})$. We also extend the definition of $\Gr_{k,n}(\mathbb{R})$ to $k=0$ and $k=n$.
\end{defn}

\begin{eg}\label{eg_Fl}
Let $n := 4$ and $K := \{1, 3\}$. Then
$$
\P{1,3}{4}(\mathbb{R}) = \left\{\begin{bmatrix}
\ast & \ast & \ast & \ast \\
0 & \ast & \ast & \ast \\
0 & \ast & \ast & \ast \\
0 & 0 & 0 & \ast
\end{bmatrix}\right\}\subseteq\GL_4(\mathbb{R}) \quad \text{ and } \quad \PFl{1,3}{4}(\mathbb{R}) = \GL_4(\mathbb{R})/\P{1,3}{4}(\mathbb{R}).
$$
We identify $\PFl{1,3}{4}(\mathbb{R})$ with the variety of partial flags $V = (V_1, V_3)$ of subspaces of $\mathbb{R}^4$, where $\dim(V_1) = 1$, $\dim(V_3) = 3$, and $V_1 \subset V_3$.

We can represent a generic partial flag $V\in\PFl{1,3}{4}(\mathbb{R})$ by a matrix of the form
$$
A = \begin{bmatrix}
1 & 0 & 0 \\
a & 1 & 0 \\
b & 0 & 1 \\
c & d & e
\end{bmatrix}, \quad \text{ where } a,b,c,d,e\in\mathbb{R}. 
$$
That is, $V_1$ is spanned by the first column of $A$, and $V_3$ is spanned by all three columns of $A$. Then the Pl\"{u}cker embedding \eqref{plucker_embedding} takes $V$ to
\begin{multline*}
\big((\Delta_1(V) : \Delta_2(V) : \Delta_3(V) : \Delta_4(V)), (\Delta_{123}(V) : \Delta_{124}(V) : \Delta_{134}(V) : \Delta_{234}(V))\big) \\
= \big((1 : a : b : c), (1 : e : -d : -ad+c-be)\big) \in \mathbb{P}^3(\mathbb{R})\times\mathbb{P}^3(\mathbb{R}).\qedhere
\end{multline*}

\end{eg}

\begin{defn}\label{defn_totally_positive_matrix}
We say that a real matrix is {\itshape totally positive} if all its minors are positive. For $n\in\mathbb{N}$, we let $\GL_n^{>0}$ denote the subset of $\GL_n(\mathbb{R})$ of totally positive matrices.
\end{defn}

For example, we have $\GL_2^{>0} = \Big\{\scalebox{0.8}{$\begin{bmatrix}a & b \\ c & d\end{bmatrix}$} : a,b,c,d,ad-bc > 0\Big\}$. We now introduce Lusztig's total positivity and Pl\"{u}cker positivity for partial flag varieties.

\begin{defn}\label{defn_totally_positive}
Let $n\in\mathbb{N}$ and $K\subseteq [n-1]$. Following \cite[Section 8]{lusztig94} and \cite[Section 1.5]{lusztig98}, we define the {\itshape totally positive part} of $\PFl{K}{n}(\mathbb{R})$, denoted by $\PFl{K}{n}^{>0}$, as the image of $\GL_n^{>0}$ inside $\PFl{K}{n}(\mathbb{R}) = \GL_n(\mathbb{R})/\P{K}{n}(\mathbb{R})$. Equivalently, $\PFl{K}{n}^{>0}$ consists of all partial flags which can be represented by a totally positive $n\times n$ matrix. We define the {\itshape totally nonnegative part} of $\PFl{K}{n}(\mathbb{R})$, denoted by $\PFl{K}{n}^{\ge 0}$, as the closure of $\PFl{K}{n}^{>0}$ in the Euclidean topology. Note that for any $K'\subseteq K$, the projection map \eqref{defn_Fl_surjection} restricts to surjections
\begin{align}\label{defn_tnn_Fl_surjections}
\PFl{K}{n}^{>0} \twoheadrightarrow \PFl{K'}{n}^{>0} \quad \text{ and } \quad \PFl{K}{n}^{\ge 0} \twoheadrightarrow \PFl{K'}{n}^{\ge 0}.
\end{align}

We also define the {\itshape Pl\"{u}cker-positive part} of $\PFl{K}{n}(\mathbb{R})$, denoted by $\PFl{K}{n}^{\Delta >0}$, as the subset of partial flags whose Pl\"{u}cker coordinates are all positive (up to rescaling). That is, $\PFl{K}{n}^{\Delta >0}$ consists of all partial flags which can be represented by a matrix $A$ such that all left-justified $k\times k$ minors of $A$ are positive for all $k\in K$. We similarly define the {\itshape Pl\"{u}cker-nonnegative part} $\PFl{K}{n}^{\Delta \ge 0}$ by replacing ``positive'' with ``nonnegative'' above.
\end{defn}

Note that by definition, we have $\PFl{K}{n}^{>0} \subseteq \PFl{K}{n}^{\Delta >0}$ and $\PFl{K}{n}^{\ge 0} \subseteq \PFl{K}{n}^{\Delta \ge 0}$. We also have that $\PFl{K}{n}^{\Delta \ge 0}$ is the closure of $\PFl{K}{n}^{\Delta >0}$; see \cref{topology}\ref{topology_plucker}.

\begin{eg}\label{eg_totally_positive}
We have
\begin{gather*}
\Fl_3^{\Delta >0} = \left\{\begin{bmatrix}1 & 0 & 0 \\ a+c & 1 & 0 \\ bc & b & 1\end{bmatrix} : a,b,c > 0\right\} \quad \text{ and } \quad \Gr_{2,4}^{\Delta >0} = \left\{\begin{bmatrix}1 & 0 \\ a & b \\ 0 & 1 \\ -c & d\end{bmatrix} : a,b,c,d > 0\right\}.\qedhere
\end{gather*}

\end{eg}

\begin{rmk}\label{lusztig_definition}
Lusztig's original definition of $\PFl{K}{n}^{>0}$ is slightly different than the one we give in \cref{defn_totally_positive}, but is equivalent. Namely, let $\N_n(\mathbb{R})$ be the subset of $\GL_n(\mathbb{R})$ of all upper-triangular matrices with $1$'s on the diagonal. We define $\N_n^{>0}$ to be the subset of $\N_n(\mathbb{R})$ of matrices whose minors are all positive, except for those which are zero due to upper triangularity. Let $(\N_n^-)^{>0}$ denote the transpose of $\N_n^{>0}$, and let $\H_n^{>0}$ denote the subset of $\GL_n(\mathbb{R})$ of diagonal matrices with positive diagonal entries. Then Lusztig \cite[Section 8]{lusztig94} defines $\PFl{K}{n}^{>0}$ to be the image of $(\N_n^-)^{>0}$ inside $\PFl{K}{n}(\mathbb{R})$. This is equal to the image of $\GL_n^{>0}$, because
\begin{align}\label{LDU_decomposition}
\GL_n^{>0} = (\N_n^-)^{>0} \cdot \H_n^{>0} \cdot \N_n^{>0},
\end{align}
and $\H_n^{>0}\cdot\N_n^{>0} \subseteq \P{K}{n}(\mathbb{R})$. (In fact, Lusztig takes \eqref{LDU_decomposition} to hold by definition; see \cite[Section 2.12]{lusztig94}.) The decomposition \eqref{LDU_decomposition} is a result of Cryer \cite[Theorem 1.1]{cryer73}; we refer to \cite[Chapter 2]{fallat_johnson11} for further discussion and references.
\end{rmk}

\begin{rmk}\label{proj_tnn_GL_to_Fl}
A real matrix is called {\itshape totally nonnegative} if all its minors are nonnegative. Every totally nonnegative matrix in $\GL_n(\mathbb{R})$ represents a totally nonnegative flag in $\PFl{K}{n}(\mathbb{R})$. However, not every element of $\PFl{K}{n}^{\ge 0}$ is represented by a totally nonnegative matrix, unless $K = \emptyset$. For example, the element $\scalebox{0.8}{$\begin{bmatrix}0 & -1 \\ 1 & 0\end{bmatrix}$}\in\Fl_2^{\ge 0}$ cannot be represented by a totally nonnegative matrix, since the top-left entry of a totally nonnegative matrix is positive.
\end{rmk}

We now recall two classical results from the theory of total positivity.
\begin{lem}\label{identity_perturbation}
There exists a continuous function $f : \mathbb{R}_{\ge 0} \to \GL_n(\mathbb{R})$ such that $f(0) = \I_n$ and $f(t)\in\GL_n^{>0}$ for all $t > 0$.
\end{lem}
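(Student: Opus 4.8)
The plan is to exhibit an explicit one-parameter curve. Let $A$ be the $n\times n$ tridiagonal matrix with $A_{i,i+1}=A_{i+1,i}=1$ for $1\le i\le n-1$ and all other entries equal to $0$, and set $f(t):=\exp(tA)$. The matrix exponential is continuous (indeed real-analytic) in $t$, so $f$ is continuous; moreover $f(0)=\exp(0)=\I_n$ and $\det f(t)=e^{t\tr(A)}=e^{0}=1$, so $f(t)\in\GL_n(\mathbb{R})$ for all $t\ge 0$. Thus the lemma reduces to the classical assertion that $\exp(tA)$ is totally positive for every $t>0$; see for instance \cite{karlin68} and \cite{gantmaher_krein50}.

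I would recall the standard argument for this. For each $s\ge 0$, the matrix $\exp(sA)$ is totally nonnegative: writing $\exp(sA)=\lim_{m\to\infty}\big(\I_n+\tfrac{s}{m}A\big)^m$, each factor $\I_n+\tfrac{s}{m}A$ is, for $m$ large, a tridiagonal matrix with nonnegative entries whose principal minors are all positive (each such minor factors, by tridiagonality, as a product of contiguous tridiagonal determinants of the form $1+O(1/m^2)$), hence totally nonnegative, and total nonnegativity is preserved under products and entrywise limits. For $s>0$ the matrix $\exp(sA)$ is moreover invertible and satisfies $(\exp(sA))_{i,i+1}\ge s>0$ and $(\exp(sA))_{i+1,i}\ge s>0$ for $1\le i\le n-1$ (each such entry is a power series in $s$ with nonnegative coefficients containing the linear term $s\,A_{i,i\pm 1}$), so $\exp(sA)$ is an oscillatory matrix. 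Finally, by the Gantmacher--Krein theory of oscillation matrices the $(n-1)$-st power of an oscillatory $n\times n$ matrix is totally positive, so $\exp(tA)=\big(\exp(\tfrac{t}{n-1}A)\big)^{n-1}$ is totally positive for every $t>0$.

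Alternatively, one can assemble the curve out of the triangular factorization \eqref{LDU_decomposition}. Let $N$ be the nilpotent matrix with $N_{i,i+1}=1$ for $1\le i\le n-1$ and all other entries $0$. Conjugating by a suitable positive diagonal matrix and rescaling rows and columns by powers of $t$, one checks that for $t>0$ every minor of $\exp(tN)$ has the same sign as the corresponding minor of the Pascal matrix $\big[\binom{j-1}{i-1}\big]_{i,j=1}^{n}$; the latter has all of its non-forced-zero minors positive, since the Lindstr\"{o}m--Gessel--Viennot lemma realizes such a minor as a nonempty count of non-crossing families of lattice paths. Hence $\exp(tN)\in\N_n^{>0}$, and therefore the transpose $\exp(t\transpose{N})$ lies in $(\N_n^-)^{>0}$, for every $t>0$. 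Since $\I_n\in\H_n^{>0}$, the curve $f(t):=\exp(t\transpose{N})\exp(tN)$ lies in $(\N_n^-)^{>0}\cdot\H_n^{>0}\cdot\N_n^{>0}=\GL_n^{>0}$ for $t>0$ by \eqref{LDU_decomposition}, while $f$ is continuous and $f(0)=\I_n$.

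The one point that requires care --- and the reason the lemma is not merely a one-line perturbation --- is that strict total positivity is \emph{not} preserved under limits, only total nonnegativity is, so one cannot simply take a sequence of totally positive matrices tending to $\I_n$ and interpolate. This is precisely why the proof routes through the oscillatory-matrix mechanism (or through an explicit totally positive curve as above). Granting the cited classical facts, the remaining verifications are routine.
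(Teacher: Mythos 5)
Your proposal is correct and takes essentially the same route as the paper: the paper's proof simply cites Karlin's Theorem 3.3.4 to take $f(t) = \exp(tA)$ with $A$ tridiagonal and positive off-diagonal entries (or, alternatively, the matrix $(t^{(i-j)^2})_{i,j}$), which is exactly your main construction, with the Gantmacher--Krein oscillatory-matrix argument you sketch being the standard proof behind that citation. Your second construction via $\exp(t\transpose{N})\exp(tN)$ and \eqref{LDU_decomposition} is a valid bonus but not needed.
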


\begin{proof}
By \cite[II.3.(24)]{gantmaher_krein50} or \cite[(2)]{whitney52} (cf.\ \cite[Problem V.76]{polya_szego25}), we may take $f(t) := (t^{(i-j)^2})_{1 \le i,j \le n}$. Alternatively, by \cite[Theorem 3.3.4]{karlin68}, we may take $f(t) := \exp(tA)$, where $A$ is any tridiagonal $n\times n$ matrix whose entries immediately above and below the diagonal are positive.
\end{proof}

\begin{thm}[{Fekete \cite{fekete_polya12}; see \cite[Lemma 2.1]{pinkus10}}]\label{fekete}
Let $A$ be an $n\times (k+1)$ matrix, where $n\ge k+1$. Suppose that all left-justified $k\times k$ minors of $A$ are positive, and that all $(k+1)\times (k+1)$ minors of $A$ using consecutive rows are positive. Then all $(k+1)\times (k+1)$ minors of $A$ are positive.
\end{thm}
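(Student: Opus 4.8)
The plan is to argue by induction on the number of rows $n$. The crux is that, once one reduces to the partial flags whose row‑index set contains both $1$ and $n$, a single incidence (straightening) relation links the $(k+1)\times(k+1)$ minors of $A$ to its left‑justified $k\times k$ minors, and the hypotheses then pin down the sign.

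For the base case $n=k+1$, the only element of $\binom{[n]}{k+1}$ is the interval $[k+1]$, so its minor is positive by hypothesis. For the inductive step with $n>k+1$, take $I\in\binom{[n]}{k+1}$. If $n\notin I$ then $I\subseteq[n-1]$, and the submatrix of $A$ on rows $[n-1]$ inherits both hypotheses — its left‑justified $k\times k$ minors are among those of $A$, and intervals in $[n-1]$ are intervals in $[n]$ — so $\Delta_{I,[k+1]}(A)>0$ by induction. The case $1\notin I$ is symmetric, using the rows $[2,n]$. So we are reduced to $I=\{1\}\cup I'\cup\{n\}$ with $I'\in\binom{[2,n-1]}{k-1}$, and at this point $\Delta_{J,[k+1]}(A)$ is already known to be positive for every $J$ that omits $1$ or omits $n$.

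For such an $I$, we choose any $j\in[2,n-1]\setminus I'$ (possible since $|[2,n-1]|=n-2\ge k>k-1=|I'|$) and apply the incidence relation for the two‑step flag given by the column span of the first $k$ columns of $A$ sitting inside the full column span of $A$: with the $(k-1)$‑element set $I'$ and the indices $1<j<n$,
\[
\Delta_{I'\cup\{1\},[k]}(A)\,\Delta_{I'\cup\{j,n\},[k+1]}(A)-\Delta_{I'\cup\{j\},[k]}(A)\,\Delta_{I'\cup\{1,n\},[k+1]}(A)+\Delta_{I'\cup\{n\},[k]}(A)\,\Delta_{I'\cup\{1,j\},[k+1]}(A)=0.
\]
The three order‑$k$ minors here are left‑justified, hence positive by hypothesis; the minor $\Delta_{I'\cup\{j,n\},[k+1]}(A)$ omits the row index $1$ and $\Delta_{I'\cup\{1,j\},[k+1]}(A)$ omits $n$, so both are positive by the reduction above. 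Since the two terms not involving $\Delta_{I,[k+1]}(A)=\Delta_{I'\cup\{1,n\},[k+1]}(A)$ enter the relation with a common sign (opposite to that of the $\Delta_I$ term), solving for $\Delta_{I,[k+1]}(A)$ exhibits it as a positive multiple of a sum of two positive numbers, which closes the induction.

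The one genuinely delicate ingredient is the incidence relation itself, together with its sign pattern. It is a classical straightening relation for two‑step flags and can be verified by expanding every minor in the entries of $A$ (equivalently, by Laplace expansion along the last column of $A$ combined with the Plücker relations of the $k$‑dimensional column space); the feature that makes the whole argument go through is precisely that the two terms not containing $\Delta_I$ occur with the \emph{same} sign, so that $\Delta_I$ is isolated as a sum, rather than a difference, of positive quantities — with the wrong sign convention the relation would be useless. As a sanity check, when $k=1$ this relation reduces to the transitivity of the linear order on the rows induced by positivity of the consecutive $2\times2$ minors (which already proves Fekete's lemma in that case), and the general relation is exactly the analogue of that transitivity.
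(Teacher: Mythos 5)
Your argument is correct. The paper does not prove this statement at all --- it quotes it as a classical result of Fekete, citing \cite[Lemma 2.1]{pinkus10} --- so your proof should be judged against the classical one rather than anything in the text. The key identity you invoke is indeed valid with exactly the signs you state: it is the three-term incidence (Grassmann--Pl\"ucker) relation between the order-$k$ and order-$(k+1)$ left-justified minors of $A$, obtained for instance from the linear dependence $\sum_{t\in T}(-1)^{\operatorname{pos}_T(t)}\Delta_{T\setminus t,[k+1]}(A)\,(\text{row }t\text{ of }A)=0$ for $T=I'\cup\{1,j,n\}$ (a rank argument on the $(k+2)\times(k+1)$ submatrix), then pairing with the $k\times k$ minor built on rows $I'$; the signs for $t=1$ and $t=n$ agree and are opposite to the sign for $t=j$, so solving for $\Delta_{I'\cup\{1,n\},[k+1]}(A)$ expresses $\Delta_{I'\cup j,[k]}(A)\,\Delta_{I,[k+1]}(A)$ as a sum of two products of quantities already known to be positive, exactly as you claim. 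The induction bookkeeping is also sound: both hypotheses restrict to the row sets $[n-1]$ and $[2,n]$, the choice of $j$ exists precisely because $n\ge k+2$ in the inductive step, and the two $(k+1)$-minors fed into the relation omit row $1$ or row $n$ and hence are covered by the earlier reductions. Compared with the standard textbook proof (Fekete/Gantmacher--Krein, Pinkus), which inducts on the dispersion $i_{k+1}-i_1-k$ of the row set and uses the same three-term relation to trade a minor for ones of smaller dispersion, your induction on $n$ with reduction to index sets containing both $1$ and $n$ is a mild repackaging of the same mechanism; the only soft spot is that you assert the identity rather than derive it, but your statement of it (including the crucial sign pattern) is precise and correct, and the verification route you sketch works.
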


The following two results are duality and restriction statements for totally nonnegative Grassmannians. See \cite[Section 3]{ardila_rincon_williams16} for closely related results. We note that \cref{perpendicular_pluckers} follows from Jacobi's formula for the matrix inverse; we refer to \cite{karp17} for further discussion and references.
\begin{lem}[{\cite[Lemma 1.11(ii)]{karp17}}]\label{perpendicular_pluckers}
Define the bilinear pairing $\langle\cdot,\cdot\rangle$ on $\mathbb{R}^n$ by
$$
\langle v,w\rangle := v_1w_1 - v_2w_2 + v_3w_3 - \cdots + (-1)^{n-1}v_nw_n.
$$
Given $V\in\Gr_{k,n}(\mathbb{R})$, let $V^\perp := \{w\in\mathbb{R}^n : \langle v,w\rangle = 0 \text{ for all } v\in V\} \in \Gr_{n-k,n}(\mathbb{R})$. Then
$$
\Delta_I(V) = \Delta_{[n]\setminus I}(V^\perp) \quad \text{ for all } I\in\textstyle\binom{[n]}{k}.
$$
In particular, $\cdot^\perp$ defines bijections $\Gr_{k,n}^{\Delta >0}\leftrightarrow\Gr_{n-k,n}^{\Delta >0}$ and $\Gr_{k,n}^{\Delta \ge 0}\leftrightarrow\Gr_{n-k,n}^{\Delta \ge 0}$.
\end{lem}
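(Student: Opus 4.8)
The plan is to reduce the identity to Jacobi's classical formula expressing the minors of an invertible matrix through the complementary minors of its inverse, after stripping off the alternating sign twist built into the pairing $\langle\cdot,\cdot\rangle$. First I would write $\langle v,w\rangle = \transpose{v}Dw$, where $D := \Diag{1, -1, 1, \dots, (-1)^{n-1}}$, and note that $D^2 = \I_n$. For $V\in\Gr_{k,n}(\mathbb{R})$ we then have $w\in V^\perp$ exactly when $Dw$ is orthogonal, in the usual Euclidean sense, to every vector of $V$; hence $V^\perp = D\cdot V^{\perp_0}$, where $V^{\perp_0}$ denotes the ordinary orthogonal complement. This splits the problem into two ingredients: comparing the Pl\"ucker coordinates of $V$ with those of $V^{\perp_0}$, and tracking how left-multiplication by $D$ acts on Pl\"ucker coordinates.

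For the first ingredient, represent $V$ by the first $k$ columns of some $g\in\GL_n(\mathbb{R})$. Since $\transpose{g}w$ lies in $\spn(e_{k+1}, \dots, e_n)$ precisely when $w$ is orthogonal to the first $k$ columns of $g$, the complement $V^{\perp_0}$ is spanned by the last $n-k$ columns of $h := (\transpose{g})^{-1}$. Thus for $J\in\binom{[n]}{n-k}$ we get $\Delta_J(V^{\perp_0}) = \Delta_{J,[k+1,n]}(h) = \Delta_{[k+1,n],J}(g^{-1})$, and Jacobi's formula gives $\Delta_{[k+1,n],J}(g^{-1}) = (-1)^{\sum_{i=k+1}^n i + \sumof{J}}\,\Delta_{[n]\setminus J,[k]}(g)/\det(g)$, where $\Delta_{[n]\setminus J,[k]}(g) = \Delta_{[n]\setminus J}(V)$. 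For the second ingredient, since $V^\perp = D\cdot V^{\perp_0}$ is spanned by the last $n-k$ columns of $Dh$, scaling the $j$th row by $(-1)^{j-1}$ gives $\Delta_J(V^\perp) = \Delta_{J,[k+1,n]}(Dh) = \big(\prod_{j\in J}(-1)^{j-1}\big)\,\Delta_J(V^{\perp_0})$.

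Combining the two ingredients and simplifying the exponents --- the factor $\prod_{j\in J}(-1)^{j-1} = (-1)^{\sumof{J} - (n-k)}$ cancels the $(-1)^{\sumof{J}}$ coming from Jacobi's formula because $(-1)^{2\sumof{J}} = 1$ --- I obtain $\Delta_J(V^\perp) = c\cdot\Delta_{[n]\setminus J}(V)$, where $c = (-1)^{\,n-k+\sum_{i=k+1}^n i}/\det(g)$ depends only on $n$, $k$, $g$ and not on $J$; since Pl\"ucker coordinates are only defined up to a common rescaling, this is exactly the asserted identity. The ``in particular'' statement then follows at once: if all $\Delta_I(V)$ are positive (respectively, nonnegative), then all $\Delta_J(V^\perp)$ share a common sign, so after rescaling $V^\perp\in\Gr_{n-k,n}^{\Delta>0}$ (respectively, $\Gr_{n-k,n}^{\Delta\ge 0}$); and because $D^2 = \I_n$ and the Euclidean orthogonal complement is an involution, a direct computation gives $(V^\perp)^\perp = V$, so $\cdot^\perp$ is a bijection in each case. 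I do not expect a genuine obstacle here: all of the mathematical content is carried by Jacobi's formula, and the only care required is the bookkeeping of signs in the exponents, which I have sketched above.
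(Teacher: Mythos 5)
Your argument is correct and follows exactly the route the paper indicates: the paper does not prove this lemma itself, but cites \cite{karp17} and remarks that it follows from Jacobi's formula for the matrix inverse, which is precisely what you carry out, with the diagonal sign matrix $D$ reducing the alternating pairing to the Euclidean one. Your sign bookkeeping is right, and noting that the $J$-independent constant $c$ is absorbed by the projective rescaling inherent in Pl\"ucker coordinates (together with $(V^\perp)^\perp = V$ for the bijection claim) is the correct way to land the stated equality.
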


\begin{eg}\label{eg_perpendicular_pluckers}
Let $V\in\Gr_{2,4}(\mathbb{R})$ be represented by the matrix
$$
\begin{bmatrix}
1 & 0 \\
0 & 1 \\
a & b \\
c & d
\end{bmatrix}.
$$
Then $V^\perp\in\Gr_{2,4}(\mathbb{R})$ is represented by the matrix
\begin{gather*}
\begin{bmatrix}
-a & c \\
b & -d \\
1 & 0 \\
0 & 1
\end{bmatrix}.\qedhere
\end{gather*}

\end{eg}

\begin{lem}\label{tnn_restriction}
Let $V\in\Gr_{k,n}^{\Delta\ge 0}$, and let $m \le n$. Define $W := V \cap \spn(e_1, \dots, e_m)$, and let $d := \dim(W)$. Then $W\in\Gr_{d,m}^{\Delta\ge 0}$.
\end{lem}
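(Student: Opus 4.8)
The plan is to induct on $n - m$. When $n - m = 0$ we have $W = V$ and there is nothing to show (the degenerate cases $k \in \{0, n\}$ being immediate as well), so it suffices to treat the case $m = n - 1$ and then invoke the inductive hypothesis: writing $W' := V \cap \spn(e_1, \dots, e_{n-1})$ and $d' := \dim(W')$, once we know $W' \in \Gr_{d', n-1}^{\Delta \ge 0}$, we may apply the lemma with $n$ replaced by $n - 1$ to the space $W' \cap \spn(e_1, \dots, e_m) = V \cap \spn(e_1, \dots, e_m) = W$.

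So assume $m = n - 1$, and fix an $n \times k$ matrix $A$ representing $V$ all of whose maximal minors are nonnegative. If the last row of $A$ vanishes, then $V \subseteq \spn(e_1, \dots, e_{n-1})$, so $W = V$, $d = k$, and the Pl\"{u}cker coordinates $\Delta_J(W)$ for $J \in \binom{[n-1]}{k}$ form a subset of the (nonnegative) Pl\"{u}cker coordinates of $V$. Otherwise choose a column index $j_0$ with $a_{n, j_0} \neq 0$, and for each $j \neq j_0$ subtract $a_{n,j}/a_{n,j_0}$ times column $j_0$ from column $j$. Such operations leave every maximal minor unchanged, so the resulting matrix $A'$ has $\Delta_I(A') = \Delta_I(A)$ for all $I$, while its last row is supported only in position $j_0$. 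Hence every column of $A'$ other than column $j_0$ lies in $\spn(e_1, \dots, e_{n-1})$, and the $(n-1) \times (k-1)$ matrix $B$ obtained from $A'$ by deleting row $n$ and column $j_0$ represents $W$; in particular $d = k - 1$.

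The crux is one cofactor expansion: for $J \in \binom{[n-1]}{k-1}$, expanding $\Delta_{J \cup \{n\}}(A')$ along its last row leaves only the term coming from the entry $a_{n, j_0}$, which gives $\Delta_{J \cup \{n\}}(A') = (-1)^{k + j_0}\, a_{n,j_0}\, \Delta_J(B)$. Therefore $\Delta_J(B) = c\, \Delta_{J \cup \{n\}}(A)$ for the nonzero scalar $c := (-1)^{k+j_0}/a_{n,j_0}$, which crucially does not depend on $J$. Since $\Delta_{J \cup \{n\}}(A) \ge 0$ for every such $J$, all Pl\"{u}cker coordinates of $W$ have sign $\sign(c)$ or vanish; negating a single column of $B$ when $c < 0$, we conclude $W \in \Gr_{d, n-1}^{\Delta \ge 0}$, which completes the induction.

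The argument is essentially routine, and the only point requiring attention — rather than being a genuine obstacle — is the sign in the cofactor expansion, and especially the observation that this sign is independent of $J$, since that is exactly what forces all Pl\"{u}cker coordinates of $W$ to share a common sign. The enabling fact, that the last row of a matrix representative can be cleared by column operations without disturbing any maximal minor, is standard in the theory of total positivity.
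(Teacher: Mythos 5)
Your proof is correct. The core mechanism is the same one the paper exploits --- the Pl\"{u}cker coordinates of $W$ coincide, up to a single nonzero scalar, with the sub-family of Pl\"{u}cker coordinates of $V$ indexed by sets containing a fixed complementary set --- but your execution differs from the paper's. You work from $V$ downward: induct on $n-m$, reduce to $m=n-1$, clear the last row of a representative of $V$ by column operations (which preserve all maximal minors), and extract $B$ by one cofactor expansion, with the essential observation that the sign $(-1)^{k+j_0}/a_{n,j_0}$ is independent of $J\in\binom{[n-1]}{k-1}$ and can be absorbed by rescaling. The paper instead works from $W$ upward in a single step for general $m$: it starts with a representative $B$ of $W$, extends it to a block representative $\begin{bmatrix} B & \ast \\ 0 & C\end{bmatrix}$ of $V$ with the complement $C$ in reduced column echelon form occupying the last columns, and reads off $\Delta_I(B)=\Delta_{I\cup J'}(A)$ for one fixed $J'$ --- the block-triangular shape makes every relevant identity hold on the nose, so no induction and no sign bookkeeping are needed. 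Your route costs a little more care with signs (which you handle correctly, including the key point that the constant $c$ does not depend on $J$) and an induction, but it is self-contained and makes explicit how the reduction proceeds one coordinate hyperplane at a time; the paper's choice of representative buys brevity and sign-freeness at the price of positing the block form up front.
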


\begin{proof}
Take an $m\times d$ matrix $B$ representing $W$. Then there exists an $n\times k$ matrix $A$ representing $V$ of the form
$$
A = \begin{bmatrix}
B & \ast \\
0 & C
\end{bmatrix},
$$
where $C$ is an $(n-m)\times (k-d)$ matrix of rank $k-d$. We may assume that $C$ is in reduced column echelon form, so that $C$ restricted to some subset $J\in\binom{[n-m]}{k-d}$ of rows equals $\I_{k-d}$. Let $J' := \{j+m : j\in J\}$. Then for all $I\in\binom{[m]}{d}$, we have
$$
\Delta_I(B) = \Delta_{I\cup J'}(A), \quad \text{ so } \quad \Delta_I(W) = \Delta_{I\cup J'}(V).
$$
Since $V\in\Gr_{k,n}^{\Delta\ge 0}$, we have $W\in\Gr_{d,m}^{\Delta\ge 0}$.
\end{proof}

\section{Lusztig's total positivity vs.\ Pl\"{u}cker positivity}\label{sec_converse}

\noindent In this section we prove \cref{converse}. Our argument will be based on the preliminary results \cref{topology}, \cref{tp_extension}, and \cref{tnn_counterexample}.
\begin{lem}\label{converse_complete}
For $n\in\mathbb{N}$, we have $\Fl_n^{>0} = \Fl_n^{\Delta>0}$.
\end{lem}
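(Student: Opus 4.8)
The plan is to show $\Fl_n^{\Delta>0}\subseteq\Fl_n^{>0}$; the reverse inclusion holds by definition. So let $V\in\Fl_n^{\Delta>0}$, meaning $V$ is represented by a matrix whose left-justified $k\times k$ minors are positive for every $k\in[n-1]$. The natural normal form is a lower-triangular unipotent matrix $A$: write $V=(V_1\subset\cdots\subset V_{n-1})$ and build $A$ column by column so that the span of the first $k$ columns is $V_k$; positivity of the flag minors $\Delta_{[k]}(A)$ shows this can be done with $A$ lower-triangular unipotent, after rescaling columns. The goal is then to show $A$ is itself totally positive (equivalently, by \cref{lusztig_definition}, $A\in(\N_n^-)^{>0}$). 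By definition of $(\N_n^-)^{>0}$, this means every minor $\Delta_{I,J}(A)$ that is not forced to vanish by lower-triangularity — i.e.\ every minor with $I\ge J$ entrywise — must be positive.

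The key reduction is a \emph{Fekete-type} argument. The flag minors $\Delta_{[k]}(A)=\Delta_{[k],[k]}(A)$ are the leading principal minors, which are positive by hypothesis (they equal $1$ for the unipotent normal form, but in any case are positive). More generally, I would first verify that all left-justified minors $\Delta_{I,[k]}(A)$ with $I\in\binom{[n]}{k}$, $I\ge[k]$, are positive — but wait, for a complete flag we only assume the \emph{top} flag minor $\Delta_{[k],[k]}$ is positive, not all of $\Delta_{I,[k]}$. Here is where the structure must be used inductively: the columns of $A$ encode $V_1\subset\cdots\subset V_{n-1}$, and for each pair $k<k'$ the $2\times\cdots$ containment forces relations among minors. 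Concretely, I would induct on $n$. The projection $\Fl_n\twoheadrightarrow\Fl_{n-1}$ forgetting $V_{n-1}$ (or rather restricting attention to $V_1\subset\cdots\subset V_{n-2}$ inside a suitable coordinate subspace) lets me assume the submatrix of $A$ in the first $n-1$ rows or columns is already totally positive in the appropriate sense. Then I must propagate positivity to the remaining minors involving the last row/column. The tool for this last step is exactly \cref{fekete}: given that all left-justified $k\times k$ minors are positive and all $(k+1)\times(k+1)$ minors using \emph{consecutive rows} are positive, all $(k+1)\times(k+1)$ minors are positive. So the crux is to establish positivity of the minors on consecutive rows.

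To get the consecutive-row minors, I would use the following: a lower-triangular unipotent $A$ whose \emph{solid} (contiguous-row, contiguous-column, and left-justified or suitably anchored) minors are positive is totally positive — this is the classical criterion that total positivity of a matrix can be checked on a spanning set of "initial" minors. More precisely, I would invoke the standard fact (Fekete/Gasca–Peña, or the $LDU$ characterization behind \eqref{LDU_decomposition} and Cryer's theorem) that $A\in(\N_n^-)^{>0}$ iff all its minors of the form $\Delta_{[i,i+k-1],[k]}$ (lower-left-anchored contiguous minors) are positive, and these in turn reduce via the flag containments to the positivity of the flag minors $\Delta_{[k]}(V_k)$. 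Actually the cleanest route: use \cref{identity_perturbation} to write nearby totally positive matrices, but that gives only a limiting statement. Instead I expect the real argument to build $A$ so that it is visibly a product of elementary Jacobi matrices $x_i(t_i)$ with $t_i>0$ — i.e.\ to run Lusztig's/Loewner's parametrization backward: the positivity of the flag minors is precisely what is needed to solve for positive parameters $t_i$ in the factorization $A=\prod x_{i}(t_{i})$ into lower elementary matrices, and any such product is totally nonnegative, hence (being invertible with the right support) totally positive modulo the parabolic.

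\textbf{The main obstacle} I anticipate is the bookkeeping in the inductive step: showing that positivity of the flag minors $\Delta_{[k]}(V_k)$ for \emph{all} $k\in[n-1]$ (and not just for a single Grassmannian) supplies enough positive contiguous minors to feed into \cref{fekete} at every order simultaneously, and organizing this so the induction on $n$ closes. A secondary subtlety is making precise the passage from "a matrix representing $V$" to "a totally positive matrix representing $V$" modulo the parabolic $\P{[n-1]}{n}(\mathbb{R})=\B_n$, i.e.\ checking that the normalization to lower-triangular unipotent form does not destroy positivity of any minor we need — this is where the containments $V_k\subset V_{k+1}$ do the work, guaranteeing the column operations are upper-triangular and hence preserve left-justified minors up to positive scalars.
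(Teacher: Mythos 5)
There is a genuine gap, and it begins with a misreading of the hypothesis halfway through your argument. You state the hypothesis correctly at the outset, but the parenthetical ``wait, for a complete flag we only assume the top flag minor $\Delta_{[k],[k]}$ is positive'' is wrong: $V\in\Fl_n^{\Delta>0}$ means that for every $k$ \emph{all} Pl\"{u}cker coordinates of $V_k$ are positive, i.e.\ a representing matrix $A$ has \emph{all} left-justified minors $\Delta_{I,[k]}(A)$, $I\in\binom{[n]}{k}$, positive; and the left-to-right column operations bringing $A$ to lower-triangular form rescale each order-$k$ family by a common positive factor, so this is preserved. Because of this confusion the rest of the proposal becomes a search for a workaround and never closes: the route through \cref{fekete} requires positivity of the consecutive-row minors of order $k+1$, which you do not produce; the appeal to a Gasca--Pe\~{n}a/Cryer-type criterion that a lower unipotent matrix with positive left-justified minors is (triangularly) totally positive is essentially the statement being proved, so invoking it is circular unless you prove it; and the suggestion of solving for positive parameters in a factorization into elementary Jacobi matrices is precisely the nontrivial content that would have to be carried out. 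You yourself flag the ``main obstacle'' (supplying enough contiguous minors to feed \cref{fekete} at every order and closing the induction on $n$) without resolving it, so what you have is an outline, not a proof.

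For comparison, the paper's proof avoids all of this with a single symmetrization trick: after normalizing $A$ to be lower-triangular with all left-justified minors positive, set $g := A\Diag{t^{n-1},\dots,t,1}\transpose{A}$, which represents the same flag because $\Diag{t^{n-1},\dots,t,1}\transpose{A}$ is upper-triangular and invertible. By Cauchy--Binet \eqref{cauchy-binet}, $\Delta_{I,J}(g)=\sum_{K}\Delta_{I,K}(A)\Delta_{J,K}(A)\,t^{kn-\sumof{K}}$, whose dominant term as $t\to\infty$ is $\Delta_{I,[k]}(A)\Delta_{J,[k]}(A)\,t^{kn-\binom{k+1}{2}}>0$; hence $g\in\GL_n^{>0}$ for $t$ sufficiently large, giving $V\in\Fl_n^{>0}$. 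The argument uses exactly the positivity of all left-justified minors of every order --- the datum you talked yourself out of having.
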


\begin{proof}
We know that $\Fl_n^{>0} \subseteq \Fl_n^{\Delta >0}$. Conversely, let $V\in\Fl_n^{\Delta > 0}$. Then there exists an $n\times n$ matrix $A$ representing $V$ whose left-justified minors are all positive. After performing left-to-right column operations, we may assume that $A$ is lower-triangular. For $t > 0$, define
$$
g := A\Diag{t^{n-1},\dots,t,1}\transpose{A}\in\GL_n(\mathbb{R}),
$$
which also represents $V$. We claim that $g\in\GL_n^{>0}$ for all $t$ sufficiently large, whence $V\in\Fl_n^{>0}$, completing the proof. (In fact, $g\in\GL_n^{>0}$ for all $t > 0$, by \eqref{LDU_decomposition}.) To see this, note that for $1 \le k \le n$ and $I,J\in\binom{[n]}{k}$, by \eqref{cauchy-binet} we have that $\Delta_{I,J}(g)$ equals
$$
\sum_{K\in\binom{[n]}{k}}\Delta_{I,K}(A)\Delta_{J,K}(A)t^{kn-\sumof{K}} \\
= \Delta_{I,[k]}(A)\Delta_{J,[k]}(A)t^{kn - \binom{k+1}{2}} + \text{lower order terms}
$$
as $t\to\infty$.
\end{proof}

\begin{lem}\label{matrix_action}
Let $K\subseteq [n-1]$, and let $g\in\GL_n^{>0}$.
\begin{enumerate}[label=(\roman*), leftmargin=*, itemsep=2pt]
\item\label{matrix_action_lusztig} For all $V\in\PFl{K}{n}^{\ge 0}$, we have $g\cdot V\in\PFl{K}{n}^{>0}$.
\item\label{matrix_action_plucker} For all $V\in\PFl{K}{n}^{\Delta\ge 0}$, we have $g\cdot V\in\PFl{K}{n}^{\Delta >0}$.
\end{enumerate}

\end{lem}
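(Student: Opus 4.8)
The plan is to prove part~\ref{matrix_action_plucker} first by a direct computation with the Cauchy--Binet identity, and then to deduce part~\ref{matrix_action_lusztig} from it by passing to the complete flag variety and invoking \cref{converse_complete}.

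For part~\ref{matrix_action_plucker}: since $V\in\PFl{K}{n}^{\Delta\ge 0}$, choose a matrix $A$ with $n$ rows representing $V$ all of whose left-justified $k\times k$ minors are nonnegative, for every $k\in K$. Then $gA$ represents $g\cdot V$, and for each $k\in K$ and each $I\in\binom{[n]}{k}$, the Cauchy--Binet identity \eqref{cauchy-binet} gives
$$
\Delta_{I,[k]}(gA) = \sum_{L\in\binom{[n]}{k}}\Delta_{I,L}(g)\,\Delta_{L,[k]}(A).
$$
Every coefficient $\Delta_{I,L}(g)$ is positive since $g\in\GL_n^{>0}$, and every $\Delta_{L,[k]}(A)$ is nonnegative by the choice of $A$; moreover the first $k$ columns of $A$ are linearly independent (they span a $k$-dimensional subspace), so $\Delta_{L,[k]}(A)\neq 0$ for at least one $L\in\binom{[n]}{k}$. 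Hence $\Delta_{I,[k]}(gA) > 0$ for all $I\in\binom{[n]}{k}$ and all $k\in K$, so $gA$ exhibits $g\cdot V$ as an element of $\PFl{K}{n}^{\Delta >0}$.

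For part~\ref{matrix_action_lusztig}: I would reduce to the case $K = [n-1]$. The projection $\Fl_n(\mathbb{R})\twoheadrightarrow\PFl{K}{n}(\mathbb{R})$ of \eqref{defn_Fl_surjection} is $\GL_n(\mathbb{R})$-equivariant and, by \eqref{defn_tnn_Fl_surjections}, restricts to surjections $\Fl_n^{\ge 0}\twoheadrightarrow\PFl{K}{n}^{\ge 0}$ and $\Fl_n^{>0}\twoheadrightarrow\PFl{K}{n}^{>0}$. Hence, given $V\in\PFl{K}{n}^{\ge 0}$, pick a lift $\tilde V\in\Fl_n^{\ge 0}$; it then suffices to prove $g\cdot\tilde V\in\Fl_n^{>0}$, since $g\cdot V$ is the image of $g\cdot\tilde V$ under the projection. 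Now $\Fl_n^{\ge 0}\subseteq\Fl_n^{\Delta\ge 0}$ by definition, so part~\ref{matrix_action_plucker} applied to $\tilde V$ yields $g\cdot\tilde V\in\Fl_n^{\Delta >0}$, and \cref{converse_complete} identifies $\Fl_n^{\Delta >0}$ with $\Fl_n^{>0}$, completing the proof.

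The main obstacle is the point just used in part~\ref{matrix_action_lusztig}: one \emph{cannot} argue directly as in part~\ref{matrix_action_plucker}, because the Cauchy--Binet computation only shows $g\cdot V\in\PFl{K}{n}^{\Delta >0}$, which is strictly weaker than $g\cdot V\in\PFl{K}{n}^{>0}$ for general $K$ (indeed, the two parts need not coincide, by \cref{converse}). Passing through $\Fl_n(\mathbb{R})$, where the two notions of positivity agree by \cref{converse_complete}, is exactly what upgrades Pl\"{u}cker positivity of $g\cdot\tilde V$ to Lusztig positivity. The remaining ingredients---that the projection is $\GL_n(\mathbb{R})$-equivariant and that every $V\in\PFl{K}{n}^{\ge 0}$ lifts to $\Fl_n^{\ge 0}$---are immediate from the definitions.
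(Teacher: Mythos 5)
Your proposal is correct and follows essentially the same route as the paper: part \ref{matrix_action_plucker} via the Cauchy--Binet identity \eqref{cauchy-binet} (with all minors of $g$ positive, the left-justified minors of a representative nonnegative, and at least one nonzero), and part \ref{matrix_action_lusztig} by lifting through the surjections \eqref{defn_tnn_Fl_surjections} to the complete flag variety, where $\Fl_n^{\ge 0}\subseteq\Fl_n^{\Delta\ge 0}$, part \ref{matrix_action_plucker}, and \cref{converse_complete} finish the argument. Your write-up simply spells out the details the paper leaves implicit.
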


\begin{proof}
Part \ref{matrix_action_plucker} follows from the Cauchy--Binet identity \eqref{cauchy-binet}. For part \ref{matrix_action_lusztig}, by \eqref{defn_tnn_Fl_surjections}, it suffices to prove the result for the complete flag variety (when $K = [n-1]$). Since $\Fl_n^{\ge 0} \subseteq \Fl_n^{\Delta \ge 0}$, this case follows from part \ref{matrix_action_plucker} and \cref{converse_complete}.
\end{proof}

\begin{prop}\label{topology}
Let $K\subseteq [n-1]$.
\begin{enumerate}[label=(\roman*), leftmargin=*, itemsep=2pt]
\item\label{topology_lusztig} $\PFl{K}{n}^{\ge 0}$ is the closure of $\PFl{K}{n}^{>0}$, and $\PFl{K}{n}^{>0}$ is the interior of $\PFl{K}{n}^{\ge 0}$.
\item\label{topology_plucker} $\PFl{K}{n}^{\Delta \ge 0}$ is the closure of $\PFl{K}{n}^{\Delta >0}$, and $\PFl{K}{n}^{\Delta >0}$ is the interior of $\PFl{K}{n}^{\Delta \ge 0}$.
\end{enumerate}

\end{prop}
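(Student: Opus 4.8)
The plan is to treat the two parts in parallel, since they have the same structure: in each, identifying the closure is nearly immediate, and the substance is identifying the interior, for which the key inputs are \cref{identity_perturbation} and \cref{matrix_action}. The first step is to record that $\PFl{K}{n}^{>0}$ and $\PFl{K}{n}^{\Delta >0}$ are open in $\PFl{K}{n}(\mathbb{R})$ and that $\PFl{K}{n}^{\Delta \ge 0}$ is closed. For $\PFl{K}{n}^{>0}$: the set $\GL_n^{>0}$ is cut out in $\GL_n(\mathbb{R})$ by finitely many strict polynomial inequalities, hence is open, and the quotient map $\GL_n(\mathbb{R})\to\GL_n(\mathbb{R})/\P{K}{n}(\mathbb{R})$ is open (quotient of a topological group by a subgroup), so its image $\PFl{K}{n}^{>0}$ is open. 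For the Pl\"{u}cker loci I would use the embedding \eqref{plucker_embedding}: in each factor $\mathbb{P}^{\binom{n}{k_i}-1}(\mathbb{R})$, the set of points having a representative with all coordinates positive is the image of the open positive orthant under the open quotient map from $\mathbb{R}^{\binom{n}{k_i}}\setminus\{0\}$, hence is open, while the set of points having a representative with all coordinates nonnegative is the image of a compact standard simplex (normalize by the coordinate sum), hence is compact and closed. Thus $\PFl{K}{n}^{\Delta >0}$ is the preimage of a product of open sets and $\PFl{K}{n}^{\Delta \ge 0}$ is the preimage of a product of closed sets, giving the claim.

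Next I would dispatch the closure statements. For \ref{topology_lusztig}, the equality $\PFl{K}{n}^{\ge 0}=\overline{\PFl{K}{n}^{>0}}$ is \cref{defn_totally_positive}. For \ref{topology_plucker}: since $\PFl{K}{n}^{\Delta \ge 0}$ is closed and contains $\PFl{K}{n}^{\Delta >0}$, it contains $\overline{\PFl{K}{n}^{\Delta >0}}$; conversely, given $V\in\PFl{K}{n}^{\Delta \ge 0}$ and $f$ as in \cref{identity_perturbation}, \cref{matrix_action}\ref{matrix_action_plucker} gives $f(t)\cdot V\in\PFl{K}{n}^{\Delta >0}$ for all $t>0$, while $f(t)\cdot V\to f(0)\cdot V=V$ as $t\to 0^+$ by continuity of the $\GL_n(\mathbb{R})$-action, so $V\in\overline{\PFl{K}{n}^{\Delta >0}}$.

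It remains to identify the interiors, which I would do identically in both parts. The inclusion $\PFl{K}{n}^{>0}\subseteq\interior(\PFl{K}{n}^{\ge 0})$ (resp.\ $\PFl{K}{n}^{\Delta >0}\subseteq\interior(\PFl{K}{n}^{\Delta \ge 0})$) is immediate from the first paragraph, since an open subset of a set lies in that set's interior. For the reverse inclusion, let $V\in\interior(\PFl{K}{n}^{\ge 0})$ (resp.\ $V\in\interior(\PFl{K}{n}^{\Delta \ge 0})$). The map $t\mapsto f(t)^{-1}\cdot V$ is continuous on $\mathbb{R}_{\ge 0}$ with value $V$ at $t=0$, so for all sufficiently small $t>0$ we have $f(t)^{-1}\cdot V\in\PFl{K}{n}^{\ge 0}$ (resp.\ $\PFl{K}{n}^{\Delta \ge 0}$). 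Fixing such a $t$ and using that $f(t)\in\GL_n^{>0}$, \cref{matrix_action}\ref{matrix_action_lusztig} (resp.\ \ref{matrix_action_plucker}) applied to the flag $f(t)^{-1}\cdot V$ yields $V=f(t)\cdot(f(t)^{-1}\cdot V)\in\PFl{K}{n}^{>0}$ (resp.\ $\PFl{K}{n}^{\Delta >0}$), as wanted.

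I do not expect a serious obstacle. The two points calling for a little care are the topological bookkeeping in the first paragraph --- especially handling the projective rescaling when checking that the Pl\"{u}cker-positive and Pl\"{u}cker-nonnegative loci are open and closed --- and the perturbation identity $V=f(t)\cdot(f(t)^{-1}\cdot V)$ of the third paragraph, which is the conceptual crux: it is exactly what lets \cref{matrix_action} promote \emph{lies in the interior of the nonnegative part} to \emph{lies in the positive part}.
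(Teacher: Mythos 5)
Your proof is correct and follows essentially the same route as the paper: the same perturbation $f$ from \cref{identity_perturbation} combined with \cref{matrix_action} handles both the closure statement in \ref{topology_plucker} and the identification of the interiors, with your direct argument ($V$ in the interior implies $f(t)^{-1}\cdot V$ nonnegative for small $t$, hence $V=f(t)\cdot(f(t)^{-1}\cdot V)$ is positive) being just the contrapositive of the paper's. The only difference is that you spell out the openness/closedness bookkeeping (open quotient map, compact simplex in projective space) that the paper leaves implicit.
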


\begin{proof}
Let $f : \mathbb{R}_{\ge 0} \to \GL_n(\mathbb{R})$ be as in \cref{identity_perturbation}.

\ref{topology_lusztig} By definition, $\PFl{K}{n}^{\ge 0}$ is the closure of $\PFl{K}{n}^{>0}$. Also, $\PFl{K}{n}^{>0}$ is open, so it is contained in the interior of $\PFl{K}{n}^{\ge 0}$. It remains to show that the interior of $\PFl{K}{n}^{\ge 0}$ is contained in $\PFl{K}{n}^{>0}$. To see this, let $V\in\PFl{K}{n}^{\ge 0}\setminus\PFl{K}{n}^{>0}$. We claim that $f(t)^{-1}\cdot V$ is not in $\PFl{K}{n}^{\ge 0}$ for all $t > 0$, whence $V$ is not in the interior of $\PFl{K}{n}^{\ge 0}$. Indeed, if $f(t)^{-1}\cdot V\in\PFl{K}{n}^{\ge 0}$ with $t > 0$, then by \cref{matrix_action}\ref{matrix_action_lusztig} we obtain
$$
V = f(t)\cdot (f(t)^{-1}\cdot V) \in \PFl{K}{n}^{>0},
$$
a contradiction.

\ref{topology_plucker} Note that the closure of $\PFl{K}{n}^{\Delta >0}$ is contained in $\PFl{K}{n}^{\Delta \ge 0}$. Conversely, given $V\in\PFl{K}{n}^{\Delta\ge 0}$, we have $V = \lim_{t\to 0,\hspace*{1pt} t>0}f(t)\cdot V$, and $f(t)\cdot V\in\PFl{K}{n}^{\Delta >0}$ for $t > 0$ by \cref{matrix_action}\ref{matrix_action_plucker}. Therefore $\PFl{K}{n}^{\Delta \ge 0}$ is the closure of $\PFl{K}{n}^{\Delta >0}$. The fact that $\PFl{K}{n}^{\Delta >0}$ is the interior of $\PFl{K}{n}^{\Delta \ge 0}$ follows from a similar argument as in the proof of part \ref{topology_lusztig}.
\end{proof}

\begin{lem}\label{tp_extension}
Let $V\in\Gr_{k,n}^{\Delta >0}$, where $1 \le k \le n-1$.
\begin{enumerate}[label=(\roman*), leftmargin=*, itemsep=2pt]
\item\label{tp_extension_plus} There exists $W\in\Gr_{k+1,n}^{\Delta >0}$ such that $V\subseteq W$.
\item\label{tp_extension_minus} There exists $W\in\Gr_{k-1,n}^{\Delta >0}$ such that $W\subseteq V$.
\end{enumerate}

\end{lem}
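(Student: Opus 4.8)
The two parts are dual to each other via Lemma \ref{perpendicular_pluckers}: if $V \in \Gr_{k,n}^{\Delta > 0}$ and $W \in \Gr_{k-1,n}^{\Delta > 0}$ with $W \subseteq V$, then taking perpendiculars (with respect to the alternating form) reverses the inclusion and sends $\Gr_{k,n}^{\Delta>0}$ to $\Gr_{n-k,n}^{\Delta>0}$, so part \ref{tp_extension_minus} for $\Gr_{k,n}$ is equivalent to part \ref{tp_extension_plus} for $\Gr_{n-k,n}$. So the plan is to prove part \ref{tp_extension_plus} and then deduce part \ref{tp_extension_minus} by applying $\cdot^\perp$; I should double-check the sign bookkeeping, since $\Delta_I(V) = \Delta_{[n]\setminus I}(V^\perp)$ holds on the nose and positivity of \emph{all} Plücker coordinates is preserved, so no signs actually intervene.

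For part \ref{tp_extension_plus}, the idea is to represent $V$ by an $n \times k$ matrix $A$ all of whose left-justified minors (of order $k$) are positive, and then look for a vector $v \in \mathbb{R}^n$ such that the $n \times (k+1)$ matrix $A' = [A \mid v]$ has all left-justified minors of order $k+1$ positive; then $W := \operatorname{colspan}(A')$ works. By Fekete's criterion (\cref{fekete}), since all left-justified $k \times k$ minors of $A'$ (which are just the minors of $A$) are already positive, it suffices to arrange that all $(k+1) \times (k+1)$ minors of $A'$ using \emph{consecutive} rows are positive. Expanding such a minor along the last column, $\Delta_{\{i, i+1, \dots, i+k\},[k+1]}(A') = \sum_{j=0}^{k} (-1)^{k-j} v_{i+j} \, \Delta_{\{i,\dots,i+k\}\setminus\{i+j\},[k]}(A)$; each of the $k$-minors of $A$ appearing here is positive, so the constraint is that a certain alternating sum of the $v_{i+j}$ with positive coefficients be positive, for each of the $n-k$ consecutive windows. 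I will exhibit an explicit $v$ that works — for instance take $v_i = M^i$ for $M$ a large positive real, or $v_i = (-1)^{i} c_i$ for suitably chosen positive $c_i$ making each alternating sum collapse to a dominant positive term, analogous to the perturbation arguments used in the proof of \cref{converse_complete}. One clean choice: pick $v$ so that consecutive windows telescope; since we have $n-k$ linear inequalities in $n$ unknowns and the coefficient matrix is (after sign changes) totally positive-ish, the feasible region is a nonempty open cone, and any point of it gives the desired $W$.

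The main obstacle I anticipate is making the explicit choice of $v$ genuinely work for \emph{all} $n-k$ consecutive windows simultaneously, rather than just verifying feasibility abstractly; the sign pattern $(-1)^{k-j}$ alternates across each window, so a naive monotone choice of $v$ can make some windows negative. I expect the cleanest route is: show the map $v \mapsto \big(\Delta_{\{i,\dots,i+k\},[k+1]}([A\mid v])\big)_{i=1}^{n-k}$ is a surjective linear map $\mathbb{R}^n \to \mathbb{R}^{n-k}$ (its matrix has the positive $k$-minors of $A$ as entries in a banded pattern and has full row rank, e.g.\ by restricting to the coordinates $v_{i}$ for $i$ in an appropriate set and getting a triangular system with nonzero diagonal), hence its image meets the positive orthant; pick any preimage $v$ of a positive vector, set $A' = [A \mid v]$ and $W = \operatorname{colspan}(A')$, invoke \cref{fekete} to conclude all $(k+1)\times(k+1)$ minors of $A'$ are positive, i.e.\ $W \in \Gr_{k+1,n}^{\Delta>0}$, and note $V = \operatorname{colspan}(A) \subseteq \operatorname{colspan}(A') = W$. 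Then part \ref{tp_extension_minus} follows by applying this to $V^\perp \in \Gr_{n-k,n}^{\Delta>0}$ and dualizing back.
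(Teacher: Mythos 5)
Your proposal is correct and follows essentially the same route as the paper: represent $V$ by a matrix with positive maximal minors, append a column so that the consecutive-row $(k+1)\times(k+1)$ minors are positive, invoke \cref{fekete} to conclude all $(k+1)\times(k+1)$ minors are positive, and deduce part \ref{tp_extension_minus} from part \ref{tp_extension_plus} by duality via \cref{perpendicular_pluckers}. The only difference is how the appended column is produced: the paper chooses it greedily and explicitly (first $k$ entries zero, then each later entry taken large enough to make the window ending at that row positive), whereas you argue feasibility abstractly via surjectivity of the linear map $v\mapsto\big(\Delta_{[i,i+k],[k+1]}([A\mid v])\big)_{i=1}^{n-k}$ — both arguments rest on the same observation that the coefficient of $v_{i+k}$ in the $i$th window is a positive $k\times k$ minor of $A$, yielding a triangular system with positive diagonal.
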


\begin{proof}
\ref{tp_extension_plus} Take an $n\times k$ matrix $A$ representing $V$ whose $k\times k$ minors are all positive. Let $B  := \begin{bmatrix}A \hspace*{2pt}|\hspace*{2pt} w\end{bmatrix}$ denote the $n\times (k+1)$ matrix formed by concatenating $A$ and the vector $w\in\mathbb{R}^n$, where we define $w$ as follows. We set $w_1, \dots, w_k := 0$, and for $i = k+1, \dots, n$, we take $w_i > 0$ to be sufficiently large that the minor $\Delta_{[i-k,i],[k+1]}(B)$ is positive. By \cref{fekete}, all $(k+1)\times (k+1)$ minors of $B$ are positive. Therefore we may define $W$ to be the column span of $B$.

\ref{tp_extension_minus} This follows by applying part \ref{tp_extension_plus} to $V^\perp$, using \cref{perpendicular_pluckers}.
\end{proof}

\begin{lem}\label{tnn_counterexample}
Let $V\in\Gr_{k,n}^{\Delta\ge 0}$ and $W\in\Gr_{k+1,n}^{\Delta\ge 0}$ such that $V\subseteq W$. If $e_1 + ce_n\in V$ for some $c\in\mathbb{R}$, then $e_1 \in W$.
\end{lem}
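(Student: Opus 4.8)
The case $c=0$ is immediate, since then $e_1\in V\subseteq W$, so I will assume $c\neq 0$. My plan is to establish the following dichotomy: \emph{either} every nonzero Pl\"ucker coordinate $\Delta_I(V)$ satisfies $\{1,n\}\subseteq I$, \emph{or} every nonzero Pl\"ucker coordinate $\Delta_I(W)$ satisfies $\{1,n\}\subseteq I$. This is enough to finish: if $1$ lies in every index $I$ with $\Delta_I(W)\neq 0$, then in any matrix $B$ representing $W$ the rows indexed by $[2,n]$ span a proper, hence $k$-dimensional, subspace of $\mathbb{R}^{k+1}$; a nonzero vector $y\in\mathbb{R}^{k+1}$ orthogonal to all of those rows cannot be orthogonal to the first row as well (else $y$ is orthogonal to the full row span $\mathbb{R}^{k+1}$), so $By$ is a nonzero multiple of $e_1$ lying in the column span of $B$, giving $e_1\in W$. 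Applying the same observation to $V$ handles the other case, since $e_1\in V\subseteq W$.

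To prove the dichotomy I would represent $V$ by an $n\times k$ matrix $A$ whose first column is $e_1+ce_n$ (possible because $e_1+ce_n\in V$), and represent $W$ by $B=[A\mid b]$ for any $b\in W\setminus V$, so that the column span of $B$ is $V+\spn(b)=W$. Because the first column of $A$ (and of $B$) is supported on $\{1,n\}$, a Laplace expansion of a maximal minor along that column gives two facts. First, $\Delta_I(V)=0$ and $\Delta_I(W)=0$ whenever $I\cap\{1,n\}=\emptyset$, as the relevant column of the submatrix is then zero. Second, for $J'\in\binom{[2,n-1]}{k-1}$ and $J\in\binom{[2,n-1]}{k}$ one obtains the sign identities
\[
\Delta_{J'\cup\{n\}}(V)=(-1)^{k-1}c\,\Delta_{\{1\}\cup J'}(V),\qquad \Delta_{J\cup\{n\}}(W)=(-1)^{k}c\,\Delta_{\{1\}\cup J}(W),
\]
since in each pair both minors equal a $\pm 1$ or $\pm c$ multiple of the same ``core'' minor (rows $J'$, resp. $J$, against the remaining columns), the two cofactor signs differing precisely because row $n$ is the bottom of the ordered index set while row $1$ is the top.

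Finally I would invoke Pl\"ucker-nonnegativity. Any two maximal minors of $A$ are nonnegative multiples of one another, being a common scalar multiple of the corresponding minors of a nonnegative representative of $V$; hence $\Delta_{\{1\}\cup J'}(V)\,\Delta_{J'\cup\{n\}}(V)\ge 0$, which by the first identity reads $(-1)^{k-1}c\,\Delta_{\{1\}\cup J'}(V)^2\ge 0$. So if $\Delta_{\{1\}\cup J'}(V)\neq 0$ for some $J'$, then $(-1)^{k-1}c>0$; the same argument for $W$ shows that if $\Delta_{\{1\}\cup J}(W)\neq 0$ for some $J$, then $(-1)^{k}c>0$. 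Since $(-1)^{k-1}c$ and $(-1)^{k}c$ are negatives of each other, they cannot both be positive, so either $\Delta_{\{1\}\cup J'}(V)=0$ for all $J'\in\binom{[2,n-1]}{k-1}$, or $\Delta_{\{1\}\cup J}(W)=0$ for all $J\in\binom{[2,n-1]}{k}$. In the first case the first identity also forces $\Delta_{J'\cup\{n\}}(V)=0$, and combined with the vanishing on indices disjoint from $\{1,n\}$ and a short check over the remaining index types, every nonzero $\Delta_I(V)$ contains $\{1,n\}$; the second case is symmetric. This yields the dichotomy.

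The one genuinely substantive point — and the main obstacle — is the conceptual step: realizing that the single vector $e_1+ce_n$ lying simultaneously in the nonnegative subspaces $V\subseteq W$ of consecutive dimensions forces the sign of $c$ in two incompatible ways unless $V$ or $W$ degenerates in the coordinates $1$ and $n$. Once that is in hand, the Laplace expansions, the bookkeeping over which Pl\"ucker coordinates can be nonzero, and the elementary fact that an index lying in every basis puts the corresponding standard vector into the subspace are all routine.
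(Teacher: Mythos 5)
Your proof is correct, and its engine is the same as the paper's: expanding minors along the column $e_1+ce_n$ shows that a nonzero order-$k$ Pl\"ucker coordinate of $V$ containing $1$ but not $n$ forces $(-1)^{k-1}c>0$, while a nonzero order-$(k+1)$ coordinate of $W$ containing $1$ but not $n$ forces $(-1)^{k}c>0$, and Pl\"ucker nonnegativity makes these incompatible. The paper packages this via the reduced column echelon form of $V$ and a complementary vector shown to be a multiple of $e_n$, whereas you phrase it as a symmetric dichotomy and finish with the (correct) observation that an index lying in every nonzero Pl\"ucker coordinate places the corresponding unit vector in the subspace; this is the same argument in substance.
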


\begin{proof}
If $e_1\in V$, then $e_1\in W$. Now suppose that $e_1\notin V$ and $e_1 + ce_n\in V$ for some $c\in\mathbb{R}$, so that $c\neq 0$ and $e_n\notin V$. Let $A$ denote the $n\times k$ matrix representing $V$ in reduced column echelon form. Let $I\in\binom{[n]}{k}$ index the rows containing the pivot $1$'s of $A$; equivalently, $I$ is lexicographically minimal such that $\Delta_I(V)\neq 0$. Since $e_1 + ce_n\in V$ and $e_n\notin V$, we have $1\in I$ and $n\notin I$, and the first column of $A$ is $e_1 + ce_n$. Therefore $\Delta_I(A) = 1$ and $\Delta_{(I\setminus\{1\})\cup\{n\}}(A) = (-1)^{k-1}c$. Since $V$ is Pl\"{u}cker nonnegative, we get that $(-1)^{k-1}c > 0$.

Now take $w\in W\setminus V$ such that $w_i = 0$ for all $i\in I$. Let $B := \begin{bmatrix}A \hspace*{2pt}|\hspace*{2pt} w\end{bmatrix}$ denote the $n\times (k+1)$ matrix representing $W$, formed by concatenating $A$ and $w$. Then for $i\in [n-1]\setminus I$, there exists $\epsilon\in\{1,-1\}$ such that $\Delta_{I\cup\{i\}}(B) = \epsilon w_i$ and $\Delta_{(I\setminus\{1\})\cup\{i,n\}}(B) = \epsilon(-1)^kcw_i$. Since $W$ is Pl\"{u}cker nonnegative and $(-1)^{k-1}c > 0$, we get that $w_i = 0$. Therefore $w$ is a nonzero scalar multiple of $e_n$. Since $e_1 + ce_n\in W$, we obtain $e_1\in W$.
\end{proof}

\begin{proof}[Proof of \cref{converse}]
\ref{converse_tp} $\Leftrightarrow$ \ref{converse_tnn}: This follows from \cref{topology}.

\ref{converse_consecutive} $\Rightarrow$ \ref{converse_tp}: Suppose that $K = [k,l]$. Recall that $\PFl{K}{n}^{>0}\subseteq\PFl{K}{n}^{\Delta >0}$. Conversely, we must show that given $V = (V_k, \dots, V_l)\in\PFl{K}{n}^{\Delta >0}$, we have $V\in\PFl{K}{n}^{>0}$. By repeatedly applying \cref{tp_extension}, there exist $V_i\in\Gr_{i,n}^{\Delta >0}$ for $i = l+1, \dots, n$ and $i = k-1, \dots, 1$ such that $V_1 \subset \cdots \subset V_{n-1}$. Let $W := (V_1, \dots, V_{n-1})\in\Fl_n(\mathbb{R})$. Then $W\in\Fl_n^{\Delta >0}$, so $W\in\Fl_n^{>0}$ by \cref{converse_complete}. Then by \eqref{defn_tnn_Fl_surjections}, we get $V\in\PFl{K}{n}^{>0}$.

\ref{converse_tnn} $\Rightarrow$ \ref{converse_consecutive}: We prove the contrapositive. Suppose that $K$ does not consist of consecutive integers, so that there exist consecutive elements $k<l$ of $K$ with $l-k \ge 2$. Define the element $V = (V_i)_{i\in K}$ of $\PFl{K}{n}(\mathbb{R})$ as follows:
$$
V_i := 
\begin{cases}
\spn(e_5, e_6, \dots, e_{i+4}), & \text{ if $i < k$}; \\
\spn(e_1 + e_4, e_5, e_6, \dots, e_{k+3}), & \text{ if $i = k$}; \\
\spn(e_1 + e_4, e_2, e_3, e_5, e_6, \dots, e_{i+1}), & \text{ if $i \ge l$}.
\end{cases}
$$
That is, $V$ is represented by the $n\times (n-1)$ matrix
$$
A := \begin{bmatrix}
0 & (-1)^{k-1}B & 0 \\
\I_{k-1} & 0 & 0 \\
0 & 0 & \I_{n-k-3}
\end{bmatrix}, \quad \text{ where } \quad B := \begin{bmatrix}
1 & 0 & 0 \\
0 & 1 & 0 \\
0 & 0 & 1 \\
1 & 0 & 0
\end{bmatrix}.
$$
Note that all left-justified minors of $A$ are nonnegative, except for a certain minor of order $k+1$. Since $k+1\notin K$, we get that $V\in\PFl{K}{n}^{\Delta\ge 0}$.

We claim that $V\notin\PFl{K}{n}^{\ge 0}$, which implies that $\PFl{K}{n}^{\ge 0} \neq \PFl{K}{n}^{\Delta \ge 0}$. Indeed, suppose otherwise that $V\in\PFl{K}{n}^{\ge 0}$. Then by \eqref{defn_tnn_Fl_surjections}, we can extend $V$ to a complete flag $(V_1, \dots, V_{n-1})\in\Fl_n^{\ge 0}$. For $1 \le i \le n-1$, define $W_i := V_i\cap\spn(e_1, e_2, e_3, e_4)$. Let $d_i := \dim(W_i)$, so that $W_i\in\Gr_{d_i,4}^{\Delta\ge 0}$ by \cref{tnn_restriction}. Note that $W_k = \spn(e_1 + e_4)$ and $W_l = \spn(e_1 + e_4, e_2, e_3)$. Since the sequence $d_k, d_{k+1}, \dots, d_l$ increases by $0$ or $1$ at each step, and $d_k = 1$ and $d_l = 3$, there exists $j\in [k,l]$ such that $d_j = 2$. Applying \cref{tnn_counterexample} to $W_k$ and $W_j$, we get $e_1\in W_j$. Since $W_j\subset W_l$, this implies $e_1\in W_l$, a contradiction.
\end{proof}

\section{Cyclic symmetry}\label{sec_cyclic}

\noindent In this section, we prove \cref{cyclic}.
\begin{proof}[Proof of \cref{cyclic}]
We claim that it suffices to construct $W\in\PFl{K}{n}^{\ge 0}$ such that $\sigma_\epsilon(W)\notin\PFl{K}{n}^{\ge 0}$. Indeed, let $f : \mathbb{R}_{\ge 0} \to \GL_n(\mathbb{R})$ be as in \cref{identity_perturbation}, so that by \cref{matrix_action}\ref{matrix_action_lusztig}, we have $f(t)\cdot W\in\PFl{K}{n}^{>0}$ for all $t > 0$. If $\sigma_\epsilon(f(t)\cdot W)\in\PFl{K}{n}^{\ge 0}$ for all $t > 0$, then taking $t \to 0$ we obtain $\sigma_\epsilon(W)\in\PFl{K}{n}^{\ge 0}$, a contradiction. Therefore there exists $t > 0$ such that $\sigma_\epsilon(f(t)\cdot W)\notin\PFl{K}{n}^{\ge 0}$, whence we may take $V := f(t)\cdot W$.

Now we construct such a $W = (W_i)_{i\in K} \in \PFl{K}{n}(\mathbb{R})$. Fix any two elements $k < l$ of $K$. We set
$$
W_i := 
\begin{cases}
\spn(e_3, e_4, \dots, e_{i+2}), & \text{ if $i < k$}; \\
\spn(e_1 + e_2, e_3, e_4, \dots, e_{i+1}), & \text{ if $i \ge k$}.
\end{cases}
$$
That is, $W$ is represented by the $n\times (n-1)$ matrix
$$
A := \begin{bmatrix}
0 & (-1)^{k-1} & 0 \\
0 & (-1)^{k-1} & 0 \\
\I_{k-1} & 0 & 0 \\
0 & 0 & \I_{n-k-1}
\end{bmatrix}.
$$
Note that all left-justified minors of $A$ are nonnegative, so $A$ represents an element of $\Fl_n^{\Delta\ge 0}$. By \cref{converse} we have $\Fl_n^{\Delta\ge 0} = \Fl_n^{\ge 0}$, so by \eqref{defn_tnn_Fl_surjections}, we get $W\in\PFl{K}{n}^{\ge 0}$.

Let $X = (X_i)_{i\in K}$ denote the left cyclic shift $\sigma_\epsilon(W)$. Note that
$$
X_k = \spn((-1)^{\epsilon-1}e_n + e_1, e_2, \dots, e_k) \quad \text{ and } \quad X_l = \spn((-1)^{\epsilon-1}e_n + e_1, e_2, \dots, e_l).
$$
Now proceed by contradiction and suppose that $X\in\PFl{K}{n}^{\ge 0}$. Then by \eqref{defn_tnn_Fl_surjections}, we can extend $X$ to a complete flag $(X_1, \dots, X_{n-1})\in\Fl_n^{\ge 0}$. Applying \cref{tnn_counterexample} to $X_k$ and $X_{k+1}$, we get $e_1\in X_{k+1}$. Since $X_{k+1}\subseteq X_l$, this implies $e_1\in X_l$, a contradiction.
\end{proof}

\section{Cell decomposition vs.\ matroid decomposition}\label{sec_decompositions}

\noindent In this section, we prove \cref{decompositions}. We begin by recalling some background in \cref{sec_decompositions_background}. We then give two proofs of the forward direction of \cref{decompositions} in \cref{sec_decompositions_forward}, and prove the reverse direction in \cref{sec_decompositions_reverse}. Throughout this section, we fix $n\in\mathbb{N}$, and let $W$ denote the symmetric group $\mathfrak{S}_n$ of all permutations of $[n]$. Also, $J$ and $K$ will denote complementary subsets of $[n-1]$.

\subsection{Background on Coxeter combinatorics}\label{sec_decompositions_background}
We recall some background on the combinatorics of the Coxeter group $W = \mathfrak{S}_n$; we refer to \cite{bjorner_brenti05} for further details.
\begin{defn}[{\cite[Chapter 2]{bjorner_brenti05}}]\label{defn_bruhat_order}
For $1 \le i \le n-1$, let $s_i := (i \hspace*{8pt} i+1) \in W$ be the simple transposition which exchanges $i$ and $i+1$, and let $e\in W$ denote the identity permutation. Given $w\in W$, a {\itshape reduced word} $\mathbf{w}$ for $w$ is a word in $s_1, \dots, s_{n-1}$ of minimal length whose product is $w$. Each reduced word for $w$ has the same number of letters, called the {\itshape length $\ell(w)$} of $w$, which is equal to the number of inversions of $w$. Any two reduced words for $w$ are related by a sequence of moves of the following form:
\begin{enumerate}[label={(M\arabic*)}, leftmargin=36pt, itemsep=2pt]
\item\label{move_commutation} $s_is_j = s_js_i$ for $1 \le i,j \le n-1$ with $|i-j| \ge 2$; and
\item\label{move_braid} $s_is_{i+1}s_i = s_{i+1}s_is_{i+1}$ for $1 \le i \le n-2$.
\end{enumerate}
In particular, if $s_i$ appears in some reduced word for $w$, then it appears in every reduced word for $w$.

The {\itshape (strong) Bruhat order $\le$} on $W$ is defined as follows: $v \le w$ if and only if for some (or equivalently, for every) reduced word $\mathbf{w}$ for $w$, there exists a reduced word $\mathbf{v}$ for $v$ which is a subword of $\mathbf{w}$. The Bruhat order is graded with rank function $\ell$. The Bruhat order on $W = \mathfrak{S}_3$ is shown in \cref{figure_S3}.
\end{defn}

\begin{figure}[ht]
\begin{center}
$$
\begin{tikzpicture}[baseline=(current bounding box.center),scale=1.0]
\pgfmathsetmacro{\s}{1.0};
\pgfmathsetmacro{\hd}{1.20};
\pgfmathsetmacro{\vd}{0.96};
\pgfmathsetmacro{\is}{1.68};
\node[inner sep=\is](123)at(0,0){\scalebox{\s}{$123$}};
\node[inner sep=\is](213)at($(123)+(-\hd,\vd)$){\scalebox{\s}{$213$}};
\node[inner sep=\is](132)at($(123)+(\hd,\vd)$){\scalebox{\s}{$132$}};
\node[inner sep=\is](312)at($(213)+(0,\vd)$){\scalebox{\s}{$312$}};
\node[inner sep=\is](231)at($(132)+(0,\vd)$){\scalebox{\s}{$231$}};
\node[inner sep=\is](321)at($(312)+(\hd,\vd)$){\scalebox{\s}{$321$}};
\path[semithick](123)edge(213) edge(132) (213)edge(312) edge(231) (132)edge(312) edge(231) (312)edge(321) (231)edge(321);
\end{tikzpicture}
$$
\caption{The Hasse diagram of the Bruhat order on $W = \mathfrak{S}_3$.}
\label{figure_S3}
\end{center}
\end{figure}
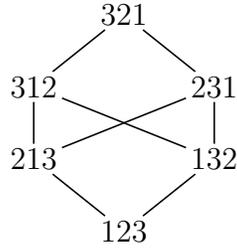

\begin{eg}\label{eg_bruhat_order}
Let $w := 5214763 \in W = \mathfrak{S}_7$. Then $\ell(w) = 9$, and a reduced word for $w$ is $\mathbf{w} = s_1s_3s_4s_3s_2s_1s_5s_6s_5$.
\end{eg}

We will need the following property of reduced words:
\begin{lem}[{\cite[Corollary 1.4.6(ii)]{bjorner_brenti05}}]\label{word_ending}
Let $w\in W$ and $1 \le i \le n-1$. If $\ell(ws_i) < \ell(w)$, then $w$ has a reduced word which ends in $s_i$.
\end{lem}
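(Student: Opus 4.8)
The plan is to deduce the statement immediately from the basic fact that right multiplication by a simple transposition changes the length by exactly one. First I would recall that for any $w\in W$ and any $1\le i\le n-1$ we have $\ell(ws_i) = \ell(w)\pm 1$: in one-line notation $ws_i$ is obtained from $w$ by exchanging the entries in positions $i$ and $i+1$, so the number of inversions of $ws_i$ differs from that of $w$ by exactly $1$, the sign being $+$ if $w(i)<w(i+1)$ and $-$ if $w(i)>w(i+1)$. (Alternatively, this is the standard length-parity input from the exchange condition for Coxeter groups.)

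Given the hypothesis $\ell(ws_i)<\ell(w)$, this forces $\ell(ws_i) = \ell(w)-1$. Set $v:=ws_i$, so that $\ell(v)=\ell(w)-1$, and fix a reduced word $\mathbf{v} = s_{j_1}\cdots s_{j_{\ell(w)-1}}$ for $v$. Appending $s_i$ produces the word $\mathbf{v}s_i = s_{j_1}\cdots s_{j_{\ell(w)-1}}s_i$, whose product is $vs_i = ws_is_i = w$ and which has exactly $\ell(w)$ letters. Since $\ell(w)$ is by definition the minimal length of a word in $s_1,\dots,s_{n-1}$ whose product is $w$, the word $\mathbf{v}s_i$ must be reduced. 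Hence $\mathbf{v}s_i$ is a reduced word for $w$ ending in $s_i$, as desired.

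There is essentially no obstacle here beyond the length-$\pm 1$ observation, which is precisely the standard exchange-condition fact recorded in \cite[Chapter 2]{bjorner_brenti05}; in particular the statement is exactly \cite[Corollary 1.4.6(ii)]{bjorner_brenti05}, so one could also simply cite that reference. I would phrase the write-up directly as above rather than routing through the subword characterization of the Bruhat order, since the direct argument is shortest and self-contained within the notation of \cref{defn_bruhat_order}.
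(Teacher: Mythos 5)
Your proof is correct, and the paper itself offers no argument here: it simply cites \cite[Corollary 1.4.6(ii)]{bjorner_brenti05}, so your write-up supplies the standard proof of that cited fact, valid verbatim in the paper's setting $W=\mathfrak{S}_n$ where $\ell$ is the inversion count. One small streamlining: you do not even need the $\ell(ws_i)=\ell(w)\pm 1$ observation, since if $\ell(ws_i)<\ell(w)$ then appending $s_i$ to a reduced word for $ws_i$ gives a word for $w$ of length $\ell(ws_i)+1\le\ell(w)$, and minimality of $\ell(w)$ forces equality, so the word is automatically reduced and ends in $s_i$.
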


We define parabolic subgroups and quotients of $W$.
\begin{defn}[{\cite[Section 2.4]{bjorner_brenti05}}]\label{defn_parabolic}
Given $J\subseteq [n-1]$, let $W_J := \langle s_j : j\in J\rangle$ be the subgroup of $W$ generated by the simple transpositions indexed by $J$, called a {\itshape parabolic subgroup}. Equivalently, $W_J$ consists of the elements of $W$ which setwise fix the intervals $[1,k_1], [k_1 + 1, k_2], \dots, [k_l + 1,n]$, where $[n-1]\setminus J = \{k_1 < \cdots < k_l\}$.

Let $W^J$ denote the set of minimal-length coset representatives of the parabolic quotient $W/W_J$. Explicitly, we have
$$
W^J = \{w\in\mathfrak{S}_n : w(j) < w(j+1) \text{ for all } j\in J\}.
$$
Each $w\in W$ has a unique factorization $w = w^Jw_J$ such that $w^J\in W^J$ and $w_J\in W_J$; this factorization is length-additive. In particular, $w^J$ is the minimal-length coset representative of $w$ modulo $W_J$.
\end{defn}

\begin{eg}\label{eg_parabolic_quotient}
Let $w := 5214763 \in W = \mathfrak{S}_7$, as in \cref{eg_bruhat_order}, and let $J := \{1,2,4,6\}$. Then $w^J = 1254736 = s_3s_4s_3s_6s_5$ and $w_J = 3214576 = s_1s_2s_1s_6$.
\end{eg}

We will need the following property of the parabolic factorization:
\begin{lem}[{\cite[Proposition 2.5.1]{bjorner_brenti05}}]\label{parabolic_order}
Let $J\subseteq [n-1]$, and let $v\le w$ in $W$. Then $v^J \le w^J$.
\end{lem}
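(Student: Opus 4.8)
The statement to prove is \cref{parabolic_order}: if $v \le w$ in $W = \mathfrak{S}_n$ and $J \subseteq [n-1]$, then $v^J \le w^J$.

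\medskip

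\textbf{Proof proposal.}

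The plan is to induct on $\ell(w)$, using the characterization of the Bruhat order via subwords of reduced words together with \cref{word_ending}. The base case $w = e$ is immediate since then $v = e$ and $v^J = e = w^J$. For the inductive step, suppose $v \le w$ with $\ell(w) \ge 1$. I would split into two cases according to whether the parabolic factorization of $w$ has $w_J = e$ or not.

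First, suppose $w_J \ne e$, so some $s_i$ with $i \in J$ satisfies $\ell(ws_i) < \ell(w)$ and we may arrange (via \cref{word_ending} applied within the length-additive factorization $w = w^J w_J$) that $ws_i$ still has $(ws_i)^J = w^J$. The point is that right-multiplying by a generator $s_i$ with $i \in J$ does not change the minimal coset representative: $(ws_i)^J = w^J$ whenever $i \in J$. Now I need to compare $v^J$ with $(ws_i)^J = w^J$. If $v \le ws_i$ we are done by the inductive hypothesis applied to the pair $v \le ws_i$ (which has strictly smaller length on the right). Otherwise, the standard "lifting property" of the Bruhat order (a basic fact about Coxeter groups, essentially \cite[Proposition 2.2.7]{bjorner_brenti05}, though here I should only invoke tools already available) tells me that from $v \le w$ and $\ell(ws_i) < \ell(w)$ we get $vs_i \le w$, and in fact $vs_i \le ws_i$ or $v \le ws_i$. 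In the remaining subcase $vs_i \le ws_i$ with $\ell(vs_i) < \ell(v)$, apply the inductive hypothesis to get $(vs_i)^J \le (ws_i)^J = w^J$; but $(vs_i)^J = v^J$ since $i \in J$, so $v^J \le w^J$ as desired.

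Second, suppose $w_J = e$, i.e.\ $w = w^J \in W^J$. Here I would instead peel off a generator on the left, or argue directly: pick $i \in [n-1]$ with $\ell(s_i w) < \ell(w)$; since $w \in W^J$ one checks this forces $i \notin J$ is not automatic, so I would more carefully use that $w \in W^J$ means $w(j) < w(j+1)$ for $j \in J$. Actually the cleanest route in this case is: since $v \le w$, fix a reduced word $\mathbf w$ for $w$ and a subword $\mathbf v$ for $v$; then $v^J$ is obtained from $v$ by a sequence of right multiplications by generators in $J$ that decrease length, and I want to show each such move can be matched by a move keeping us $\le w$. This is precisely the subword-juggling at the heart of the lifting property, so rather than re-deriving it I would cite \cite[Proposition 2.2.7 or Corollary 2.5.2]{bjorner_brenti05} — indeed \cref{parabolic_order} itself is \cite[Proposition 2.5.1]{bjorner_brenti05}, so the honest proof is simply to invoke that reference, with the inductive lifting argument above recorded as the idea.

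\medskip

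\textbf{Main obstacle.} The crux is the lifting property of the Bruhat order — the assertion that from $v \le w$ and $\ell(w s_i) < \ell(w)$ one can control $v s_i$ relative to $w s_i$. Everything else is bookkeeping: that $i \in J \implies (x s_i)^J = x^J$, and that the factorization $w = w^J w_J$ is length-additive (\cref{defn_parabolic}). Since this is a foundational fact whose proof is orthogonal to the rest of the paper, I expect the write-up to consist of a one-line citation to \cite[Proposition 2.5.1]{bjorner_brenti05} rather than a reproduction of the Coxeter-theoretic argument.
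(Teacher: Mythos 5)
Your conclusion matches the paper exactly: the paper gives no proof of \cref{parabolic_order}, stating it as a direct citation of \cite[Proposition 2.5.1]{bjorner_brenti05}, which is precisely where your write-up lands, and your inductive sketch via the lifting property is the standard textbook argument behind that reference. Note only that your second case ($w_J = e$, i.e.\ $w = w^J \in W^J$) needs no subword juggling at all, since the length-additive factorization $v = v^J v_J$ gives $v^J \le v \le w = w^J$ immediately.
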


We now recall the {\itshape Demazure product} and {\itshape downwards Demazure product}, appearing in work of He \cite[Lemma 3.3]{he07} and He and Lu \cite[Appendix A]{he_lu11}. We refer to \cite[Section 2.1]{he_lam15} for further discussion and references.
\begin{defn}[{\cite[Section 1.3]{he09}}]\label{defn_demazure}
There exist binary operations $\ast$ and $\triangleleft$ on $W$ defined by
$$
v\ast w := \max\{vx : x \le w\} \quad \text{ and } \quad v\triangleleft w := \min\{vx : x \le w\}
$$
for all $v,w\in W$. Equivalently,
$$
v \ast (s_{i_1} \cdots s_{i_l}) = (\cdots (v \ast s_{i_1}) \ast \cdots ) \ast s_{i_l} \quad \text{ and } \quad v \triangleleft (s_{i_1} \cdots s_{i_l}) = (\cdots (v \triangleleft s_{i_1}) \triangleleft \cdots ) \triangleleft s_{i_l}
$$
for all $v\in W$ and reduced words $s_{i_1} \cdots s_{i_l}\in W$, where
$$
v\ast s_i = \begin{cases}
vs_i, & \text{ if $\ell(vs_i) > \ell(v)$}; \\
v, & \text{ if $\ell(vs_i) < \ell(v)$}
\end{cases} \quad \text{ and } \quad
v\triangleleft s_i = \begin{cases}
v, & \text{ if $\ell(vs_i) > \ell(v)$}; \\
vs_i, & \text{ if $\ell(vs_i) < \ell(v)$}
\end{cases}
$$
for all $1 \le i \le n-1$. We call $\ast$ the {\itshape Demazure product} and $\triangleleft$ the {\itshape downwards Demazure product}.\footnote{Our operation $\triangleleft$ is the `mirror image' of He's $\triangleright$. We also caution that the symbol $\triangleleft$ is used in \cite{bjorner_brenti05} with a different meaning, namely, to denote a cover relation in the Bruhat order.}
\end{defn}

\begin{eg}\label{eg_demazure}
We have $s_1s_2s_3 \ast s_2s_3s_2 = s_1s_2s_3s_2$ and $s_1s_2s_3 \triangleleft s_2s_3s_2 = s_1$.
\end{eg}

We will need the following property of the Demazure and downwards Demazure products:
\begin{lem}[{\cite[Corollary 1 and Lemma 2]{he09}}]\label{demazure_properites}
Let $v\le w$ in $W$. Then for all $x\in W$, we have $v\ast x \le w\ast x$ and $v\triangleleft x \le w\triangleleft x$.
\end{lem}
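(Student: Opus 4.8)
The statement to prove is: if $v \le w$ in $W$, then $v \ast x \le w \ast x$ and $v \triangleleft x \le w \triangleleft x$ for all $x \in W$. Since the paper cites this to He \cite{he09}, the intended proof is presumably short and self-contained; I would reconstruct it as follows.

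The plan is to reduce to the case $x = s_i$ a simple transposition, and then induct on $\ell(x)$ using the recursive characterizations of $\ast$ and $\triangleleft$ given in \cref{defn_demazure}. Fix a reduced word $x = s_{i_1} \cdots s_{i_m}$. If I can show that $v \le w$ implies $v \ast s_i \le w \ast s_i$ and $v \triangleleft s_i \le w \triangleleft s_i$ for every $i$, then applying this $m$ times along the word — first replacing $(v,w)$ by $(v \ast s_{i_1}, w \ast s_{i_1})$, which still satisfies $\le$, and so on — yields $v \ast x \le w \ast x$, and similarly for $\triangleleft$. So everything comes down to the one-step claim.

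For the one-step claim with $\ast$: given $v \le w$, I want $v \ast s_i \le w \ast s_i$. Split into cases according to whether $\ell(vs_i) > \ell(v)$ or $< \ell(v)$, and likewise for $w$. In the case $\ell(vs_i)>\ell(v)$ and $\ell(ws_i)>\ell(w)$, we need $vs_i \le ws_i$; this is the standard "lifting property" of the Bruhat order (see \cite[Proposition 2.2.7]{bjorner_brenti05} or the $Z$-property / subword characterization): if $v \le w$ and $ws_i > w$ then $v \le ws_i$, and combined with $vs_i > v$ this gives $vs_i \le ws_i$. In the case $\ell(vs_i)<\ell(v)$ and $\ell(ws_i)>\ell(w)$: then $v \ast s_i = v \le w < ws_i = w \ast s_i$. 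In the case $\ell(vs_i)>\ell(v)$ and $\ell(ws_i)<\ell(w)$: then $v \ast s_i = vs_i$ and $w \ast s_i = w$; since $w s_i < w$, the lifting property gives $v \le ws_i$, and then either $vs_i \le w$ directly, or one uses that $vs_i \le \max(v s_i, \ldots)$ — more cleanly, $vs_i = v \ast s_i \le w$ follows because $v \le w$ and $w$ is a maximum over $\{w x : x \le s_i\} = \{w, ws_i\}$, both of which are $\ge v$ in the relevant configuration; I would phrase this via the definition $v \ast s_i = \max\{v, vs_i\}$ and $w \ast s_i = w = \max\{w, ws_i\}$, noting $v \le w$ and $vs_i \le w$ (the latter from the lifting property applied to $ws_i < w$, giving $vs_i \le w$ whenever $v \le w$). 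In the case $\ell(vs_i)<\ell(v)$, $\ell(ws_i)<\ell(w)$: $v \ast s_i = v \le w = w \ast s_i$. The argument for $\triangleleft$ is entirely dual: $v \triangleleft s_i = \min\{v, vs_i\}$ and one runs the same four cases using the mirror form of the lifting property.

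The main obstacle is just getting the case analysis for $\ast$ completely right — in particular the mixed case where $s_i$ is a descent for $w$ but an ascent for $v$, where one genuinely needs the lifting/exchange property rather than mere monotonicity of length. Once the single-transposition statement is nailed down, the extension to arbitrary $x$ is a routine induction on word length as sketched above, and the two claims ($\ast$ and $\triangleleft$) are handled symmetrically. I would present the lifting property as a cited black box from \cite{bjorner_brenti05} rather than reprove it.
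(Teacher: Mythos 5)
Your proof is correct, but note that the paper itself gives no argument for this lemma: it is quoted verbatim from He \cite{he09}, so there is no in-paper proof to match. Your reconstruction is the standard one and is sound: reduce to a single simple transposition via the recursive characterization in \cref{defn_demazure} (setting $v_0=v$, $v_j=v_{j-1}\ast s_{i_j}$ along a reduced word for $x$, and likewise for $w$ and for $\triangleleft$), and settle the one-step claim by a four-case analysis using the lifting property \cite[Proposition 2.2.7]{bjorner_brenti05}. The only wobble is in the mixed case $\ell(vs_i)>\ell(v)$, $\ell(ws_i)<\ell(w)$, where your first sentence invokes lifting to get $v\le ws_i$ --- true here (since $s_i$ is a descent of $w$ but not of $v$), but not what you need --- before you correctly pin down the relevant form: if $v\le w$ and $ws_i<w$, then $vs_i\le w$ (trivially when $vs_i<v$, and by lifting when $vs_i>v$). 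Similarly, in the first case the nontrivial content is lifting applied to the pair $(v,ws_i)$ with $s_i$ a descent of $ws_i$, not the trivial inequality $v\le ws_i$; in a write-up you should state the lifting property once in the precise form you use and point each case at it. The $\triangleleft$ statement is indeed dual and your case check goes through. So your proposal supplies a short self-contained proof of a fact the paper only cites, at the modest cost of importing the lifting property as a black box from \cite{bjorner_brenti05}.
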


\subsection{Background on the cell and matroid decompositions}\label{sec_decompositions_algebra}
We recall the cell decomposition and matroid decomposition of $\PFl{K}{n}^{\ge 0}$, though we will mainly work with \cref{decompositions_equality} and \cref{decompositions_grassmannian}, rather than the definitions. We refer to \cite[Sections 6--7]{tsukerman_williams15} for further details.
\begin{defn}\label{defn_cell_decomposition}
Let $n\in\mathbb{N}$, and let $\B_n(\mathbb{R})$ and $\Bminus_n(\mathbb{R})$ denote the subgroups of $\GL_n(\mathbb{R})$ of upper-triangular and lower-triangular matrices, respectively. For $w\in W$, let $\mathring{w}\in\GL_n(\mathbb{R})$ be any signed permutation matrix corresponding to $w$, i.e., $\mathring{w}_{i,j} = \pm\delta_{i,w(j)}$ for $1 \le i,j \le n$. Given $v,w\in W$ such that $v\le w$, we define the {\itshape (totally nonnegative) Richardson cell}
$$
\cell{v}{w} := (\Bminus_n(\mathbb{R})\cdot\mathring{v}) \cap (\B_n(\mathbb{R})\cdot\mathring{w}) \cap \Fl_n^{\ge 0},
$$
which is the intersection inside $\Fl_n^{\ge 0}$ of the opposite Schubert cell indexed by $v$ and the Schubert cell indexed by $w$.

Now let $J$ and $K$ be complementary subsets of $[n-1]$. Given $v\in W$ and $w\in W^J$ such that $v \le w$, we define the {\itshape (totally nonnegative) projected Richardson cell} $\cell[K]{v}{w} \subseteq \PFl{K}{n}^{\ge 0}$ to be the image of $\cell{v}{w} \subseteq \Fl_n^{\ge 0}$ under the projection map \eqref{defn_tnn_Fl_surjections}. Rietsch \cite{rietsch99,rietsch06b} showed that $\cell[K]{v}{w}$ is homeomorphic to an open ball of dimension $\ell(w) - \ell(v)$. We have the cell decomposition
$$
\PFl{K}{n}^{\ge 0} = \bigsqcup_{\substack{v\in W,\hspace*{2pt} w\in W^J,\\ v\le w}}\cell[K]{v}{w},
$$
where $\PFl{K}{n}^{>0}$ is the unique cell of maximum dimension.
\end{defn}

\begin{rmk}\label{cell_decomposition_remark}
Our definition of the cell decomposition of $\PFl{K}{n}^{\ge 0}$ is different from, but equivalent to, the definition of Rietsch \cite[Section 6]{rietsch06b}. We refer to \cite[Appendix]{he_lam15} and \cite[Remark 4.9]{galashin_karp_lam22} for further discussion.
\end{rmk}

\begin{defn}\label{defn_matroid_decomposition}
Let $K\subseteq [n-1]$. Given a tuple $M = (M_k)_{k\in K}$, where $M_k\subseteq\binom{[n]}{k}$ for $k\in K$, we define
$$
S_M := \{V\in\PFl{K}{n}^{\ge 0} : \text{for all $k\in K$ and $I\in\textstyle\binom{[n]}{k}$, we have } \Delta_I(V) \neq 0 \Leftrightarrow I\in M_k\}.
$$
If $S_M$ is nonempty, we call it a {\itshape (totally nonnegative) matroid stratum}. The {\itshape matroid decomposition} (or {\itshape Gelfand--Serganova decomposition}) of $\PFl{K}{n}^{\ge 0}$ is its decomposition into matroid strata; equivalently, it is the common refinement of the decompositions
$$
\PFl{K}{n}^{\ge 0} = \{V\in\PFl{K}{n}^{\ge 0} : \Delta_I(V)\neq 0\} \sqcup \{V\in\PFl{K}{n}^{\ge 0} : \Delta_I(V)=0\}
$$
for all Pl\"{u}cker coordinates $\Delta_I$.
\end{defn}

\begin{rmk}\label{generalized_plucker_remark}
There is a different, but equivalent, way to define Pl\"{u}cker positivity and the matroid decomposition for partial flag varieties $\PFl{K}{n}(\mathbb{R})$, using the {\itshape generalized Pl\"{u}cker coordinates} of Gelfand and Serganova \cite{gel'fand_serganova87}, rather than the Pl\"{u}cker coordinates of \cref{defn_Fl}. Namely, let $K = \{k_1 < \cdots < k_l\}\subseteq [n-1]$. Given a tuple $I = (I_{k_1}, \dots, I_{k_l})$ such that $I_{k_1} \subset \cdots \subset I_{k_l}$ and $I_k\in\binom{[n]}{k}$ for $k\in K$, define the {\itshape generalized Pl\"{u}cker coordinate}
$$
\Delta_I := \Delta_{I_{k_1}}\Delta_{I_{k_2}} \cdots \Delta_{I_{k_l}}.
$$
Then $V\in\PFl{K}{n}(\mathbb{R})$ is totally positive (respectively, totally nonnegative) if and only if all its generalized Pl\"{u}cker coordinates are positive (respectively, nonnegative). Also, the matroid decomposition of $\PFl{K}{n}^{\ge 0}$ is the common refinement of the decompositions
$$
\PFl{K}{n}^{\ge 0} = \{V\in\PFl{K}{n}^{\ge 0} : \Delta_I(V)\neq 0\} \sqcup \{V\in\PFl{K}{n}^{\ge 0} : \Delta_I(V)=0\}
$$
for all generalized Pl\"{u}cker coordinates $\Delta_I$. These results follow from \cite[Section 9.1]{gel'fand_serganova87} (cf.\ \cite[Chapter 1]{borovik_gelfand_white03}).
\end{rmk}

\begin{rmk}\label{matroid_decomposition_remark}
We use the name {\itshape matroid decomposition} because if $S_M$ is a matroid stratum of $\PFl{K}{n}^{\ge 0}$, then each $M_k$ is a {\itshape (representable) matroid} of rank $k$ on the ground set $[n]$ (in fact, $M_k$ is a {\itshape positroid} \cite{postnikov07,postnikov_speyer_williams09}). Moreover, $M$ itself is a {\itshape (representable) Coxeter matroid}; see \cite[Section 9.1]{gel'fand_serganova87} and \cite[Section 1.7]{borovik_gelfand_white03}.
\end{rmk}

We also recall two results which will be key to our arguments; we refer to \cref{sec_introduction} for further discussion.
\begin{thm}[{Tsukerman and Williams \cite[Theorem 7.1]{tsukerman_williams15}}]\label{decompositions_equality}
Let $J$ and $K$ be complementary subsets of $[n-1]$, and let $v\le w$, where $v\in W$ and $w\in W^J$. Then the cell $\cell[K]{v}{w}$ of $\PFl{K}{n}^{\ge 0}$ is contained in a single matroid stratum, which is uniquely determined by the interval $[v,w]$ modulo $W_J$.
\end{thm}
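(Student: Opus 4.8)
The plan is to reduce the statement, one Pl\"ucker degree at a time, to the case of Grassmannians, where the analogous fact is part of Postnikov's positroid decomposition. Every partial flag in $\cell[K]{v}{w}$ is the image under the projection \eqref{defn_tnn_Fl_surjections} of a complete flag in the Richardson cell $\cell{v}{w}\subseteq\Fl_n^{\ge 0}$, and for $k\in K$ its degree-$k$ Pl\"ucker coordinates $(\Delta_I)_{I\in\binom{[n]}{k}}$ depend only on its $k$-dimensional subspace, hence coincide with those of a point of $\Pi_k:=\pi_k(\cell{v}{w})\subseteq\Gr_{k,n}^{\ge 0}$, where $\pi_k$ denotes the projection to the Grassmannian. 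So it suffices to prove that, for each $k\in K$, the set $\Pi_k$ is contained in a single matroid stratum of $\Gr_{k,n}(\mathbb R)$, and to describe that stratum.

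For the containment, observe that $\Pi_k$ is a projected Richardson cell of $\Gr_{k,n}^{\ge 0}$, since it is the image of the totally nonnegative Richardson cell $\cell{v}{w}$ under the flag projection $\pi_k$. By the Grassmannian case of \cref{converse} we have $\Gr_{k,n}^{\ge 0}=\Gr_{k,n}^{\Delta\ge 0}$, and Postnikov's theorem that the matroid strata of $\Gr_{k,n}^{\Delta\ge 0}$ form a cell decomposition coinciding with the decomposition into projected Richardson cells then shows that $\Pi_k$ lies in a single matroid stratum, whose matroid I will call $M_k$; consequently $\cell[K]{v}{w}$ lies in the matroid stratum of $\PFl{K}{n}^{\ge 0}$ indexed by $(M_k)_{k\in K}$. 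To compute $M_k$, write $J_k:=[n-1]\setminus\{k\}$ and let $\sigma_I\in W^{J_k}$ be the minimal coset representative corresponding to $I\in\binom{[n]}{k}$. Then $M_k$ is the matroid of a generic point of the Richardson variety $R_{v,w}$ read off in degree $k$ (using that the totally nonnegative cell is Zariski-dense in $R_{v,w}$ and that $M_k$ is constant on $\Pi_k$), and by the classical computation of the matroid of a Grassmannian Richardson variety --- equivalently, by Rietsch's description of the $0$-dimensional projected Richardson cells in the relevant closure, namely the coordinate subspaces indexed by $\sigma_I$ with $v^{J_k}\le\sigma_I\le w^{J_k}$, together with the fact that a positroid is recovered from the coordinate subspaces lying in the closure of its cell --- one obtains that $M_k$ has basis set $\{I\in\binom{[n]}{k}:v^{J_k}\le\sigma_I\le w^{J_k}\}$. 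Since $W_J\subseteq W_{J_k}$, the elements $v^{J_k}$ and $w^{J_k}$ are determined by $v^J$ and $w$, which are respectively the minimum and maximum of $[v,w]$ modulo $W_J$ (the latter using \cref{parabolic_order} and $w\in W^J$); hence each $M_k$, and so the whole tuple $(M_k)_{k\in K}$, depends only on $[v,w]$ modulo $W_J$.

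The main difficulty is the description of $M_k$: it rests on the structure theory of projected Richardson varieties --- in particular Rietsch's parametrization of their totally nonnegative parts and the resulting closure order --- and on Postnikov's positroid decomposition, which together are what force the matroid of a cell to be governed by the $0$-dimensional cells in its closure. A more self-contained alternative would bypass the reduction to Grassmannians and argue directly from the Marsh--Rietsch parametrization of $\cell[K]{v}{w}$ by a positive orthant; there the crux becomes verifying that every flag minor pulls back to a subtraction-free expression in the parameters, so that its zero set on the cell is automatically either empty or all of the cell (which yields the containment in one stratum), and then reading off the stratum from the same parametrization.
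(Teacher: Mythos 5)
The paper does not actually prove this statement: it is imported verbatim from Tsukerman and Williams \cite[Theorem 7.1]{tsukerman_williams15}, which in turn rests on unpublished work of Marsh and Rietsch, so your argument has to stand on its own. As written it has one unjustified step and one outright false step.

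First, the step ``observe that $\pi_k(\cell{v}{w})$ is a projected Richardson cell of $\Gr_{k,n}^{\ge 0}$'' is not an observation. By \cref{defn_cell_decomposition}, the cells of $\Gr_{k,n}^{\ge 0}$ are the images of the cells $\cell{v'}{w'}$ with $w'\in W^{J_k}$, where $J_k=[n-1]\setminus\{k\}$; your $w$ lies in $W^J$ but in general not in $W^{J_k}$, so the claim is that the image of $\cell{v}{w}$ coincides with exactly one cell of the coarser decomposition rather than being a set meeting several cells (hence possibly several matroid strata). That compatibility of the totally nonnegative cell decompositions with projections $\PFl{K}{n}^{\ge 0}\twoheadrightarrow\Gr_{k,n}^{\ge 0}$ is a genuine theorem of the same Marsh--Rietsch flavor as the statement you are proving; without it, even the ``contained in a single matroid stratum'' half of the theorem is not established.

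Second, your description of $M_k$ is wrong. The bases of the positroid of the cell indexed by $(v,w)$ with $w\in W^{J_k}$ are $\{x^{J_k}:x\in[v,w]\}$ (this is what Rietsch's closure theorem, quoted in \cref{decompositions_remark}, combined with Postnikov's result yields), and this is in general a \emph{proper} subset of the Bruhat interval $\{I\in\binom{[n]}{k}:v^{J_k}\le\sigma_I\le w^{J_k}\}$: positroids are not Bruhat intervals in $W^{J_k}$. Concretely, take $n=3$, $k=1$, $v=s_2$, $w=s_2s_1\in W^{\{2\}}$. The cell $\cell[\{1\}]{s_2}{s_2s_1}$ of $\Gr_{1,3}^{\ge 0}$ contains the line spanned by $e_1+e_3$, whose matroid is $\{\{1\},\{3\}\}$, whereas your formula (here $v^{J_k}=e$, $w^{J_k}=s_2s_1$) predicts $\{\{1\},\{2\},\{3\}\}$. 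The error comes from conflating Richardson varieties \emph{inside} the Grassmannian (whose torus-fixed points do form an interval in $W^{J_k}$) with projected Richardson, i.e.\ positroid, varieties. With the correct description $M_k=\{(x^J)^{J_k}:x\in[v,w]\bmod W_J\}$ the ``uniquely determined by $[v,w]$ modulo $W_J$'' clause would indeed follow at once, since $x^{J_k}=(x^J)^{J_k}$; but proving that description is precisely the content of the Tsukerman--Williams/Marsh--Rietsch result. Your closing alternative (showing flag minors are subtraction-free in the Marsh--Rietsch parameters and reading off the stratum) is the standard route to it, but as written it is a plan rather than a proof.
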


Note that \cref{decompositions_equality} implies that the cell decomposition of $\PFl{K}{n}^{\ge 0}$ is a refinement of its matroid decomposition.

\begin{thm}[{Postnikov \cite[Theorem 3.8]{postnikov07}, \cite{rietsch09}}]\label{decompositions_grassmannian}
For $0 \le k \le n$, the cell decomposition of $\Gr_{k,n}^{\ge 0}$ coincides with the matroid decomposition.
\end{thm}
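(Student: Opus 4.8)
The plan is to reduce the assertion to a combinatorial injectivity statement about the symmetric group via \cref{decompositions_equality}, and then to resolve that statement by invoking the classification of positroids. Set $J := [n-1]\setminus\{k\}$, so that $W^J$ is identified with $\binom{[n]}{k}$ through $w \mapsto \{w(1),\dots,w(k)\}$, an identification that turns the Bruhat order on $W^J$ into the componentwise order on sorted $k$-element subsets. The cases $k=0$ and $k=n$ are trivial (both Grassmannians are points), so take $1 \le k \le n-1$. By \cref{decompositions_equality} the cell decomposition of $\Gr_{k,n}^{\ge 0} = \PFl{\{k\}}{n}^{\ge 0}$ refines the matroid decomposition, and the matroid stratum containing a cell $\cell[\{k\}]{v}{w}$ is determined by the interval $[v,w]$ modulo $W_J$; explicitly, the associated matroid is the image of $[v,w]$ in $\binom{[n]}{k}$, namely $M(v,w) := \{\{u(1),\dots,u(k)\} : v \le u \le w\}$, as follows from Rietsch's description of the $0$-dimensional cells in the closure of $\cell[\{k\}]{v}{w}$. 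So it suffices to prove that $(v,w) \mapsto M(v,w)$ is injective on pairs $v \le w$ with $v\in W$ and $w\in W^J$.

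First I would recover $w$ and the minimal coset representative $v^J$, which is the easy part. If $v \le u \le w$ then \cref{parabolic_order} gives $v^J \le u^J \le w$ in $W^J$, so under the identification above $\{u(1),\dots,u(k)\}$ lies between $\{v(1),\dots,v(k)\}$ and $\{w(1),\dots,w(k)\}$ in the componentwise order; since $v$ and $w$ themselves occur, $\{w(1),\dots,w(k)\}$ is the unique maximum of $M(v,w)$ and $\{v(1),\dots,v(k)\}$ (equivalently $v^J$) is its unique minimum. Thus $M(v,w)$ already pins down $w$ and $v^J$, and in particular $(v,w)$ is recovered whenever $v \in W^J$. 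What remains — and this is the essential point — is to recover $v$ itself when $v \notin W^J$, i.e.\ to show that two pairs $(v,w)$ and $(v',w)$ with $(v')^J = v^J$ but $v' \neq v$ (so that one of $\cell[\{k\}]{v}{w}$, $\cell[\{k\}]{v'}{w}$ lies in the closure of the other) nonetheless have different matroids — combinatorially, that $[v,w]$ and $[v',w]$ have different images in $W/W_J$.

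I expect this last step to be the main obstacle, and it is precisely where the hypothesis that $K$ is a singleton is indispensable: for general $K$ the analogous injectivity genuinely fails, as Tsukerman and Williams showed for $\PFl{1,3}{4}$. The route I would take is to pass through Postnikov's combinatorics of $\Gr_{k,n}^{\ge 0}$: the matroid $M(v,w)$ is a positroid of rank $k$ on $[n]$, such a positroid is encoded faithfully by its Grassmann necklace (equivalently by a decorated permutation, or a Le-diagram), and one can read $v$ and $w$ off from that data by explicit formulas; since the correspondence between positroids and this combinatorial data is a bijection, the desired injectivity follows. An alternative that sidesteps the explicit formulas is to prove independently that the number of pairs $(v,w)$ as above equals the number of positroids of rank $k$ on $[n]$ — combined with the refinement coming from \cref{decompositions_equality}, an equality of cardinalities forces each matroid stratum to consist of a single cell. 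In either approach the remaining work is routine bookkeeping with the Bruhat order and the parabolic factorization (\cref{parabolic_order}), while the one genuinely nontrivial input is the positroid classification, which is special to type $A$. Finally, I would flag that the whole argument implicitly uses $\Gr_{k,n}^{\ge 0} = \Gr_{k,n}^{\Delta \ge 0}$ (the Grassmannian case of \cref{converse}) in order to identify the matroid decomposition of $\Gr_{k,n}^{\ge 0}$ with the decomposition of $\Gr_{k,n}^{\Delta \ge 0}$ by Pl\"{u}cker vanishing, which is the setting in which Postnikov worked.
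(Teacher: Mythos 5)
First, a point of comparison: the paper does not prove this statement at all --- it is imported verbatim from Postnikov \cite[Theorem 3.8]{postnikov07} together with Rietsch's identification $\Gr_{k,n}^{\ge 0}=\Gr_{k,n}^{\Delta\ge 0}$ \cite{rietsch09} (see \cref{sec_history}), and is then used as a black box in the proof of \cref{decompositions}. So there is no in-paper argument to measure yours against; what can be assessed is whether your sketch amounts to an independent proof.

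Your reduction is sound and consistent with how the paper itself handles these objects: by \cref{decompositions_equality}, rephrased as in \cref{decompositions_remark} via Rietsch's description of the zero-dimensional cells in a cell closure and the Gelfand--Serganova moment polytope, the matroid of every point of $\cell[\{k\}]{v}{w}$ is $M(v,w)=\{\{u(1),\dots,u(k)\}:u\in[v,w]\}$, and recovering $w$ and $v^J$ as the componentwise maximum and minimum of $M(v,w)$ via \cref{parabolic_order} is correct. The gap is exactly where you locate it: recovering $v$ itself, and as written neither of your two resolutions closes it. The ``explicit formulas'' reading $(v,w)$ off the Grassmann necklace of $M(v,w)$ are not stated, and proving that the necklace of $M(v,w)$ is given by such formulas --- equivalently, that the map from pairs $(v,w)$ to positroids is injective --- is essentially the content of the theorem being proved; it is the Postnikov--Rietsch matching itself, or the identification of projected Richardson strata with positroid strata in \cite{knutson_lam_speyer13}. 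Likewise the counting alternative requires the nontrivial enumerative input that pairs $(v,w)$ with $v\le w$ and $w\in W^J$ are equinumerous with positroids of rank $k$ on $[n]$ (Le-diagrams, or decorated permutations); this is again a theorem from the same circle of literature rather than routine bookkeeping. So your proposal is an accurate roadmap of how the result is proved in the sources the paper cites --- and you rightly flag both the indispensability of $K$ being a singleton (compare \cref{equal_cells}) and the hidden use of the Grassmannian case of \cref{converse}, which the paper also notes in \cref{sec_history} --- but as a self-contained argument it defers its one essential step to the very results the theorem is quoting.
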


\begin{rmk}\label{decompositions_remark}
While it will be sufficient for our purposes to work with the combinatorial statement of \cref{decompositions_equality}, we mention that it has the following geometric interpretation; see \cite[Sections 6--7]{tsukerman_williams15} for further details. The moment polytope of $\PFl{K}{n}(\mathbb{C})$ is a convex polytope in $\mathbb{R}^n$ whose vertices are indexed by $W^J$, or equivalently, by generalized Pl\"{u}cker coordinates of $\PFl{K}{n}(\mathbb{C})$ (see \cref{generalized_plucker_remark}). The moment polytope of $V\in\PFl{K}{n}(\mathbb{C})$ is contained in the moment polytope of $\PFl{K}{n}(\mathbb{C})$, and its vertices correspond precisely to the generalized Pl\"{u}cker coordinates which are nonzero at $V$ \cite[Proposition 5.1]{gel'fand_serganova87}. On the other hand, the set $W^J$ also indexes the zero-dimensional cells of $\PFl{K}{n}^{\ge 0}$, i.e., the cells $\cell[K]{x}{x}$ for $x\in W^J$. If $V\in \cell[K]{v}{w}$, then the zero-dimensional cells in the closure of $\cell[K]{v}{w}$ are precisely $\cell[K]{x^J}{x^J}$ for $x\in [v,w]$ \cite[Theorem 6.1]{rietsch06b}. \cref{decompositions_equality} can be rephrased as saying that the vertices of the moment polytope of $V\in \cell[K]{v}{w}$ are indexed precisely by the zero-dimensional cells in the closure of $\cell[K]{v}{w}$. Implicit in this statement is the fact that the moment polytope of $V$ is equal to the moment polytope of $\cell[K]{v}{w}$, even though the torus orbit of $V$ may have dimension much less than that of $\cell[K]{v}{w}$. This moment polytope is called a {\itshape Bruhat interval polytope}, denoted\footnote{We caution that \cite{tsukerman_williams15} uses the superscript $J$, rather than $K$.} by $\bip[K]{v}{w}$. We will make a further comment about $\bip[K]{v}{w}$ in \cref{minkowski_sum}.
\end{rmk}

\subsection{Proof of the forward direction}\label{sec_decompositions_forward}
In this subsection, we give two proofs of the forward direction of \cref{decompositions}. The first proof uses \cref{converse}, while the second proof uses \cref{decompositions_equality}.

For the first proof, we will need the following result of Rietsch \cite{rietsch98}; see \cite[Corollary 6.16]{bloch_karp1} for a stronger result.
\begin{lem}[{Rietsch \cite[Lemma 5.2]{rietsch98}}]\label{positive_connected}
Let $K\subseteq [n-1]$. Then $\PFl{K}{n}^{\Delta >0}$ is connected.
\end{lem}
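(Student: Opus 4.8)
The plan is to prove that $\PFl{K}{n}^{\Delta>0}$ is path-connected by showing that every flag in it can be joined, by a path that stays inside $\PFl{K}{n}^{\Delta>0}$, to a point of one fixed connected family $\mathcal{G}\subseteq\PFl{K}{n}^{\Delta>0}$. Each such path will be produced by flowing the given flag under a one-parameter semigroup of totally positive matrices and passing to the limit.

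In detail, for $\varepsilon>0$ I would set $A_\varepsilon := \Diag{n, n-1, \dots, 1} + \varepsilon N$, where $N$ is the tridiagonal matrix with entries $1$ immediately above and below the diagonal and $0$ elsewhere. Then $A_\varepsilon$ is a symmetric Jacobi matrix with positive off-diagonal entries, so it has distinct eigenvalues $\mu_1 > \cdots > \mu_n$ with an orthonormal eigenbasis $w_1(\varepsilon), \dots, w_n(\varepsilon)$, and, as in the $\exp(tA)$ construction in the proof of \cref{identity_perturbation}, $\exp(tA_\varepsilon)\in\GL_n^{>0}$ for every $t>0$. Given $V = (V_k)_{k\in K}\in\PFl{K}{n}^{\Delta>0}$, \cref{matrix_action}\ref{matrix_action_plucker} shows that $t\mapsto\exp(tA_\varepsilon)\cdot V$ is a curve in $\PFl{K}{n}^{\Delta>0}$ for $t>0$ that equals $V$ at $t=0$; a standard power-iteration computation then shows that, as $t\to\infty$, it converges in $\PFl{K}{n}(\mathbb{R})$ to the flag $F(\varepsilon) := \big(\spn(w_1(\varepsilon), \dots, w_k(\varepsilon))\big)_{k\in K}$, provided that $V_k\cap\spn(w_{k+1}(\varepsilon), \dots, w_n(\varepsilon)) = 0$ for every $k\in K$. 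At $\varepsilon=0$ one has $w_j(0)=e_j$, so this last condition reads $\Delta_{[k]}(V_k)\neq 0$ for $k\in K$, which holds because all Pl\"{u}cker coordinates of $V$ are positive; as the condition is open and the eigenvectors depend continuously on $\varepsilon$ near $0$, I may fix $\varepsilon=\varepsilon(V)\in(0,1]$ for which it holds, obtaining a path in $\PFl{K}{n}^{\Delta>0}$ from $V$ to $F(\varepsilon(V))$.

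The step I expect to require the most care is checking that the limit flag $F(\varepsilon)$ really lies in $\PFl{K}{n}^{\Delta>0}$ — i.e.\ that all of its Pl\"{u}cker coordinates are positive — so that the path above, including its endpoint, stays inside $\PFl{K}{n}^{\Delta>0}$; note this does not follow merely from $F(\varepsilon)$ being a limit of points of $\PFl{K}{n}^{\Delta>0}$. Here I would use the following observation, valid for every $\varepsilon>0$: for $1\le k\le n-1$, the $k$-th exterior power $\wedge^k\exp(A_\varepsilon)$ is a $\binom{n}{k}\times\binom{n}{k}$ matrix whose entries are the $k\times k$ minors of the totally positive matrix $\exp(A_\varepsilon)$, hence is strictly positive entrywise. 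By the Perron--Frobenius theorem its largest eigenvalue $e^{\mu_1+\cdots+\mu_k}$ is simple with a strictly positive eigenvector, which must be $w_1(\varepsilon)\wedge\cdots\wedge w_k(\varepsilon)$; but the coordinates of this vector are exactly the Pl\"{u}cker coordinates of $\spn(w_1(\varepsilon), \dots, w_k(\varepsilon))$. Hence $\spn(w_1(\varepsilon), \dots, w_k(\varepsilon))\in\Gr_{k,n}^{\Delta>0}$ for all $k$, and in particular the Pl\"{u}cker coordinates of $F(\varepsilon)$ are all positive, so $F(\varepsilon)\in\PFl{K}{n}^{\Delta>0}$.

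To finish, since the eigenvalues of $A_\varepsilon$ are simple for every $\varepsilon>0$, the eigenvectors, and hence $F(\varepsilon)$, depend continuously on $\varepsilon\in(0,1]$, so $\mathcal{G} := \{F(\varepsilon) : \varepsilon\in(0,1]\}\subseteq\PFl{K}{n}^{\Delta>0}$ is connected. Every $V\in\PFl{K}{n}^{\Delta>0}$ is joined inside $\PFl{K}{n}^{\Delta>0}$ to the point $F(\varepsilon(V))\in\mathcal{G}$ by the path just constructed, so $\PFl{K}{n}^{\Delta>0}$ is a union of connected sets each meeting the connected set $\mathcal{G}$, and therefore is connected. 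The only routine points left to fill in are the power-iteration convergence $\exp(tA_\varepsilon)\cdot V_k\to\spn(w_1(\varepsilon),\dots,w_k(\varepsilon))$ under the stated non-degeneracy hypothesis and the continuous dependence of the eigenvectors on $\varepsilon$. (One could alternatively deduce, using \cref{converse_complete} and \eqref{defn_tnn_Fl_surjections}, that in fact $F(\varepsilon)\in\PFl{K}{n}^{>0}$, and conclude instead from the connectedness of $\PFl{K}{n}^{>0}$, the continuous image of the connected semigroup $\GL_n^{>0}$.)
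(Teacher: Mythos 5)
Your proof is correct, but note that the paper does not actually prove this lemma: it is imported from Rietsch's thesis (the citation in the statement) and is used only in the first proof of the forward direction of \cref{decompositions}, so there is no in-paper argument to compare against. What you give is a legitimate, self-contained substitute, built largely from ingredients the paper already has: the totally positive semigroup $\exp(tA_\varepsilon)$ is exactly the Jacobi-matrix construction cited in the proof of \cref{identity_perturbation}, the fact that the flow stays in $\PFl{K}{n}^{\Delta>0}$ is \cref{matrix_action}\ref{matrix_action_plucker}, and the one genuinely nontrivial point --- that the limit flag $F(\varepsilon)$ has all Pl\"{u}cker coordinates positive, which as you rightly stress does not follow merely from its being a limit of Pl\"{u}cker-positive points --- is the classical Gantmakher--Krein/Perron--Frobenius argument on compound matrices, correctly executed (the only cosmetic caveat is that $w_1(\varepsilon)\wedge\cdots\wedge w_k(\varepsilon)$ is the Perron vector only up to a nonzero scalar, which is all you need projectively). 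The transversality hypothesis $V_k\cap\spn(w_{k+1}(\varepsilon),\dots,w_n(\varepsilon))=0$ is verified correctly at $\varepsilon=0$ from $\Delta_{[k]}(V)>0$ and propagated by continuity of the simple spectral data, and the steps you defer --- power iteration on each Grassmannian under this hypothesis, and continuity of $F(\varepsilon)$ on $(0,1]$ --- are indeed routine. There is also no circularity: \cref{identity_perturbation}, \cref{matrix_action}, \cref{converse_complete}, and \eqref{defn_tnn_Fl_surjections} are established in the paper independently of \cref{positive_connected}, so your alternative ending (deducing $F(\varepsilon)\in\PFl{K}{n}^{>0}$ and concluding via connectedness of the continuous image of $\GL_n^{>0}$) is equally valid, and arguably cleaner since it dispenses with the family $\mathcal{G}$ by targeting the single connected set $\PFl{K}{n}^{>0}$.
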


\begin{proof}[Proof of the forward direction of \cref{decompositions}]
We prove the contrapositive. Suppose that $K$ does not consist of consecutive integers, so that by the implication \ref{converse_tp} $\Rightarrow$ \ref{converse_consecutive} of \cref{converse}, $\PFl{K}{n}^{>0}$ is strictly contained in $\PFl{K}{n}^{\Delta >0}$. By \cref{positive_connected}, $\PFl{K}{n}^{>0}$ is not closed in $\PFl{K}{n}^{\Delta >0}$. Hence there exists a point $V\in (\PFl{K}{n}^{\ge 0}\setminus\PFl{K}{n}^{>0})\cap\PFl{K}{n}^{\Delta >0}$. Then $V$ and the cell $\PFl{K}{n}^{>0}$ of $\PFl{K}{n}^{\ge 0}$ are contained in the same matroid stratum, namely, the one where all Pl\"{u}cker coordinates are nonzero.
\end{proof}

We now proceed to the second proof of the forward direction of \cref{decompositions}. It is based on the following lemma, which generalizes an example of Tsukerman and Williams \cite[Remark 7.3]{tsukerman_williams15}.
\begin{lem}\label{equal_cells}
Let $J := [2,n-2]$ and $K := \{1,n-1\}$, and let $w := (1 \hspace*{8pt} n)\in W^J$. Then for all $j\in J$, the intervals $[e,w]$ and $[s_j,w]$ are equal modulo $W_J$.
\end{lem}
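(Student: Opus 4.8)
The plan is to understand the interval $[e,w]$ and $[s_j, w]$ in $W = \mathfrak{S}_n$ explicitly, where $w = (1\ n)$ is the transposition, and to show that when we project to cosets modulo $W_J$ with $J = [2,n-2]$, the two intervals have the same image. Recall that $W_J$ is the subgroup fixing setwise $\{1\}$, $\{2,3,\dots,n-1\}$, and $\{n\}$, so $W^J$ consists of permutations $u$ with $u(2) < u(3) < \cdots < u(n-1)$; such a $u$ is determined by the ordered pair $(u(1), u(n))$, i.e.\ by which value lands in position $1$ and which in position $n$. First I would identify $w = (1\ n)$: its one-line notation is $n\, 2\, 3\, \cdots\, (n-1)\, 1$, which is already in $W^J$ (the middle entries $2,\dots,n-1$ are increasing), and a reduced word for it is $s_1 s_2 \cdots s_{n-1} s_{n-2} \cdots s_1$ of length $2n-3$. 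The lower interval $[e,w]$ is the set of subwords of this reduced word (up to commutation/braid moves), and I would describe it concretely: an element $v \le w$ is characterized by a well-known criterion for intervals below a transposition. Then $[s_j, w]$ is the subinterval of elements containing $s_j$ in every reduced word, i.e.\ $\{v \le w : v \ge s_j\}$.

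The key step is the coset computation. I would show two inclusions. For ``$[e,w]$ mod $W_J$ $\subseteq$ $[s_j,w]$ mod $W_J$'': take any $v \le w$; I claim its minimal coset representative $v^J \in W^J$ is either $e$ or already lies in $[s_j,w]$, or else can be replaced within its $W_J$-coset by something in $[s_j,w]$ that is still $\le w$. The point is that $w^J = w$, so by \cref{parabolic_order}, $v \le w$ forces $v^J \le w^J = w$; thus every coset in $[e,w]/W_J$ has a representative of the form $v^J$ with $v^J \le w$ and $v^J \in W^J$. Since $W^J$-elements are indexed by pairs $(a,b) = (u(1),u(n))$, and $w$ corresponds to $(n,1)$, the elements $u \in W^J$ with $u \le w$ are exactly those with $(u(1),u(n))$ in a suitable order ideal: I expect this to be $\{u : u(1) \ge u(n)\}$ together with the identity $(1,n)$, or some such explicit set, which I would pin down using the subword/Bruhat criterion. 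Then for the nontrivial cosets ($u \neq e$ in $W^J$ with $u \le w$), I would exhibit a reduced word for $u$ — or for a suitable coset-mate — that contains $s_j$, using the fact that moving a non-identity pair $(a,b)$ with $a > b$ from the ``base'' pair requires crossing strands through all middle positions, hence uses every $s_i$ with $1 \le i \le n-1$; in particular it uses $s_j$. For the reverse inclusion ``$[s_j,w]$ mod $W_J \subseteq [e,w]$ mod $W_J$'': this is immediate since $[s_j,w] \subseteq [e,w]$.

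So the real content is: (1) $w^J = w$, so both intervals, when reduced mod $W_J$, are governed by the poset $W^J \cap [e,w]$ via \cref{parabolic_order}; and (2) every element of $W^J \cap [e,w]$ other than $e$ already lies above $s_j$ (equivalently, uses $s_j$ in its reduced words), so that $[e,w] \cap W^J = \{e\} \cup ([s_j,w] \cap W^J)$ — and crucially the coset of $e$ also equals the coset of some element of $[s_j,w]$, namely pick any $v$ with $s_j \le v \le w$ and $v \in W_J\cdot e = W_J$; indeed $s_j \in W_J$ and $s_j \le w$, so the coset $W_J = eW_J$ does contain $s_j \in [s_j,w]$. That last observation closes the gap: the identity coset is hit by $[s_j,w]$ through $s_j$ itself.

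The main obstacle I anticipate is step (2): proving cleanly that a non-identity element $u \in W^J$ with $u \le (1\ n)$ necessarily satisfies $u \ge s_j$ for \emph{every} $j \in [2,n-2]$. The slick way is probably to show that every reduced word of such $u$ contains every simple reflection $s_1, \dots, s_{n-1}$. One route: if $u \in W^J$, $u \neq e$, $u \le (1\ n)$, then $u(1) > 1$ (else, combined with $u \in W^J$ and the Bruhat bound, $u$ collapses to $e$), so strand $1$ must travel from position $1$ rightward; similarly strand-$n$ behavior forces a long descent; analyzing the permutation matrix / the inversion set of $u$ shows $s_i \le u$ for all $i$, via the criterion $s_i \le u \iff u^{-1}(1) + \cdots$ — more precisely via the fact that $s_i$ appears in a reduced word for $u$ iff $u$ does not fix the standard flag step at $i$ in a certain sense, i.e.\ iff $\{u(1),\dots,u(i)\} \neq \{1,\dots,i\}$. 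I would verify directly that for $u \in W^J$, $u \neq e$, $u \le (1\ n)$, and any $1 \le i \le n-1$, the set $\{u(1),\dots,u(i)\}$ is not $\{1,\dots,i\}$ — which follows because such $u$ has $u(1) = n$ or at least $u(1)$ large and $u(n)$ small with the middle entries increasing, so no initial segment of positions carries exactly $\{1,\dots,i\}$. Making this casework airtight — especially handling the possible shapes of $u \in W^J \cap [e,(1\ n)]$ — is where the care will be needed, but it is elementary symmetric-group combinatorics once the shape of that interval is written down.
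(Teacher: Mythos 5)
Your overall framework (pass to minimal coset representatives via \cref{parabolic_order}, then show every coset of $[e,w]$ modulo $W_J$ is hit by $[s_j,w]$, the reverse inclusion being trivial) is fine, but the key claim you commit to is false. You assert that every non-identity $u\in W^J$ with $u\le w=(1\;n)$ satisfies $s_j\le u$ for all $j\in[2,n-2]$, equivalently that $\{u(1),\dots,u(i)\}\ne\{1,\dots,i\}$ for all $i$. Counterexamples are immediate: $u=s_{n-1}$ has one-line notation $1\,2\cdots(n-2)\,n\,(n-1)$, so $u\in W^J$, $u\le w$ (it is a subword of $s_1\cdots s_{n-2}s_{n-1}s_{n-2}\cdots s_1$), yet $s_j\not\le s_{n-1}$ for every $j\le n-2$; likewise $u=s_1$ and, say for $n=5$, $u=s_2s_1=31245\le 52341$ with $s_3\not\le u$. (Your guess that $W^J\cap[e,w]$ consists of $e$ together with the pairs $u(1)\ge u(n)$ is also wrong, as these examples show, so the proposed casework on ``shapes'' would not rescue the claim.) Concretely, for $n=4$, $j=2$: $s_3=1243\in W^J$, $s_3\le 4231=w$, but $s_2\not\le s_3$, so the identity $[e,w]\cap W^J=\{e\}\cup([s_j,w]\cap W^J)$ you rely on fails.

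The lemma is nonetheless true, and the fallback you mention only in passing --- replacing $u$ by a coset-mate in $uW_J$ that lies in $[s_j,w]$ --- is in fact the entire content, and you give no mechanism for producing such a coset-mate in the problematic cases (your only mechanism is the false claim). In the example above the coset of $s_3$ is hit by $s_3s_2\in[s_2,w]$, not by $s_3$ itself. The paper's proof supplies exactly this mechanism at the level of arbitrary $x\in[e,w]$ (not just minimal representatives): take a reduced subword $\mathbf{x}$ of $\mathbf{w}=s_1\cdots s_{n-2}s_{n-1}s_{n-2}\cdots s_1$ for $x$; if it avoids $s_j$, normalize it to avoid the second occurrence of $s_{j-1}$, then adjoin the second occurrence of $s_j$, commute it to the end to see the new word represents $xs_j$, and use \cref{word_ending} to see it is reduced; this yields $y=xs_j\le w$ with $s_j\le y$ and $y\equiv x$ modulo $W_J$. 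To make your write-up correct you would need to prove some such statement (for every $u\in W^J\cap[e,w]$ with $s_j\not\le u$, one has $s_j\le us_j\le w$); the nontrivial part is $us_j\le w$, which is precisely what the subword-insertion argument establishes and which your proposal leaves unaddressed.
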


\begin{proof}
Consider the reduced word
$$
\mathbf{w} := s_1s_2 \cdots s_{n-2}s_{n-1}s_{n-2} \cdots s_2s_1
$$
for $w$. In particular, we see that for $j\in J$, we indeed have $s_j \le w$. Note that $[s_j,w]\subseteq [e,w]$. Conversely, we must show that given $x\in [e,w]$, there exists $y\in [s_j,w]$ such that $x$ and $y$ are equal modulo $W_J$. Take a subword $\mathbf{x}$ of $\mathbf{w}$ which is a reduced word for $x$. We will construct a reduced subword $\mathbf{y}$ of $\mathbf{w}$ which contains $s_j$, and such that the associated permutation $y$ is equal to $x$ modulo $W_J$.

If $\mathbf{x}$ contains $s_j$, we set $\mathbf{y} := \mathbf{x}$. Now suppose that $\mathbf{x}$ does not contain $s_j$. Note that $\mathbf{w}$ contains two occurrences of $s_{j-1}$. Since $\mathbf{x}$ does not contain $s_j$, it does not contain both occurrences of $s_{j-1}$, since otherwise we could use moves \ref{move_commutation} to obtain $s_{j-1}^2$, contradicting the fact that $\mathbf{x}$ is reduced. Similarly, if $\mathbf{x}$ contains the second occurrence of $s_{j-1}$ in $\mathbf{w}$, we may replace it with the first occurrence of $s_{j-1}$. Now let $\mathbf{y}$ be obtained from $\mathbf{x}$ by including the second occurrence of $s_j$ in $\mathbf{w}$. Since $\mathbf{y}$ does not contain the second occurrence of $s_{j-1}$ in $\mathbf{w}$, we can use \ref{move_commutation} to move $s_j$ to the end of $\mathbf{y}$. That is, $y = xs_j$. This implies that $\mathbf{y}$ is reduced; otherwise, by \cref{word_ending}, $x$ would have a reduced word ending in $s_j$, whereas $\mathbf{x}$ (and hence every reduced word for $x$) does not contain $s_j$. Since $s_j\in W_J$, we see that $y$ equals $x$ modulo $W_J$.
\end{proof}

\begin{proof}[Proof of the forward direction of \cref{decompositions}]
We prove the contrapositive. Suppose that $K$ does not consist of consecutive integers, so that there exist consecutive elements $k<l$ of $K$ with $l-k \ge 2$. Let $w := (k \hspace*{8pt} l)\in W^J$. Then by \cref{equal_cells}, for all $j\in [k+1,l-1]$, the intervals $[e,w]$ and $[s_j,w]$ are equal modulo $W_J$. Hence by \cref{decompositions_equality}, the cells $\cell[K]{e}{w}$ and $\cell[K]{s_j}{w}$ of $\PFl{K}{n}^{\ge 0}$ are contained in the same matroid stratum.
\end{proof}

\subsection{Proof of the reverse direction}\label{sec_decompositions_reverse}
In this subsection, we prove the reverse direction of \cref{decompositions}. We first establish two preliminary results, which will allow us to reduce the proof to \cref{decompositions_grassmannian}.
\begin{lem}\label{demazure_reduction}
Let $v\le w$ in $W$, and let $J\subseteq [n-1]$. Set $v' := v\triangleleft w_J^{-1}\in W$ and $w' := w^J\in W^J$. Then $v' \le w'$, and the intervals $[v,w]$ and $[v',w']$ are equal modulo $W_J$.
\end{lem}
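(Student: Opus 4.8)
The plan is to deduce the whole statement from one identity, $w \triangleleft w_J^{-1} = w^J$, together with the monotonicity of the Demazure products recorded in \cref{demazure_properites}. To prove that identity, recall from \cref{defn_demazure} that $w \triangleleft w_J^{-1} = \min\{wx : x \le w_J^{-1}\}$. Since $w_J^{-1}\in W_J$, any $x$ with $x\le w_J^{-1}$ has a reduced word using only the generators $s_j$ with $j\in J$, so $x\in W_J$; thus $wx = w^J(w_Jx)$ with $w_Jx\in W_J$. Because the parabolic factorization is length-additive (\cref{defn_parabolic}), the minimal coset representative of $wx$ is $w^J$ and satisfies $w^J\le wx$; as moreover $ww_J^{-1} = w^J$ itself, $w^J$ is the minimum of $\{wx : x\le w_J^{-1}\}$, giving $w\triangleleft w_J^{-1}=w^J=w'$. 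The inequality $v'\le w'$ is then immediate: $v\le w$ and \cref{demazure_properites} give $v' = v\triangleleft w_J^{-1}\le w\triangleleft w_J^{-1}=w'$.

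Next I would show that the images of $[v,w]$ and $[v',w']$ in $W/W_J$ coincide, by proving the two inclusions. For the first, given $x\in[v,w]$, set $y := x\triangleleft w_J^{-1}$; then $y$ lies in the coset $xW_J$ (the minimum defining $\triangleleft$ is attained at some $z\le w_J^{-1}\in W_J$), and applying \cref{demazure_properites} to $v\le x$ and to $x\le w$, together with the identity above, yields $v' = v\triangleleft w_J^{-1}\le y$ and $y = x\triangleleft w_J^{-1}\le w\triangleleft w_J^{-1}=w'$; hence $y\in[v',w']$ represents the coset $xW_J$. For the reverse inclusion, given $y\in[v',w']$, set $x := y\ast w_J$, which likewise lies in $yW_J$. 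Monotonicity of $\ast$ gives $v'\ast w_J\le x$ and $x = y\ast w_J\le w^J\ast w_J$, and since $w^Jw_J$ is length-additive one has $w^J\ast w_J = w^Jw_J = w$, so $x\le w$. It remains to check $v\le v'\ast w_J$: writing $v' = v\triangleleft w_J^{-1} = vz$ with $z\le w_J^{-1}$ (the minimum being attained), we get $z^{-1}\le w_J$ and hence $v = v'z^{-1}\le\max\{v'z' : z'\le w_J\} = v'\ast w_J$. Thus $v\le x\le w$, and $x$ represents the coset $yW_J$.

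The one thing to be careful about is the asymmetry between the two inclusions: moving the top endpoint of the interval down into $W^J$ is governed by the downwards product $\triangleleft$, whereas lifting coset representatives back up is governed by the ordinary Demazure product $\ast$; one cannot simply factor the interval, since $v\le w$ need not imply $v_J\le w_J$. Getting this pairing of operations right (and checking that $y\ast w_J$, resp.\ $x\triangleleft w_J^{-1}$, really stays in the same coset and lands in the correct interval) is the substance of the proof. Beyond \cref{demazure_properites}, the argument uses only the length-additivity of the parabolic factorization and the fact that the Demazure product of a length-additive product is the ordinary product, both standard; in particular it needs nothing about reduced-word rewriting beyond the definitions.
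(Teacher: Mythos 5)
Your proof is correct and takes essentially the same route as the paper: establish $w' = w\triangleleft w_J^{-1}$ and $w = w'\ast w_J$ from the length-additivity of $w = w^Jw_J$, then transport the two intervals into each other via $x\mapsto x\triangleleft w_J^{-1}$ and $x'\mapsto x'\ast w_J$, using the monotonicity in \cref{demazure_properites} and the fact that the multipliers lie in $W_J$ so cosets are preserved. The only difference is cosmetic: you spell out the inequality $v\le v'\ast w_J$ (writing $v'=vz$ with $z\le w_J^{-1}$, hence $z^{-1}\le w_J$), a step the paper leaves implicit in its chain of inequalities.
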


\begin{proof}
Note that since the factorization $w = w^Jw_J$ is length-additive, we have $w = w'\ast w_J$ and $w' = w\triangleleft w_J^{-1}$. In particular, $v' \le w'$ by \cref{demazure_properites}.

First we show that given $x'\in [v',w']$, there exists $x\in [v,w]$ such that $x$ and $x'$ are equal modulo $W_J$. We set $x := x' \ast w_J$. Since $w_J\in W_J$, we see that $x$ equals $x'$ modulo $W_J$. Also, by \cref{demazure_properites}, we have
$$
v \le v'\ast w_J \le x'\ast w_J \le w'\ast w_J = w,
$$
so $x\in [v,w]$.

Conversely, we show that given $x\in [v,w]$, there exists $x'\in [v',w']$ such that $x$ and $x'$ are equal modulo $W_J$. We set $x' := x\triangleleft w_J^{-1}$. Since $w_J^{-1}\in W_J$, we see that $x'$ equals $x$ modulo $W_J$. Also, $x'\in [v',w']$ by \cref{demazure_properites}.
\end{proof}

\begin{eg}\label{eg_demazure_reduction}
As in \cref{eg_parabolic_quotient}, we let $w := s_1s_3s_4s_3s_2s_1s_5s_6s_5$ and $J := \{1,2,4,6\}$. Take $v := s_1s_4s_3s_2s_1s_5$, so that $v \le w$. We set
$$
v' := v\triangleleft w_J^{-1} = s_1s_4s_3s_2s_1s_5 \triangleleft s_6s_1s_2s_1 = s_4s_3s_5
$$
and $w' := w^J = s_3s_4s_3s_6s_5$. Then \cref{demazure_reduction} asserts that the intervals $[v,w]$ and $[v',w']$ are equal modulo $W_J$. Indeed, we can verify that both intervals modulo $W_J$ are equal to
$$
\{s_4s_3s_5, s_3s_4s_3s_5, s_4s_3s_6s_5, s_3s_4s_3s_6s_5\},
$$
where above, we represent equivalence classes by elements of $W^J$.
\end{eg}

\begin{lem}\label{reduction_to_grassmannian}
Let $J$ and $K$ be complementary subsets of $[n-1]$, and let $v_1, v_2\le w$, where $v_1, v_2\in W$ and $w\in W^J$. Suppose that the intervals $[v_1,w]$ and $[v_2,w]$ are equal modulo $W_J$. Then $v_1(i) = v_2(i)$ for all $i \le \min(K)$ and all $i \ge \max(K) + 1$.
\end{lem}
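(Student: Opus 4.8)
Set $a := \min(K)$ and $b := \max(K)$. Since $[1,a-1]\subseteq J$ and $[b+1,n-1]\subseteq J$, the permutation $w$ is increasing on $[1,a]$ and on $[b+1,n]$; in particular $w(a)\le n$ forces $w(1)\le n-a+1$, and $v_i\le w$ forces $v_i(1)\le w(1)$. The plan is first to show that $v_1$ and $v_2$ lie in a common coset of $W_J$. For any $J'\subseteq[n-1]$ and any $x\in[v_i,w]$ we have $v_i^{J'}\le x^{J'}$ by \cref{parabolic_order}, while $v_i$ itself projects to $v_i^{J'}$; hence $v_i^{J'}$ is the Bruhat-minimum of $\{x^{J'}:x\in[v_i,w]\}$. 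Taking $J'=J$ and using the hypothesis gives $v_1^J=v_2^J$, so $v_1^{-1}v_2\in W_J$ and, in particular, $v_1$ and $v_2$ have the same image on each $J$-block. It then remains to upgrade this to \emph{pointwise} agreement on $[1,a]$ and on $[b+1,n]$.

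Next I would reduce to a single block. The automorphism $x\mapsto w_0xw_0$ of $W$ preserves the Bruhat order, carries $W_J$ to $W_{\{n-j:j\in J\}}$ and $W^J$ to the corresponding minimal-coset-representative set, and satisfies $(w_0xw_0)(i)=n+1-x(n+1-i)$; applying it to $(v_1,v_2,w)$ interchanges the assertion ``$v_1(i)=v_2(i)$ for $i\ge b+1$'' with the assertion ``$v_1(i)=v_2(i)$ for $i\le\min$ of the reflected index set''. So it suffices to prove $v_1(i)=v_2(i)$ for all $i\le a$. If $a=1$ this is immediate from $v_1^J=v_2^J$, so assume $a\ge 2$ and, for contradiction, that $v_1|_{[1,a]}\ne v_2|_{[1,a]}$. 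Since these restrictions have the same image, there is a least $p$ with $v_1(p)\ne v_2(p)$, necessarily $1\le p\le a-1$ (if $p=a$ the common image would force $v_1(a)=v_2(a)$); after interchanging $v_1$ and $v_2$ we may assume $v_1(p)<v_2(p)$.

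The heart of the argument is to exhibit a coset $C$ of $W_J$ whose image under $x\mapsto x^J$ lies in $\{x^J:x\in[v_1,w]\}$ but not in $\{x^J:x\in[v_2,w]\}$, contradicting the hypothesis. I would take $C$ with image set $D:=\{v_1(1),\dots,v_1(p)\}\cup\{w(p+1),\dots,w(a)\}$ on the block $[1,a]$; these two sets are disjoint of sizes $p$ and $a-p$, since $v_1\le w$ gives $\max\{v_1(1),\dots,v_1(p)\}\le w(p)<w(p+1)$. All checks use Ehresmann's tableau criterion for the Bruhat order (\cite{bjorner_brenti05}): $x\le y$ iff for every $j$ the increasing rearrangement of $\{x(1),\dots,x(j)\}$ is componentwise $\le$ that of $\{y(1),\dots,y(j)\}$. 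Because for $g$ in a fixed coset of $W_J$ the sets $\{g(1),\dots,g(k)\}$ at block boundaries $k$ are constant, the conditions $v_i\le g$ and $g\le w$ decouple into independent conditions on the individual $J$-blocks. On the block $[1,a]$ one checks that (i) the flag $\{v_1(1)\}\subset\cdots\subset\{v_1(1),\dots,v_1(p)\}\subset\{v_1(1),\dots,v_1(p),w(p+1)\}\subset\cdots\subset D$ is sandwiched (componentwise) between the flags of $v_1$ and of $w$, using $v_1\le w$ and the monotonicity of $w$ on $[1,a]$; and (ii) no flag through $D$ is sandwiched between $v_2$ and $w$ --- writing $m:=\#\{j<p:v_1(j)>v_2(p)\}$, the set $D$ has only $p-1-m$ values below $v_2(p)$, so the $p$-th set of any such flag must contain at least $m+1$ values exceeding $v_2(p)$, of which at most $m$ are among the $v_1$-values of $D$; hence it contains some $w(t_0)$ with $p<t_0\le a$, which then lies in the $(t_0-1)$-st set, whose maximum is thereby $\ge w(t_0)>w(t_0-1)$ --- violating the bound at step $t_0-1$. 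Thus $C\notin\{x^J:x\in[v_2,w]\}$ outright.

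The main obstacle I anticipate is completing $D$ to an actual coset $C\in W/W_J$ that meets $[v_1,w]$: one must choose the image sets of $C$ on the remaining $J$-blocks so that the block-boundary inequalities (increasing rearrangement of $\{g(1),\dots,g(k)\}$ componentwise $\le$ that of $\{w(1),\dots,w(k)\}$) persist and so that on each of those blocks a valid flag for $v_1$ still exists --- no condition involving $v_2$ is needed there, by (ii), so once this is arranged we get $C\in\{x^J:x\in[v_1,w]\}\setminus\{x^J:x\in[v_2,w]\}$. I expect this to be routine but bookkeeping-heavy: the idea is to redistribute the values $\{w(1),\dots,w(p)\}$, which $D$ displaced from $w$'s first block, into the later blocks while staying below $w$, exploiting the slack from $v_1\le w$ and the monotonicity of $w$ on the last block $[b+1,n]$, and then to verify the remaining tableau inequalities block by block. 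The conceptual content is entirely in the previous paragraph; this last step is where the technical care is concentrated.
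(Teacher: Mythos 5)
Your overall route is genuinely different from the paper's: the paper proves this lemma by projecting to a single Grassmannian quotient via \cref{demazure_reduction} and then invoking \cref{decompositions_equality} together with Postnikov's \cref{decompositions_grassmannian} to conclude $v_1'=v_2'$, whereas you attempt a direct combinatorial argument with Ehresmann's tableau criterion. Several of your steps are fine: $v_1^J=v_2^J$ (via \cref{parabolic_order}), the $w_0$-symmetry reduction to the first block, the sandwich property of the flag through $D$ inside the block $[1,a]$, and your exclusion claim that no $x$ with $\{x(1),\dots,x(a)\}=D$ satisfies $v_2\le x\le w$ (your count ``$D$ has $p-1-m$ values below $v_2(p)$'' is off by one, since $v_1(p)<v_2(p)$ as well, but the needed conclusion --- that the $p$-th prefix of $x$ contains at least $m+1$ values exceeding $v_2(p)$ --- follows correctly from the tableau inequality for $v_2\le x$ at $j=p$, and the contradiction at step $t_0-1$ using the monotonicity of $w$ on $[1,a]$ is valid).

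The genuine gap is the step you defer as ``routine but bookkeeping-heavy'': producing \emph{some} $x\in[v_1,w]$ with $\{x(1),\dots,x(a)\}=D$. This is not bookkeeping; it is where the entire difficulty of the lemma is concentrated. What you need is that the coset of the maximal parabolic $W_{[n-1]\setminus\{a\}}$ determined by the $a$-set $D$ lies in the image of the Bruhat interval $[v_1,w]$ under the quotient map, and such images are subtle: they are in general strictly smaller than the interval between the projections. For example, with $n=3$, $J=\{2\}$, $v=132$, $w=312$, the image of $[v,w]$ in $W/W_J$ misses the coset of $213$ even though $v^J=e\le 213\le w$ and $213\in W^J$ (one checks $132\not\le 213$ and $231\not\le 312$). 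So after fixing the first block to be $D$ you must actually construct a nested chain $D=X_a\subset X_{a+1}\subset\cdots$ whose sorted prefixes dominate those of $v_1$ and are dominated by those of $w$, reinserting the displaced values $\{w(1),\dots,w(p)\}\setminus\{v_1(1),\dots,v_1(p)\}$; nothing you wrote shows this is always possible, and the choices at different levels interact (small values of $w$ just after position $a$ can force the chain, and one must then check compatibility with $v_1$). Note also that if this completion step really were routine, your argument would yield an elementary proof of the lemma that bypasses \cref{decompositions_grassmannian} entirely --- and hence, specialized to $K=\{k\}$, an elementary combinatorial proof of the coincidence of the cell and matroid decompositions of $\Gr_{k,n}^{\ge 0}$ given \cref{decompositions_equality} --- precisely the input the paper singles out as the deep, type-$A$-specific ingredient. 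Your existence claim may well be true, but as it stands the proof is incomplete at its crucial point.
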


\begin{proof}
We prove the statement for $i \le \min(K)$; the statement for $i \ge \max(K) + 1$ follows by symmetry. Set $k := \min(K)$ and $J' := [n-1]\setminus\{k\}$, and note that $J \subseteq J'$. As in \cref{demazure_reduction}, we define $v_1' := v_1\triangleleft w_{J'}^{-1}$, $v_2' := v_2\triangleleft w_{J'}^{-1}$, and $w' := w^{J'}$, so that the intervals $[v_1,w]$, $[v_2,w]$, $[v_1',w']$, and $[v_2',w']$ are all equal modulo $W_{J'}$. By \cref{decompositions_grassmannian} (using \cref{decompositions_equality}), we obtain $v_1' = v_2'$. Now since $w\in W^J$, we have $w(1) < \cdots < w(k)$, so $w_{J'}$ is contained in the parabolic subgroup $W_{[k+1,n-1]}$. Hence $v_1(i) = v_1'(i) = v_2'(i) = v_2(i)$ for all $i \le k$.
\end{proof}

\begin{proof}[Proof of the reverse direction of \cref{decompositions}]
Suppose that $K = [k,l]$. Let $v_1 \le w_1$ and $v_2 \le w_2$, where $v_1,v_2\in W$ and $w_1,w_2\in W^J$. By \cref{decompositions_equality}, it suffices to show that if the intervals $[v_1,w_1]$ and $[v_2,w_2]$ are equal modulo $W_J$, then $v_1=v_2$ and $w_1=w_2$. To this end, we regard $[v_1,w_1]$ and $[v_2,w_2]$ modulo $W_J$ as a subset of $W^J$; by \cref{parabolic_order}, this subset has minimum $v_1^J = v_2^J$ and maximum $w_1 = w_2$. In particular, $v_1(i) = v_2(i)$ for all $i\in [k+1,l]$. Therefore it remains to show that $v_1(i) = v_2(i)$ for all $i \le k$ and all $i \ge l+1$. This follows from \cref{reduction_to_grassmannian}.
\end{proof}

\begin{eg}\label{eg_decompositions_reverse}
We show how the argument above can fail when $K$ is {\itshape not} an interval of integers. Take $n := 4$, $J := \{2\}$, $K := \{1,3\}$, and 
$$
v_1 := 1234 = e, \quad v_2 := 1324 = s_2, \quad w := 4231 = s_1s_2s_3s_2s_1.
$$
By \cref{equal_cells} and \cref{decompositions_equality}, $\cell[K]{v_1}{w}$ and $\cell[K]{v_2}{w}$ are contained in the same matroid stratum. In agreement with \cref{reduction_to_grassmannian}, we have $v_1(i) = v_2(i)$ for all $i \le 1$ and all $i\ge 4$. Also, we have $v_1^J = v_2^J = e$, but this does not imply that $v_1 = v_2$.
\end{eg}

\begin{rmk}\label{minkowski_sum}
Recall the Bruhat interval polytopes $\bip[K]{v}{w}$ discussed in \cref{decompositions_remark}. It follows from the theory of Coxeter matroids (namely, \cite[Corollary 1.13.5]{borovik_gelfand_white03}), along with a result of Tsukerman and Williams \cite[Corollary 7.14]{tsukerman_williams15} (cf.\ \cite[Preface]{borovik_gelfand_white03}, \cite[Theorem 6.3]{caselli_d'adderio_marietti21}), that every Bruhat interval polytope for $\PFl{K}{n}(\mathbb{R})$ can be expressed as the Minkowski sum over $k\in K$ of a Bruhat interval polytope for $\Gr_{k,n}(\mathbb{R})$. \cref{demazure_reduction} allows us to write this Minkowski sum explicitly. Namely, for $v\le w$ with $v\in W$ and $w\in W^J$, we have
\begin{align}\label{minkowski_sum_equation}
\bip[K]{v}{w} = \sum_{k\in K}\bip[\{k\}]{v\hspace*{1pt}\triangleleft\hspace*{1pt} w_{[n-1]\setminus \{k\}}^{-1}}{\hspace*{1pt}w^{[n-1]\setminus \{k\}}}.
\end{align}
We point out that the Bruhat interval polytopes for $\Gr_{k,n}(\mathbb{R})$ are known as {\itshape positroid polytopes} \cite{ardila_rincon_williams16}; see \cite[Proposition 2.8]{tsukerman_williams15} for how to formulate \eqref{minkowski_sum_equation} in terms of positroids.

As an illustration of \eqref{minkowski_sum_equation}, we adopt the setup of \cref{eg_decompositions_reverse}, with $v = v_1$. Then \eqref{minkowski_sum_equation} gives
$$
\bip[\{1,3\}]{e}{s_1s_2s_3s_2s_1} = \bip[\{1\}]{e\hspace*{1pt}\triangleleft\hspace*{1pt} (s_2s_3)^{-1}}{s_3s_2s_1} + \bip[\{3\}]{e\hspace*{1pt}\triangleleft\hspace*{1pt} (s_2s_1)^{-1}}{s_1s_2s_3},
$$
or equivalently,
$$
\bip[\{1,3\}]{1234}{4231} = \bip[\{1\}]{1234}{4123} + \bip[\{3\}]{1234}{2341}.
$$
Note that if we had instead taken $v = v_2$, we would have obtained the same Minkowski sum decomposition for $\bip[\{1,3\}]{1324}{4231}$. Indeed, \cref{equal_cells} implies that $\bip[\{1,3\}]{1234}{4231} = \bip[\{1,3\}]{1324}{4231}$.
\end{rmk}

\bibliographystyle{alpha}
\bibliography{ref}

\end{document}